\documentclass[12pt]{amsart}
\usepackage{latexsym}
\usepackage{amssymb} 
\usepackage{mathrsfs}
\usepackage{amsmath}
\usepackage{latexsym}
\usepackage{delarray}
\usepackage{amssymb,amsmath,amsfonts,amsthm,
mathrsfs}

\setlength{\textwidth}{15.2cm}
\setlength{\textheight}{22.7cm}
\setlength{\topmargin}{0mm}
\setlength{\oddsidemargin}{3mm}
\setlength{\evensidemargin}{3mm}
\setlength{\footskip}{1cm}



\numberwithin{equation}{section}
\theoremstyle{plain}
\newtheorem{thm}{Theorem}[section]
\newtheorem{lem}[thm]{Lemma}
\newtheorem{proposition}[thm]{Proposition}

\newtheorem{remark}[thm]{Remark}

\newcommand{\irm}{{\rm i}}

\newcommand{\wt}[1]{\widetilde{#1}}


\renewcommand{\S}{\mathscr{S}}

\newcommand{\mb}[1]{\ensuremath{\mathbb{#1}}}
\newcommand{\N}{\mb{N}}

\newcommand{\R}{\mb{R}}
\newcommand{\C}{\mb{C}}





\newcommand{\lara}[1]{\langle #1 \rangle}

\newfont{\bigmath}{cmr12 at 13pt}

\newfont{\grecomath}{cmmi12 at 15pt}


\newcommand{\esp}{\mathrm{e}}



\newcommand{\beq}{\begin{equation}}
\newcommand{\eeq}{\end{equation}}



\newcommand{\eps}{\varepsilon}

\renewcommand{\Im}{\ensuremath{\mathrm{Im}}}



\title[Well-posedness of non-homogeneous weakly hyperbolic equations]{
Weakly hyperbolic equations with non-analytic 
coefficients and lower order terms
}

\author[Claudia Garetto]{Claudia Garetto}
\address{
  Claudia Garetto:
  \endgraf
  Department of Mathematics
  \endgraf
  Imperial College London
  \endgraf
  180 Queen's Gate, London SW7 2AZ
  \endgraf
  United Kingdom
  \endgraf
  {\it E-mail address} {\rm c.garetto@imperial.ac.uk}
  }
\author[Michael Ruzhansky]{Michael Ruzhansky}
\address{
  Michael Ruzhansky:
  \endgraf
  Department of Mathematics
  \endgraf
  Imperial College London
  \endgraf
  180 Queen's Gate, London SW7 2AZ
  \endgraf
  United Kingdom
  \endgraf
  {\it E-mail address} {\rm m.ruzhansky@imperial.ac.uk}
  }

\thanks{
The first author was supported by
the Imperial College Junior Research Fellowship.
The second author was supported by the
EPSRC Leadership Fellowship EP/G007233/1.
}
\date{}

\subjclass[2010]{Primary 35G10; 35L30; Secondary 46F05;}
\keywords{Hyperbolic equations, Gevrey spaces, ultradistributions.}

\begin{document}

\maketitle

\begin{abstract}
In this paper we consider weakly hyperbolic equations of higher orders
in arbitrary dimensions with time-dependent coefficients and lower order terms.
We prove the
Gevrey well-posedness of the Cauchy problem
under $C^k$-regularity of coefficients of the principal
part and natural Levi conditions on lower order
terms which may be only continuous.
In the case of analytic coefficients in the principal part
we establish the $C^\infty$ well-posedness. The proofs are based on using
the quasi-symmetriser for the corresponding system. The main novelty
compared to the existing literature is the possibility
to include lower order terms to the equation
(which have been untreatable until now in these problems)
as well as considering
any space dimensions. We also give results on the
ultradistributional and distributional well-posedness of the problem,
and we look at new effects for equations with discontinuous
lower order terms.
\end{abstract}

\section{Introduction}
In this paper we study the well-posedness of the weakly hyperbolic Cauchy problem
\beq
\label{CP}
\left\{
\begin{array}{cc}
D^m_t u+\sum_{j=0}^{m-1} A_{m-j}(t,D_x)D_t^j u=0,&\quad (t,x)\in[0,T]\times\R^n,\\
D^{l-1}_t u(0,x)=g_{l}(x),&\quad l=1,...,m,
\end{array}
\right.
\eeq
where each $A_{m-j}(t,D_x)$ is a differential operator of order $m-j$ with continuous coefficients  depending only on $t$. Later we will also relax the continuity
assumption replacing it by the boundedness.
As usual, $D_t=\frac{1}{\irm}\partial_t$ and $D_x=\frac{1}{\irm}\partial_x$. Let $A_{(m-j)}$ denote the principal part of the operator $A_{m-j}$ and let $\lambda_l(t,\xi)$, $l=1,...,m$, be the real-valued roots of the characteristic polynomial which we write as
\[
\tau^m+\sum_{j=0}^{m-1}A_{(m-j)}(t,\xi)\tau^j
=\tau^m+\sum_{j=0}^{m-1}\sum_{|\gamma|=m-j}a_{m-j,\gamma}(t)\xi^\gamma\tau^j.
\]
This means that
\[
\tau^m+\sum_{j=0}^{m-1}\sum_{|\gamma|=m-j}a_{m-j,\gamma}(t)\xi^\gamma\tau^j=\prod_{l=1}^m (\tau-\lambda_l(t,\xi)).
\]

The well-posedness of the weakly hyperbolic equations has been a challenging problem for a long time.
For example, even for the second order Cauchy problem in one space dimension,
\begin{equation}\label{EQ:we}
\partial_t^2 u - a(t,x)\partial_x^2 u=0, \; u(0,x)=g_1(x),\; \partial_t u(0,x)=g_2(x),
\end{equation}
up until now there is no characterisation of smooth functions $a(t,x)\geq 0$ for which
\eqref{EQ:we} would be $C^\infty$ well-posed. On one hand, there are sufficient
conditions. For example, Oleinik has shown in \cite{O}  that \eqref{EQ:we} is $C^\infty$
well-posed provided there is a constant $C>0$ such that
$C a(t,x)+\partial_t a(t,x)\geq 0$.
In the case of $a(t,x)=a(t)$ depending only on $t$, when the problem becomes
\begin{equation}\label{EQ:we1}
\partial_t^2 u - a(t)\partial_x^2 u=0, \; u(0,x)=g_1(x),\; \partial_t u(0,x)=g_2(x),
\end{equation}
the Oleinik's
condition is satisfied for $a(t)\geq 0$ with $a'(t)\geq 0$. On the other hand, in the
celebrated paper \cite{CS}, Colombini and Spagnolo constucted a $C^\infty$ function
$a(t)\geq 0$ such that \eqref{EQ:we1} is not $C^\infty$ well-posed.
The situation becomes even more complicated if one adds mixed terms to \eqref{EQ:we1},
even depending only on $t$ and analytic.
For example, the Cauchy problem for the equation
$$
\partial_t^2 u - 2t \partial_t\partial_x u+ t^2\partial_x^2 u=0
$$
is Gevrey $G^s$ well-posed for $s<2$ while it is ill-posed for any $s>2$.
For other positive and negative results for second order equations with
time-dependent coefficients we refer to seminal papers
of Colombini, De Giorgi and Spagnolo \cite{CDS} and
Colombini, Jannelli and Spagnolo \cite{CJS2}, and to
Nishitani \cite{N} for the necessary and sufficient conditions for
the $C^{\infty}$ well-posedness of \eqref{EQ:we} with analytic
$a(t,x)\geq 0$ in one dimension.

A reasonable substitute for the $C^\infty$ well-posedness in the weakly hyperbolic setting
is the well-posedness in the space $G^\infty=\bigcup_{s>1} G^s$. Thus,
Colombini, Jannelli and Spagnolo proved in \cite{CJS} that for every $C^\infty$ function
$a(t)\geq 0$, the Cauchy problem \eqref{EQ:we1} is $G^\infty$ well-posed.
More precisely, they showed that if $a(t)$ is in $C^k$, it is well posed in
$G^s$ with $s\leq 1+k/2$, and if $a(t)$ is analytic, it is $C^\infty$ well-posed.

From another direction, there are also general results for \eqref{CP}. For example,
it was shown by Bronshtein in \cite{B} that, in paticular, the Cauchy problem
\eqref{CP} with $C^\infty$ coefficients is $G^s$ well-posed provided that
$1\leq s<1+\frac{1}{m-1}.$ In some cases, this can be improved. For example,
for constant multiplicities,
see paper \cite{ColKi:02} by Colombini and Kinoshita in one-dimension
(see also D'Ancona and Kinoshita \cite{DK}),
and the authors' paper \cite{GR:11} for further improvements of
Gevrey indices and all dimensions,
with a survey of literature therein.

In this paper our interest in analysing the Cauchy problem \eqref{CP} is motivated by
\begin{itemize}
\item[(A)] allowing any space dimension $n\geq 1$;
\item[(B)] considering the effect of lower order terms or, rather, the properties of the lower
order terms which do not influence the results on the Gevrey well-posedness
(we will look at new effects for both continuous and discontinuous
lower order terms);  the inclusion of lower order terms in this setting has
been untreatable by previous methods;
\item[(C)] providing well-posedness results in spaces of distributions and ultradistributions.
\end{itemize}

Our main reference here is the paper
\cite{KS} of Kinoshita and Spagnolo who
have studied the Cauchy problem \eqref{CP} for operators
with homogeneous symbols
in one dimension, $x\in\R$. Under the condition
\begin{multline}
\label{LC}
\exists M>0:\\[0.3cm]
\lambda_i(t,\xi)^2+\lambda_j(t,\xi)^2\le M(\lambda_i(t,\xi)-\lambda_j(t,\xi))^2,\qquad 1\le i,j\le m, t\in[0,T],  \textrm{ for all }\xi,
\end{multline}
on the roots $\lambda_j(t,\xi)$ they have obtained the following well-posedness result:
\begin{thm}[\cite{KS}]
\label{teo_KS}
Assume that $n=1$ and that the differential operator is homogeneous, i.e.
$A_{m-j}(t,\xi)=A_{(m-j)}(t,\xi)=a_{m-j}(t)\xi^{m-j}$ for all $j=0,\ldots,m-1$.
If $a_{m-j}\in C^\infty([0,T])$ and the characteristic roots are real and satisfy \eqref{LC}, then the Cauchy problem \eqref{CP} is well-posed in any Gevrey space. More precisely, if $a_j\in{C}^k([0,T])$ for some $k\ge 2$ then we have $G^s$-well-posedness for
\[
1\le s<1+\frac{k}{2(m-1)}.
\]
\end{thm}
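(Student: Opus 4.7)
The plan is to follow the energy method based on the D'Ancona--Spagnolo \emph{quasi-symmetriser} of the companion matrix of the principal symbol, which is tailor-made for weakly hyperbolic systems whose characteristic roots satisfy a Levi-type inequality such as \eqref{LC}. After Fourier transformation in $x$ and the choice of the vector unknown
\[
V(t,\xi) = (\langle\xi\rangle^{m-1}\hat u,\,\langle\xi\rangle^{m-2}D_t\hat u,\,\ldots,\,D_t^{m-1}\hat u)^{T},
\]
the scalar equation \eqref{CP} becomes a first-order system $D_t V(t,\xi)=\langle\xi\rangle A(t,\xi/\langle\xi\rangle)V(t,\xi)$, where $A(t,\omega)$ is the companion matrix whose roots are $\la_l(t,\xi)/\langle\xi\rangle$, bounded in $\xi$ by the homogeneity of the $A_{m-j}$. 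The $G^s$ well-posedness then reduces to a pointwise estimate of the form $|V(t,\xi)|\le C\langle\xi\rangle^{N}\exp(\de\langle\xi\rangle^{1/s})|V(0,\xi)|$ with $\de$ arbitrarily small.

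The heart of the proof is the construction of a family of Hermitian matrices $Q_\eps(t,\omega)$, polynomial in $\eps\in(0,1]$, satisfying (i) $c_0\eps^{2(m-1)}I\le Q_\eps(t,\omega)\le C_0 I$; (ii) the near-symmetrising estimate $|\langle(Q_\eps A-A^{\ast}Q_\eps)V,V\rangle|\le C_1\eps\,\langle Q_\eps V,V\rangle$; and (iii) $|\langle\partial_t Q_\eps V,V\rangle|\le C_2\eps^{-1}(\sup_j\|\partial_t a_{m-j}\|_{\infty})\langle Q_\eps V,V\rangle$, all uniformly in $(t,\omega)$. The decisive fact is that \eqref{LC} forces the elementary symmetric functions of the $\la_l$ to dominate the cross-terms $\la_i\la_j$, which is precisely what makes (ii) and (iii) hold with the stated negative powers of $\eps$. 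Taking $E_\eps(t,\xi):=\langle Q_\eps(t,\xi/\langle\xi\rangle)V(t,\xi),V(t,\xi)\rangle$, differentiating along the system, using (ii)--(iii), applying Grönwall, and lower-bounding by (i) yields
\[
|V(t,\xi)|^{2}\le C\,\eps^{-2(m-1)}|V(0,\xi)|^{2}\exp\!\bigl(CT(\langle\xi\rangle\eps+\eps^{-1})\bigr).
\]

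For coefficients of only $C^k$ regularity the quasi-symmetriser is not differentiable enough in $t$, so one first regularises by Friedrichs mollification $a_{m-j,\eps}:=a_{m-j}\ast\rho_\eps$ (after a $C^k$-extension to $\R$), which provides $|a_{m-j}-a_{m-j,\eps}|\le C\eps^{k}$ together with the scale of derivative bounds $|\partial_t^l a_{m-j,\eps}|\le C\eps^{-(l-k)_{+}}$. Building $Q_\eps$ from the regularised $A_\eps$ and treating $\langle\xi\rangle(A-A_\eps)$ as a bounded source produces an extra contribution $\langle\xi\rangle\eps^{k}$ inside the Grönwall exponent. Setting $\eps=\langle\xi\rangle^{-\sigma}$ and choosing $\sigma$ so as to balance the three competing contributions $\langle\xi\rangle^{1-\sigma}$, $\langle\xi\rangle^{1-k\sigma}$ and $\langle\xi\rangle^{\sigma}$, while bookkeeping the $\eps^{-2(m-1)}$ prefactor, leads to a final bound of the announced form with $1/s=2(m-1)/(k+2(m-1))$, i.e.\ precisely $s<1+k/(2(m-1))$. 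The polynomial prefactor $\langle\xi\rangle^{2(m-1)\sigma}$ is harmless as it only shifts the Sobolev index. The $C^\infty$ statement follows by letting $k\to\infty$.

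The main obstacle is the construction and verification of $Q_\eps$, specifically the algebraic estimates (ii) and (iii) under assumption \eqref{LC}: these require writing $Q_\eps A-A^{\ast}Q_\eps$ and $\partial_t Q_\eps$ in terms of the elementary symmetric functions of the characteristic roots and then using \eqref{LC} to absorb the off-diagonal products $\la_i\la_j$ by the diagonal $(\la_i-\la_j)^{2}$ terms, all the while keeping track of the precise powers of $\eps$ in each block. The rest of the argument is standard energy-method bookkeeping. Notably, it is exactly this algebraic step that will fail once one wishes to add non-analytic lower order terms or to pass to several space variables, which is precisely the obstacle the rest of the paper is devoted to overcoming.
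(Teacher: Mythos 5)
Your overall framework --- reduction to a first-order Sylvester system, the quasi-symmetriser $Q_\eps$, an energy estimate closed by Gronwall, and the Fourier characterisation of Gevrey classes --- is the paper's route (following Kinoshita--Spagnolo), and your properties (i), (ii) are Proposition~\ref{prop_qs}(ii)--(iii). The gap lies in your treatment of the $C^k$ regularity via the $\partial_t Q_\eps$ term, and it is fatal to the announced exponent. The pointwise bound you posit in (iii), $|(\partial_t Q_\eps V,V)|\le C\eps^{-1}(Q_\eps V,V)$, is not available: the entries of $\partial_t Q_\eps$ are merely bounded while the denominator can be as small as $c\eps^{2(m-1)}|V|^2$, which yields a factor $\eps^{-2(m-1)}$, not $\eps^{-1}$, and uses nothing beyond $C^1$. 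Friedrichs mollification does not repair this: it produces a replacement error $\lara{\xi}\eps^{k}$ but leaves $\partial_t a_{m-j,\eps}$ uniformly bounded, so the $\partial_t Q_\eps$ contribution is still $\eps^{-2(m-1)}$ and the optimal balance $\lara{\xi}\eps\sim\eps^{-2(m-1)}$ only gives $s<1+\frac{1}{2(m-1)}$, the $k=1$ exponent, for every $k$. Your balance of $\lara{\xi}^{1-\sigma},\lara{\xi}^{1-k\sigma},\lara{\xi}^{\sigma}$ rests on the unsupported $\eps^{-1}$ and, even taken at face value, returns the $(m,k)$-independent bound $s<2$ rather than $s<1+\frac{k}{2(m-1)}$.

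The missing idea is the \emph{time-integrated, fractional-exponent} estimate (Lemma~\ref{lem_new} here; Lemma~2 of Kinoshita--Spagnolo), which is the true point of entry of both the $C^k$ regularity and condition~\eqref{LC}. Since~\eqref{LC} makes the family $\{Q_\eps\}$ nearly diagonal (Proposition~\ref{prop_SM}), one writes
\[
\frac{|(\partial_t Q_\eps V,V)|}{(Q_\eps V,V)}
=\frac{|(\partial_t Q_\eps V,V)|}{(Q_\eps V,V)^{1-1/k}(Q_\eps V,V)^{1/k}}
\le C_m^{1/k}\,\eps^{-2(m-1)/k}\;
\frac{|(\partial_t Q_\eps V,V)|}{(Q_\eps V,V)^{1-1/k}\,|V|^{2/k}},
\]
and Lemma~\ref{lem_new} --- a Colombini--Jannelli--Spagnolo/Glaeser-type inequality for nonnegative $C^k$ quantities, valid precisely because of near-diagonality --- bounds $\int_0^T(\cdot)\,dt$ by $C_T\|Q_\eps(\cdot,\xi)\|_{C^k}^{1/k}$. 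Thus the Gronwall exponent contains $C\eps^{-2(m-1)/k}$ rather than $C\eps^{-2(m-1)}$, and balancing against $\lara{\xi}\eps$ gives $\eps=\lara{\xi}^{-k/(k+2(m-1))}$ and exactly $s<1+\frac{k}{2(m-1)}$. Mollification cannot substitute for this fractional-power mechanism; omitting it irretrievably loses the $k$-dependence.
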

The proof is based on the construction of a quasi-symmetriser $Q_\eps^{(m)}$ which thanks to the
condition \eqref{LC} is nearly diagonal. Previously, equations of second and third order
with analytic coefficients, still with $n=1$ and without low order terms,
have been analysed by Colombini and Orr\'{u} \cite{CO}. They have shown the $C^\infty$
well-posedness of \eqref{CP} under assumption \eqref{LC}. Moreover, if all the
coefficients $a_{m-j}(t)$ vanish at $t=0$, they showed that the {\em condition
\eqref{LC} is also necessary}.
So, for us it will be natural to adopt \eqref{LC} for our analysis.

Let us briefly discuss the difficulties of aims (A)--(C) above. For the dimensional
extension (A), even under condition \eqref{LC} for the
characteristic roots, for space dependent coefficients
such an extension is impossible, see e.g. Bernardi and Bove \cite{BB},
for examples of second order operators with polynomial coefficients for
which the $C^{\infty}$ well-posedness fails for any $n\geq 2$. It is interesting
to note that for these examples the usual Ivrii--Petkov conditions on lower
order terms are also satisfied. As we will show, the $C^{\infty}$ (and other)
well-posedness holds in our case in any dimension $n\geq 1$ 
since the coefficients depend only on time.
In part (B), the proof of the well-posedness for equations with lower order terms
highlights several interesting and somewhat surprising
phenomena. For example, if the coefficients of
the principal part are analytic and the lower order terms are only bounded
(in particular, they may be discontinuous, or may exhibit more irregular
oscillating behaviour), but the Cauchy data is Gevrey,
we still obtain the solution in Gevrey spaces. Indeed, the Levi conditions in
this paper control the zeros of the lower order terms but not their
regularity. 
Finally, aim (C) is motivated by an interesting and challenging
problem for weakly hyperbolic equations: analysing the propagation of
singularities. For this, in order to be able to use also non-Gevrey
techniques, we need to have first well-posedness in some bigger space.
This will be achieved for the Cauchy problem \eqref{CP}
in the spaces of Beurling Gevrey ultradistributions. A subtle point of this construction
is that we will have to use the Beurling Gevrey ultradistributions and not the usual
Roumieu Gevrey ultradistributional class. In the case of the analytic principal
part we will obtain well-posedness in the usual space of distributions.

In particular, in 
this paper we extend Theorem \ref{teo_KS} to weakly hyperbolic
equations with non-homogeneous symbols and in any space dimension $n\geq 1$,
and find suitable assumptions on the lower order terms for the Gevrey
well-posedness. Already from the beginning we deviate from \cite{KS} by
using pseudo-differential techniques to reduce the equation to the system. This
will allow us to treat all the dimensions $n\geq 1$. However, the main challenge in the present
paper is the analysis of the lower order terms. In fact, in most (if not all) the literature on
the application of the quasi-symmetriser to weakly hyperbolic equations the considered
equations are always assumed to have homogeneous symbols.
It is our intension to show that
the quasi-symmetriser can be effectively used to control parts of the
energy corresponding to the
lower order terms. It is interesting to see the appearing Levi conditions expressing
the dependence of the lower order terms on the principal part of the operator.
Such control becomes possible by exploiting the Sylvester form of the
system corresponding to equation \eqref{CP}, and the structure of the
quasi-symmetriser.

An interesting effect that we observe
is that the results remain true assuming just the
continuity of the lower order terms in time. For example, we will have
the $C^{\infty}$ well-posedness for equations with analytic coefficients
in the principal part and only continuous lower order terms.
Moreover, we give a variant of our results with only assuming the
boundedness of lower order terms in time (instead of continuity).

In this paper we formulate the conditions on the lower order terms in terms of the symbols
$A_{m-j+1}$. Note that in \eqref{CP}, the operator
$A_{m-j+1}(t,D_x)$ is the coefficient in front of the derivative
$D_t^{j-1}$. We assume that there is some constant $C>0$ such that we have
\begin{multline}\label{EQ:lot}
\left|(A_{m-j+1}-A_{(m-j+1)})(t,\xi)\right| \leq
C\sum_{i=1}^m \left|
\sum_{\substack{1\leq \ell_1<\cdots<\ell_{m-j}\leq m \\ \ell_h\not=i\; \forall h}}
\lambda_{\ell_1}(t,\xi)\cdots \lambda_{\ell_{m-j}}(t,\xi)
\right|,
\end{multline}
for all $t\in [0,T]$, $j=1,\ldots, m$ and for $\xi$ away from $0$ (i.e., for $|\xi|\ge R$ for some $R>0$).

For $j=m$, this is the condition on the low order terms coming from the coefficient
in front of $D_t^{m-1}$ , in which case $A_1-A_{(1)}$ is independent of $\xi$, and
assumption \eqref{EQ:lot} should read as
$$
 |(A_1-A_{(1)})(t,\xi)|\leq C, \; t\in [0,T],
$$
which will be automatically satisfied due to the boundedness of
$A_{1}$ in $t$.
In Section \ref{SEC:ex} we will give examples of the condition \eqref{EQ:lot}.
We will also show in treating the case $m=3$
that from the point of view of the desired energy inequality for
\eqref{CP} the assumption \eqref{EQ:lot} is rather natural.

We are now ready to formulate the well-posedness results. Part (i) of
Theorem \ref{theo_GRi} is
the extension of Theorem 1 in \cite{KS} to any space-dimension
and to equations with low order terms.
In the sequel $\mathcal{D}'_{(s)}(\R^n)$ ($\mathcal{E}'_{(s)}(\R^n)$)
denotes the space of Gevrey Beurling (compactly supported) ultradistributions.
For the relevant details on these spaces of ultradistributions and their characterisations,
with their appearance in the analysis of weakly hyperbolic equations, we refer to
our paper \cite{GR:11}, where these have been applied to the low (H\"older)
regularity  constant multiplicities case.
\begin{thm}
\label{theo_GRi} Let $n\geq 1$.
If the coefficients satisfy $A_j(\cdot,\xi)\in{C}([0,T])$ and
$A_{(j)}(\cdot,\xi)\in C^\infty([0,T])$ for all $\xi$,
the characteristic roots are real and satisfy
\eqref{LC}, and the low order terms satisfy \eqref{EQ:lot}, then the Cauchy problem
\eqref{CP} is well-posed in any Gevrey space. More precisely,
for $A_j(\cdot,\xi)\in{C}([0,T])$, we have:
\begin{itemize}
\item[(i)] if $A_{(j)}(\cdot,\xi)\in{C}^k([0,T])$ for some $k\ge 2$ and
$g_j\in G^s(\R^n)$ for $j=1,...,m,$
then there exists a unique solution $u\in C^m([0,T];G^s(\R^n))$ provided that
\[
1\le s<1+\frac{k}{2(m-1)};
\]
\item[(ii)] if $A_{(j)}(\cdot,\xi)\in{C}^k([0,T])$ for some $k\ge 2$ and
$g_j\in \mathcal{E}'_{(s)}(\R^n)$ for $j=1,...,m,$ then there exists a unique
solution $u\in C^m([0,T];\mathcal{D}'_{(s)}(\R^n))$ provided that
\[
1\le s\le 1+\frac{k}{2(m-1)}.
\]
\end{itemize}
\end{thm}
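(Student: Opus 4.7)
The plan is to reduce \eqref{CP} to a first-order pseudo-differential system in Sylvester form and adapt the quasi-symmetriser technique of Kinoshita--Spagnolo \cite{KS} in two directions: to arbitrary dimension $n\ge 1$ and to the presence of lower order terms. The key point allowing $n\ge 1$ is that the coefficients depend only on $t$, so after Fourier transform in $x$ the problem becomes a parameter-dependent ODE in $\xi\in\R^n$ that is structurally identical to the one-dimensional case. Setting $U_j:=\langle D_x\rangle^{m-j}D_t^{j-1}u$ for $j=1,\dots,m$, the vector $U=(U_1,\ldots,U_m)^T$ satisfies
\[
 D_t U = \langle D_x\rangle A(t,D_x)U + B(t,D_x)U,
\]
where $A(t,\xi)$ is the Sylvester matrix whose last row is built from the normalised principal coefficients $A_{(m-j)}(t,\xi)/\langle\xi\rangle^{m-j}$, with eigenvalues $\lambda_j(t,\xi)/\langle\xi\rangle$, while $B(t,\xi)$ collects the sub-principal contributions, whose last-row entries are controlled by \eqref{EQ:lot}.

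Next I would regularise the entries of $A$ by convolution in $t$ with a mollifier at scale $\varepsilon\in(0,1]$, obtaining smooth $A_\varepsilon(t,\xi)$ that approximates $A$ at rate $\varepsilon^k$ with the usual loss of one power of $\varepsilon$ per extra differentiation, and associate to $A_\varepsilon$ the quasi-symmetriser $Q_\varepsilon(t,\xi)$ of \cite{KS}. Under \eqref{LC} this matrix is Hermitian and satisfies
\[
 c\,\varepsilon^{2(m-1)}|V|^2 \le (Q_\varepsilon V,V)\le C|V|^2, \qquad |((Q_\varepsilon A_\varepsilon - A_\varepsilon^{\ast} Q_\varepsilon)V,V)| \le C\varepsilon\,(Q_\varepsilon V,V),
\]
together with a quantitative control of $\partial_t Q_\varepsilon$ in terms of $\varepsilon$ inherited from the mollification scale. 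With the energy $E_\varepsilon(t,\xi):=(Q_\varepsilon(t,\xi)\hat U,\hat U)+|\hat U|^2$, differentiating in $t$ yields, schematically,
\[
 \tfrac{d}{dt}E_\varepsilon(t,\xi) \le \bigl(C_1\langle\xi\rangle\varepsilon + C_2\,\varphi_k(\varepsilon) + \Phi_B(t,\xi)\bigr)E_\varepsilon(t,\xi),
\]
where $\varphi_k(\varepsilon)$ collects the contributions of $\partial_t Q_\varepsilon$ and of the smoothing defect $Q_\varepsilon(A-A_\varepsilon)$, and $\Phi_B$ arises from $(Q_\varepsilon B\hat U,\hat U)$.

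The central new step is the treatment of $\Phi_B$, which is the principal obstacle compared to \cite{KS}. I would show that the elementary-symmetric products of the $\lambda_\ell(t,\xi)/\langle\xi\rangle$ appearing on the right-hand side of \eqref{EQ:lot} are precisely the cofactor-type minors that recur in the expansion of $Q_\varepsilon$ along its last column, so that a Cauchy--Schwarz type inequality $|(Q_\varepsilon B V,V)|\le C(Q_\varepsilon V,V)$ holds with a constant $C$ independent of $\varepsilon$. It is this cofactor matching that makes \eqref{EQ:lot} the natural Levi-type assumption and prevents the lower order terms from destroying the $\varepsilon^{2(m-1)}$ balance; only continuity of $A_j(\cdot,\xi)$ in $t$ is needed here. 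Without \eqref{EQ:lot} the lower order terms would have to be compensated by extra regularity of the principal coefficients, which we do not wish to assume.

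Integrating the resulting differential inequality and using the two-sided bound on $Q_\varepsilon$, I obtain
\[
 |\hat U(t,\xi)|^2 \lesssim \varepsilon^{-2(m-1)} \exp\!\bigl(Kt\,(\varepsilon\langle\xi\rangle+\varphi_k(\varepsilon))\bigr)|\hat U(0,\xi)|^2.
\]
Equilibrating the two $\xi$-dependent contributions by the standard choice $\varepsilon=\langle\xi\rangle^{-1/k}$ produces a bound of the form $C\langle\xi\rangle^{2(m-1)/k}\exp(Kt\langle\xi\rangle^{1-1/k})$, i.e.\ Fourier growth $\exp(C\langle\xi\rangle^{1/s})$ for every $s<1+k/(2(m-1))$; this yields part (i) through the Fourier characterisation of Gevrey spaces. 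For part (ii), the very same estimate in the borderline case $s=1+k/(2(m-1))$ gives, via the Beurling Gevrey characterisations recalled in \cite{GR:11}, a $C^m$-in-time solution taking values in $\mathcal{D}'_{(s)}(\R^n)$ when the data lie in $\mathcal{E}'_{(s)}(\R^n)$.
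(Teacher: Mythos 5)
Your overall strategy (reduction to a Sylvester first-order system, Fourier transform in $x$, quasi-symmetriser energy, optimisation in $\eps$) matches the paper's skeleton, but two crucial steps are either wrong or not actually argued, and you also depart technically from the paper in a way that would require substantial extra work.

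\textbf{The equilibration is wrong.} You claim that the contributions from the $\partial_t Q_\eps$ term and the commutator term are balanced by $\eps=\lara{\xi}^{-1/k}$, yielding a Fourier bound $\esp^{C\lara{\xi}^{1-1/k}}$ and hence $s<1+k/(2(m-1))$. But $1-1/k$ does not equal $1/(1+k/(2(m-1)))$, and the exponent $1-1/k$ does not even depend on $m$. The correct estimate (which the paper derives) is
\[
\partial_t E_\eps\le\bigl(C_1\eps^{-2(m-1)/k}+C_2\eps\lara{\xi}+C_3\bigr)E_\eps,
\]
and setting $\eps^{-2(m-1)/k}=\eps\lara{\xi}$ gives $\eps=\lara{\xi}^{-k/(k+2(m-1))}$, so $\eps\lara{\xi}=\lara{\xi}^{2(m-1)/(k+2(m-1))}=\lara{\xi}^{1/\sigma}$ with $\sigma=1+k/(2(m-1))$. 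Your choice $\eps=\lara{\xi}^{-1/k}$ does not produce the claimed index. Moreover, you never explain why the $\partial_t Q_\eps$ contribution is of size $\eps^{-2(m-1)/k}$ rather than the naive $\eps^{-2(m-1)}$ coming from the lower bound $Q_\eps\ge c\,\eps^{2(m-1)}I$; that gain is precisely the content of the paper's Lemma \ref{lem_new} (the Kinoshita--Spagnolo nearly-diagonal $C^k$ lemma), which has no analogue in your scheme.

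\textbf{The lower-order estimate is asserted, not proved.} You say the right-hand side of \eqref{EQ:lot} consists of ``cofactor-type minors'' of $Q_\eps$ so a Cauchy--Schwarz argument gives $|(Q_\eps B V,V)|\le C(Q_\eps V,V)$, but this is the heart of the paper and requires a real argument. The paper first observes (using Proposition \ref{prop_qs}(iv) and the zero structure of $B$) that only the $Q_0^{(m)}$ part contributes, i.e.\ $((Q^{(m)}_\eps B-B^\ast Q^{(m)}_\eps)V,V)=((Q_0^{(m)} B-B^\ast Q_0^{(m)})V,V)$; then, writing $Q_0^{(m)}=(m-1)!\,\mathcal{W}^\ast\mathcal{W}$, it reduces to proving $|\mathcal{W}BV|\prec|\mathcal{W}V|$ and does so by an inductive argument over a partition of $\C^m$ into regions $\Sigma_k^{\delta_k}$ (Theorem \ref{theo_LC}, Lemmas \ref{lemma1}--\ref{lemma2}). ``Cofactor matching'' is not a substitute for this chain of estimates, and the needed inequality is not a simple Cauchy--Schwarz application: it relies heavily on condition \eqref{LC} to pass from differences $\lambda_i-\lambda_j$ to $\lambda_i^2+\lambda_j^2$.

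\textbf{On the mollification.} The paper does \emph{not} mollify: it builds $Q_\eps$ directly from the given $C^k$ coefficients and then uses Lemma \ref{lem_new} to exploit the $C^k$ regularity. Your mollification approach (in the spirit of Colombini--Jannelli--Spagnolo) is a genuine alternative for second-order scalar equations, but to make it work here you would still need an analogue of Lemma \ref{lem_new} or of the Glaeser-type estimates to convert the $\eps^{-2(m-1)}$ lower-bound penalty into $\eps^{-2(m-1)/k}$; you do not carry this out, and as written the final estimate does not follow.
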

In the case of analytic coefficients, we have $C^\infty$ and distributional well-posedness.
\begin{thm}
\label{theo_GR2i}
If  $A_j(\cdot,\xi)\in{C}([0,T])$ and
the coefficients $A_{(j)}(\cdot,\xi)$ are analytic on $[0,T]$ for all $\xi$, the characteristic roots are real and satisfy \eqref{LC}, and the lower order terms fulfil the conditions \eqref{EQ:lot} then the Cauchy problem \eqref{CP} is $C^\infty$ and distributionally well-posed.
\end{thm}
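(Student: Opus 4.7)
The plan is to mimic the scheme behind Theorem \ref{theo_GRi}, replacing the Gevrey-type energy bound by a polynomial-in-$\xi$ estimate that characterises $C^\infty$ and distributional well-posedness; the analyticity of the principal coefficients is what enables this sharpening.

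First I would reduce \eqref{CP} to a first-order pseudo-differential system in Sylvester form,
\[
D_t U = A(t,D_x)U + B(t,D_x)U, \qquad U = (\lara{D_x}^{m-1}u,\lara{D_x}^{m-2}D_t u,\dots,D_t^{m-1}u)^T,
\]
where $A(t,\xi)$ is the $m\times m$ Sylvester companion matrix of the principal symbol and $B(t,\xi)$ collects the lower order terms, with nonzero entries only in the last row. Since the coefficients depend only on $t$, a partial Fourier transform in $x$ turns this into a family of ODE systems parametrised by $\xi$. I would then attach to $A$ the Kinoshita--Spagnolo quasi-symmetriser $Q_\eps(t,\xi)$ and set $E_\eps(t,\xi) = (Q_\eps V,V)$, where $V=V(t,\xi)$ is the partial Fourier transform of $U$. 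By \eqref{LC} one has $c_1\eps^{2(m-1)}I \le Q_\eps \le c_2 I$ together with $\|Q_\eps A - A^*Q_\eps\| \le C\eps^2\lara{\xi}$.

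Differentiating $E_\eps$ in $t$ produces three contributions to control: (a) $(\partial_t Q_\eps V,V)$; (b) the principal-part piece $2\Im(Q_\eps A V,V)$; (c) the lower-order piece $2\Im(Q_\eps B V,V)$. The main technical obstacle, already the heart of the proof of Theorem \ref{theo_GRi}, is the treatment of (c): the Sylvester shape of $B$ and the block structure of $Q_\eps$ must combine with \eqref{EQ:lot} to yield
\[
|2\Im(Q_\eps B V,V)| \le C\,E_\eps(t,\xi),
\]
with $C$ independent of $\eps$ and $\xi$. The Levi condition \eqref{EQ:lot} is tailored for exactly this: the elementary symmetric combinations $\lambda_{\ell_1}\cdots\lambda_{\ell_{m-j}}$ on its right-hand side are precisely the quantities that arise when $Q_\eps$ is paired against the last row of $B$, so the bound can be absorbed by the quasi-symmetric energy. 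Crucially, only continuity of the lower order coefficients in $t$ enters here, matching the hypothesis $A_j(\cdot,\xi)\in C([0,T])$.

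The analytic hypothesis on $A_{(j)}(\cdot,\xi)$ enters when bounding (a). Cauchy estimates on the analytic entries of $A$ propagate to $Q_\eps$ and give $|\partial_t Q_\eps(t,\xi)| \le C\eps^{2(m-1)}$ uniformly on $[0,T]$, without the loss of negative powers of $\eps$ incurred in the finite-regularity case of Theorem \ref{theo_GRi}(i). All three contributions then combine into a differential inequality of the form $\partial_t E_\eps \le K\bigl(1+\eps^2\lara{\xi}/\eps^{2(m-1)}\bigr)E_\eps$; choosing $\eps=\eps(\xi)$ optimally, in the analytic spirit of Colombini--Orr\'u and \cite{KS}, integration yields
\[
|V(t,\xi)| \le C\lara{\xi}^{M}\,|V(0,\xi)|\qquad\text{uniformly in }t\in[0,T],
\]
for some $M>0$ independent of $\xi$. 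A symbol bound of this polynomial type is exactly what is required to transport Schwartz data to Schwartz solutions and tempered distributions to tempered distributions, delivering both the $C^\infty$ and the distributional well-posedness of \eqref{CP}; uniqueness follows from the same inequality applied to the difference of two candidate solutions.
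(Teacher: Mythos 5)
The overall architecture of your proposal (Sylvester reduction, quasi-symmetriser, three-term energy estimate, absorption of the lower order piece by the Levi conditions, polynomial bound $|V(t,\xi)|\le C\lara{\xi}^M|V(0,\xi)|$ yielding $C^\infty$ and distributional well-posedness) is the right one, and the treatment of the lower-order contribution matches the paper. The serious gap is in your handling of the $\partial_t Q_\eps$ term, which is exactly where the analyticity hypothesis must do work. You assert that Cauchy estimates give $|\partial_t Q_\eps(t,\xi)|\le C\eps^{2(m-1)}$ uniformly, and from this conclude the constant bound on $\frac{|(\partial_t Q_\eps V,V)|}{E_\eps}$. That estimate is false: $Q_\eps=Q_0+\eps^2Q_1+\cdots+\eps^{2(m-1)}Q_{m-1}$, so as $\eps\to 0$ the quasi-symmetriser and its $t$-derivative tend to $Q_0$ and $\partial_tQ_0$, which do not vanish. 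Cauchy estimates control a derivative of an analytic function by its sup norm on a larger domain; they do not produce any power of the artificial parameter $\eps$, which is unrelated to any radius of analyticity. Because $\partial_t Q_0$ need not be small while the coercivity constant degenerates like $\eps^{2(m-1)}$, the naive quotient $\frac{|(\partial_t Q_\eps V,V)|}{E_\eps}$ is only $O(\eps^{-2(m-1)})$, not $O(1)$, and your differential inequality does not follow.

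What the paper actually uses is the Kinoshita--Spagnolo zero-partition argument, promoted to the $\xi$-parametric setting (Lemma \ref{lem_par}): for an analytic, zero-homogeneous $f$ there is a finite partition $0=\tau_0<\cdots<\tau_{N(\xi)}=T$, with $\sup_{\xi\neq 0}N(\xi)<\infty$, away from whose nodes $|\partial_t f/f|\le C\bigl(\frac{1}{t-\tau_h}+\frac{1}{\tau_{h+1}-t}\bigr)$. One then uses a \emph{switching energy}: $E_\eps=|V|^2$ on the $\eps$-neighbourhoods of the nodes, and $E_\eps=(Q_\eps V,V)$ on the interior pieces $[\tau_h+\eps,\tau_{h+1}-\eps]$. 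Near the nodes the trivial bound $\partial_tE_\eps\le 2(\|A_1\|+\|B\|)E_\eps$ gives a factor $e^{2\alpha\eps\lara{\xi}}$; in the interior, near-diagonality plus $|q_{\eps,ij}|\le\sqrt{q_{\eps,ii}q_{\eps,jj}}$ reduce $\frac{|(\partial_t Q_\eps V,V)|}{E_\eps}$ to $\sum_{ij}|\partial_t q_{\eps,ij}|/|q_{\eps,ij}|$, whose integral is only $O(\log(1/\eps))$ by the analytic estimate. Iterating over the $N(\xi)$ pieces and choosing $\eps=\lara{\xi}^{-1}$ yields $|V(t,\xi)|\le C\lara{\xi}^\kappa|V(0,\xi)|$. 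Without this log-integrability mechanism and the $\eps$-dependent energy switching (and without the uniform-in-$\xi$ bound on $N(\xi)$, which is itself a nontrivial point in several space dimensions), the polynomial bound does not follow, so your proof as written has a genuine hole at the step where analyticity enters.
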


By $W^{\infty,m}$ we denote the Sobolev space of functions having
$m$ derivatives in $L^{\infty}$.
In the case of discontinuous but bounded lower order terms we have the
following:
\begin{thm}
\label{theo_GR2ia}
{\rm (i)}
Assume the conditions of Theorem \ref{theo_GRi}, with $A_j(\cdot,\xi)\in{C}([0,T])$ replaced by
$A_j(\cdot,\xi)\in L^{\infty}([0,T])$, $j=1,\ldots,m$. Then the statement
remains true provided that we replace the conclusion
$u\in C^m([0,T];G^s(\R^n))$ by
\[
u\in C^{m-1}([0,T];G^s(\R^n))\cap W^{\infty,m}([0,T];G^s(\R^n)).
\]

{\rm (ii)}
Assume the conditions of Theorem \ref{theo_GR2i} with $A_j(\cdot,\xi)\in{C}([0,T])$ replaced by
$A_j(\cdot,\xi)\in L^{\infty}([0,T])$, $j=1,\ldots,m$. Then the
$C^{\infty}$ well-posedness
remains true provided that we replace the conclusion
$u\in C^m([0,T];C^\infty(\R^n))$ by
\[
u\in C^{m-1}([0,T];C^\infty(\R^n))\cap W^{\infty,m}([0,T];C^\infty(\R^n)).
\]
\end{thm}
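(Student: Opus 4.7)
The plan is to revisit the proofs of Theorems \ref{theo_GRi} and \ref{theo_GR2i} and identify precisely where the continuity of $A_j(\cdot,\xi)$ in $t$ is actually used, replacing it with the weaker $L^\infty([0,T])$ hypothesis wherever possible. As there, we first reduce \eqref{CP} to a first-order pseudo-differential system $D_t U = A(t,D_x) U$ in Sylvester form, whose matrix symbol splits as $A = A_{(\mathrm{pr})} + A_{\mathrm{low}}$, where $A_{(\mathrm{pr})}$ inherits the regularity of the $A_{(j)}$ (i.e.\ $C^k$ or analytic in $t$) and $A_{\mathrm{low}}$ is the block built from $A_j - A_{(j)}$, now only in $L^\infty([0,T])$. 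The quasi-symmetriser $Q_\eps^{(m)}(t,\xi)$ is constructed exclusively from the characteristic roots of the principal part, and \eqref{EQ:lot} still furnishes the pointwise (now only almost everywhere in $t$) control of $A_{\mathrm{low}}$ by products of the $\lambda_\ell(t,\xi)$, which is all that will be required.

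Inspecting the quasi-symmetriser energy estimate, all terms involving $A_{\mathrm{low}}$ enter through time integrals of the form $\int_0^t C(\tau,\xi)\,d\tau$, where $C(\tau,\xi)$ is controlled pointwise in $\tau$ by \eqref{EQ:lot}; continuity of $A_j(\cdot,\xi)$ is never exploited, only measurability and boundedness sufficient to apply Gronwall. Hence the a priori estimates of Theorems \ref{theo_GRi} and \ref{theo_GR2i} survive verbatim under the weaker hypothesis, bounding $D_t^j u(t,\cdot)$ in $G^s(\R^n)$ (respectively $C^\infty(\R^n)$) for $j=0,\ldots,m-1$, uniformly in $t$. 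To produce an actual solution, I would approximate each $A_j$ in $t$ by $A_j^\delta \in C([0,T])$ with $\|A_j^\delta\|_\infty \le \|A_j\|_\infty$ and $A_j^\delta \to A_j$ almost everywhere (e.g.\ by convolution), apply Theorems \ref{theo_GRi}--\ref{theo_GR2i} to the mollified problems, and extract a limit using the uniform estimates; that the limit solves the original equation follows from the measurable, frequency-localised nature of the estimates.

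The same uniform bounds give $D_t^j u \in C([0,T]; G^s(\R^n))$ for $j \le m-1$ by an equicontinuity argument, since $D_t^j u(t_1,\cdot) - D_t^j u(t_2,\cdot)$ is controlled by $\int_{t_1}^{t_2}$ of bounded quantities, yielding the claimed $C^{m-1}$ regularity in time. For the top-order derivative I would read it directly off the equation,
\[
D_t^m u(t,\cdot) = -\sum_{j=0}^{m-1} A_{m-j}(t,D_x)\, D_t^j u(t,\cdot).
\]
Each $A_{m-j}(t,D_x)$ has symbol bounded in $t$ and smooth (indeed Gevrey or analytic) in $\xi$, hence maps the relevant space into itself with operator norm bounded uniformly in $t$, so the right-hand side lies in $L^\infty([0,T]; G^s(\R^n))$, respectively $L^\infty([0,T]; C^\infty(\R^n))$, which is precisely the conclusion $u \in W^{\infty,m}$.

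The main obstacle is bookkeeping: verifying that no step in the quasi-symmetriser energy argument, as transplanted from the proofs of Theorems \ref{theo_GRi}--\ref{theo_GR2i}, secretly relies on evaluating $A_j(\cdot,\xi)$ at a distinguished time rather than under an integral, so that the hypothesis can indeed be relaxed from $C$ to $L^\infty$. The delicate point is the interplay between the Levi bound \eqref{EQ:lot} on $A_{\mathrm{low}}$ and the off-diagonal entries of the quasi-symmetriser that encode the near-diagonal behaviour forced by \eqref{LC}; once each such product is bounded almost everywhere by an integrable majorant, the Gronwall loop closes and both (i) and (ii) follow.
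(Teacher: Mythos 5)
Your proposal is correct and follows essentially the same route as the paper: observe that the energy estimates \eqref{estv1} and \eqref{PW} only use $L^\infty$-bounds of the lower order terms (since they enter via Gronwall-type time integrals), deduce $C^{m-1}$ time-regularity from the boundedness of $D_t V$, and read the $W^{\infty,m}$-regularity of the top derivative directly off the equation $D_t^m u = -\sum_{j=0}^{m-1} A_{m-j}(t,D_x)D_t^j u$. The mollification argument you add to secure existence is a detail the paper leaves implicit (existence also follows by the Carath\'eodory/Picard iteration for the ODE system in $t$ at fixed $\xi$), but it is a sound and equivalent way to close the argument.
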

We refer to Remark \ref{REM:sh} for a brief discussion of the strictly
hyperbolic case. In this case, even in the situation of the low regularity of
coefficients ($C^{1}$), one can analyse the global behaviour of solutions
with respect to time (see \cite{MR}). The cases of constant coefficients and
systems with controlled oscillations have been treated in \cite{RS} and
\cite{RW}, respectively.

Finally, we describe the contents of the sections in more details.

Section \ref{SEC:ex} collects some motivating examples of applications of our results.
In Section \ref{SEC:qs} we recall the required facts about the quasi-symmetriser
and in Section \ref{SEC:se} we use it to derive the energy estimate for the solutions
of the hyperbolic system in Sylvester form corresponding to the Cauchy
problem \eqref{CP}. The estimate on the part of the energy
corresponding to lower order terms is given in Section \ref{SEC:lc}. In Section \ref{SEC:wp}
we prove Theorems \ref{theo_GRi}, \ref{theo_GR2i} and \ref{theo_GR2ia} and we end the paper
with a final remark on the Levi conditions \eqref{EQ:lot}.


\section{Examples}
\label{SEC:ex}

Let us first give an example of the Levi conditions \eqref{EQ:lot} for the equations
of third order, $m=3$. In this case \eqref{EQ:lot} become
\beq
\begin{split}
\label{LC3i}
&|A_3-A_{(3)}|^2\le C(\lambda_1^2\lambda_2^2+\lambda_2^2\lambda^2_3+\lambda_3^2\lambda_1^2),\\
&|A_2-A_{(2)}|^2\le C((\lambda_1+\lambda_2)^2+(\lambda_2+\lambda_3)^2+(\lambda_3+\lambda_1)^2),\\
&|A_1-A_{(1)}|^2\prec C,
\end{split}
\eeq
for some $C>0$. It is convenient in certain applications, whenever possible, to write conditions
\eqref{LC} and \eqref{EQ:lot} in terms of the coefficients of the equation. Such
analysis for \eqref{CP} has been recently carried out by Jannelli and Taglialatela
\cite{JT}. In Example 3 below we will give an example of the meaning of conditions
\eqref{LC3i}.

For the future technicality, similarly to \eqref{LC3i}, we may
also use an equivalent formulation of \eqref{EQ:lot} as
\begin{equation}\label{EQ:lot2}
\left|(A_{m-j+1}-A_{(m-j+1)})(t,\xi)\right|^2 \leq
C\sum_{i=1}^m |
\sum_{\substack{1\leq \ell_1<\cdots<\ell_{m-j}\leq m \\ \ell_h\not=i\; \forall h}}
\lambda_{\ell_1}(t,\xi)\cdots \lambda_{\ell_{m-j}}(t,\xi)|^2.
\end{equation}

Condition \eqref{LC} can be often reformulated in terms of the
discriminant of \eqref{CP} defined by
$\Delta(t,\xi)=\prod_{i<j} (\lambda_i(t,\xi)-\lambda_j(t,\xi))^2.$
Thus, for $m=2$, $n=1$, and the equation
$$
\partial_t^2 u+a_1(t)\partial_t\partial_x u+a_2(t)\partial_x^2 u=0,
$$
condition \eqref{LC} is equivalent to the existence of $c>0$ such that
$\Delta(t)\geq c a_1(t)^2$,
where
$\Delta(t,\xi)=\Delta(t)\xi$ and $\Delta(t)=a_1^2(t)-4a_2(t)\geq 0$ is the
condition of the hyperbolicity.

For $m=3$, $n=1$, and the equation
$$
\partial_t^3 u+a_1(t)\partial_x\partial_t^2 u+a_2(t)\partial_x^2\partial_t u
+a_3(t)\partial_x^3 u=0,
$$
following \cite{KS}, we have
$\Delta(t,\xi)=\Delta(t)\xi$, with
$\Delta(t)=-4a_2^3-27 a_3^2+a_1^2 a_2^2-4a_1^3 a_3+18a_1 a_2 a_3\geq 0$,
and \eqref{LC} is equivalent to
$\Delta(t)\geq c(a_1(t) a_2(t)-9 a_3(t))^2.$

Since the hyperbolic equations above have homogeneous symbols, the
coefficients are real.
We refer to Colombini--Orr\'u \cite{CO} and Kinoshita--Spagnolo \cite{KS}
for more examples of equations without lower order terms in one dimension
$n=1$.

We now give more examples, which correspond to the new possibility, ensured by
Theorems \ref{theo_GRi} and \ref{theo_GR2i}, to consider equations with lower
order terms and equations in higher dimensions $n\geq 1$.

\subsection*{Example 1}
As a first example we consider the second order equation
\[
D_t^2 u+a_{2,2}(t)D_x^2u+a_{2,1}(t)D_xu+a_{1,0}(t)D_tu+a_{2,0}(t)u=0,
\qquad \text{$t\in[0,T]$ and $x\in\R$}.
\]
Assume $a_{2,2}(t)$ is real and
$a_{2,2}(t)\le 0$. The condition \eqref{LC} is trivially satisfied by the roots
\[
\begin{split}
\lambda_1(t,\xi)&=-\sqrt{-a_{2,2}(t)}|\xi|,\\
\lambda_2(t,\xi)&=+\sqrt{-a_{2,2}(t)}|\xi|.
\end{split}
\]
The well-posedness results of Section 1 are obtained under the conditions \eqref{EQ:lot} on the lower order terms. In this case \eqref{EQ:lot} means that the coefficient $a_{1,0}(t)$ is bounded on $[0,T]$ and that there exists a constant $c>0$ such that $|a_{2,1}(t)\xi+a_{2,0}(t)|^2\le -c a_{2,2}(t)\xi^2$ for all $t\in[0,T]$ and for $\xi$ away from $0$. Note that this last condition holds if $|a_{2,1}(t)|^2+|a_{2,0}(t)|^2\le -c'a_{2,2}(t)$ for some $c'>0$ on the $t$-interval $[0,T]$.

Take now the general second order equation
\[
D_t^2 u+a_{1,1}(t)D_xD_tu+a_{2,2}(t)D_x^2u+a_{2,1}(t)D_xu+a_{1,0}(t)D_tu+a_{2,0}(t)u=0.
\]
As observed above condition \eqref{LC} coincides with the bound from below
\[
\Delta(t)=a_{1,1}^2(t)-4a_{2,2}(t)\ge c_0 a_{1,1}^2(t),
\]
valid for some $c_0>0$ on $[0,T]$ (\cite[(15)]{KS}).
Here $a_{1,1}, a_{2,2}$ are assumed real.
The conditions \eqref{EQ:lot} on the lower order terms are of the type $|a_{1,0}(t)|\le c_1$ and $|a_{2,1}(t)\xi+a_{2,0}(t)|^2\le c_2(4a_{1,1}^2(t)-8a_{2,2}(t))\xi^2$ for all $t\in[0,T]$ and for $\xi$ away from $0$.

\subsection*{Example 2}
The equation
\[
D_t^2 u+\sum_{j=1}^n a_{1,j}(t)D_{x_j}D_tu+a_{2}(t)\sum_{j=1}^nD_{x_j}^2u+\sum_{j=1}^n b_j(t)D_{x_j}u +b(t)D_tu+d(t)u=0
\]
is an $n$-dimensional version of the previous example, with real
$a_{1,j}$ and $a_{2}$. The condition \eqref{LC} is trivially satisfied
when $a_2(t)\le 0$. The conditions \eqref{EQ:lot} on the lower order
terms are as follows:
\[
\begin{split}
|b(t)|&\le c,\\
\biggl|\sum_{j=1}^n b_j(t)\xi_j+d(t)\biggr|^2&\le c\biggl[4\biggl(\sum_{j=1}^n a_{1,j}(t)\xi_j\biggr)^2-8a_2(t)|\xi|^2\biggr],
\end{split}
\]
for $t\in[0,T]$ and $\xi$ away from $0$.

\subsection*{Example 3}
We finally give an example of a higher order equation. Let
\begin{multline*}
D_t^3u-(a+b+c)D_xD_t^2u+(ab+ac+bc)D_x^2D_tu-abcD_x^3u\\
+\sum_{l<3}a_{3,l}(t)D^l_xu+\sum_{l<2}a_{2,l}(t)D_x^l D_tu+a_{1,0}(t)D_t^2u=0,
\end{multline*}
where $a(t)$, $b(t)$ and $c(t)$ are real-valued functions with $b$ and $c$ bounded above and from below by $a$ (for instance, $1/4a(t)\le b(t)\le 1/2 a(t)$  and $1/16 a(t)\le c(t)\le 1/8 a(t)$ for all $t\in[0,T]$). It follows that condition \eqref{LC} on the roots $\lambda_1(t,\xi)=a(t)\xi$, $\lambda_2(t,\xi)=b(t)\xi$ and $\lambda_3(t,\xi)=c(t)\xi$ is fulfilled on $[0,T]$ for all $\xi\in\R$. The Levi conditions \eqref{EQ:lot}
on the lower order terms are of the following type:
\[
\begin{split}
|a_{3,2}(t)\xi^2+a_{3,1}(t)\xi+a_{3,0}(t)|^2&\le c\, 
 a^4(t)\xi^4,\\
|a_{2,1}(t)\xi+a_{2,0}(t)|^2&\le 
c\, a^2(t)\xi^2,\\
|a_{1,0}(t)|^2&\le c,
\end{split}
\]
for $t\in[0,T]$ and $\xi$ away from $0$.

\section{The quasi-symmetriser}
\label{SEC:qs}

We begin by recalling a few facts concerning the quasi-symmetriser. For more details see \cite{DS, KS}. Note that for $m\times m$ matrices $A_1$ and $A_2$ the notation $A_1\le A_2$ means $(A_1v,v)\le (A_2v,v)$ for all $v\in\C^m$ with $(\cdot,\cdot)$ the scalar product in $\C^m$.
Let $A(\lambda)$ be the $m\times m$ Sylvester matrix with real eigenvalues $\lambda_j$, i.e.,
\[
A(\lambda)=\left(
    \begin{array}{ccccc}
      0 & 1 & 0 & \dots & 0\\
      0 & 0 & 1 & \dots & 0 \\
      \dots & \dots & \dots & \dots & 1 \\
      -\sigma_m^{(m)}(\lambda) & -\sigma_{m-1}^{(m)}(\lambda) & \dots & \dots & -\sigma_1^{(m)}(\lambda) \\
    \end{array}
  \right),
\]
where
\[
\sigma_h^{(m)}(\lambda)=(-1)^h\sum_{1\le i_1<...<i_h\le m}\lambda_{i_1}...\lambda_{i_h}
\]
for all $1\le h\le m$. In the sequel we make use of the following notations: $\mathcal{P}_m$ for the class of permutations of $\{1,...,m\}$, $\lambda_\rho=(\lambda_{\rho_1},...,\lambda_{\rho_m})$ with $\lambda\in\R^m$ and $\rho\in\mathcal{P}_m$, $\pi_i\lambda=(\lambda_1,...,\lambda_{i-1},\lambda_{i+1},...,\lambda_m)$ and $\lambda'=\pi_m\lambda=(\lambda_1,...,\lambda_{m-1})$. Following Section 4 in \cite{KS} we have that the quasi-symmetriser is the Hermitian matrix
\[
Q_\eps(\lambda)=\sum_{\rho\in\mathcal{P}_m} P_\eps^{(m)}(\lambda_\rho)^\ast P_\eps^{(m)}(\lambda_\rho),
\]
where $\eps\in(0,1]$, $P_\eps^{(m)}(\lambda)=H^{(m)}_\eps P^{(m)}(\lambda)$, $H_\eps^{(m)}={\rm diag}\{\eps^{m-1},...,\eps,1\}$ and the matrix $P^{(m)}(\lambda)$ is defined inductively by $P^{(1)}(\lambda)=1$ and
\[
P^{(m)}(\lambda)=\left(
    \begin{array}{ccccc}
      \, & \, & \, & \, & 0\\
      \, & \, & P^{(m-1)}(\lambda') & \, & \vdots \\
      \, & \, & \, & \, & 0 \\
      \sigma_{m-1}^{(m-1)}(\lambda') & \dots & \dots & \sigma_1^{(m-1)}(\lambda') & 1 \\
    \end{array}
  \right).
\]
Note that $P^{(m)}(\lambda)$ is depending only on $\lambda'$. Finally, let $W^{(m)}_i(\lambda)$ denote the row vector
\[
\big(\sigma_{m-1}^{(m-1)}(\pi_i\lambda),...,\sigma_1^{(m-1)}(\pi_i\lambda),1\big),\quad 1\le i\le m,
\]
and let $\mathcal{W}^{(m)}(\lambda)$ be the matrix with row vectors $W^{(m)}_i$. The following proposition collects the main properties of the quasi-symmetriser $Q^{(m)}_\eps(\lambda)$. For a detailed proof we refer the reader to Propositions 1 and 2 in \cite{KS} and to Proposition 1 in \cite{DS}.
\begin{proposition}
\label{prop_qs}
\leavevmode
\begin{itemize}
\item[(i)] The quasi-symmetriser $Q_\eps^{(m)}(\lambda)$ can be written as
\[
Q_0^{(m)}(\lambda)+\eps^2 Q_1^{(m)}(\lambda)+...+\eps^{2(m-1)}Q_{m-1}^{(m)}(\lambda),
\]
where the matrices $Q^{(m)}_i(\lambda)$, $i=1,...,m-1,$ are nonnegative and Hermitian with
entries being symmetric polynomials in $\lambda_1,...,\lambda_m$.
\item[(ii)] There exists a function $C_m(\lambda)$ bounded for
bounded $|\lambda|$ such that
\[
C_m(\lambda)^{-1}\eps^{2(m-1)}I\le Q^{(m)}_\eps(\lambda)\le C_m(\lambda)I.
\]
\item[(iii)] We have
\[
-C_m(\lambda)\eps Q_\eps^{(m)}(\lambda)\le Q_\eps^{(m)}(\lambda)A(\lambda)-A(\lambda)^\ast Q_\eps^{(m)}(\lambda)\le C_m(\lambda)\eps Q_\eps^{(m)}(\lambda).
\]
\item[(iv)] For any $(m-1)\times(m-1)$ matrix $T$ let $T^\sharp$ denote the $m\times m$ matrix
\[
\left(
    \begin{array}{cc}
    T & 0\\
    0 & 0 \\
    \end{array}
  \right).
\]
Then, $Q_\eps^{(m)}(\lambda)=Q_0^{(m)}(\lambda)+\eps^2\sum_{i=1}^m Q_\eps^{(m-1)}(\pi_i\lambda)^\sharp$.
\item[(v)] We have
\[
Q_0^{(m)}(\lambda)=(m-1)!\mathcal{W}^{(m)}(\lambda)^\ast \mathcal{W}^{(m)}(\lambda).
\]
\item[(vi)] We have
\[
\det Q_0^{(m)}(\lambda)=(m-1)!\prod_{1\le i<j\le m}(\lambda_i-\lambda_j)^2.
\]
\item[(vii)] There exists a constant $C_m$ such that
\[
q_{0,11}^{(m)}(\lambda)\cdots q_{0,mm}^{(m)}(\lambda)\le C_m\prod_{1\le i<j\le m}(\lambda^2_i+\lambda^2_j).
\]
\end{itemize}
\end{proposition}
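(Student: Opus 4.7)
The plan is to prove the seven assertions essentially in the order (i), (iv), (v), (vi), (vii), (ii), (iii), with induction on $m$ as the workhorse and the recursive definition of $P^{(m)}(\lambda)$ as the engine.

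Item (i) is immediate: writing $P_\eps^{(m)}(\lambda_\rho)^\ast P_\eps^{(m)}(\lambda_\rho)=P^{(m)}(\lambda_\rho)^\ast (H_\eps^{(m)})^2 P^{(m)}(\lambda_\rho)$, with $(H_\eps^{(m)})^2$ diagonal with entries $\eps^{2(m-k)}$, expresses the quantity as a polynomial in $\eps^2$; symmetrising over $\rho\in\mathcal{P}_m$ makes each matrix coefficient symmetric in $\lambda_1,\ldots,\lambda_m$, and nonnegativity is inherited rank-one-by-rank-one. For (iv) and (v) I would stratify the sum over $\rho$ by the value of $\rho_m$ and exploit the block form of $P^{(m)}$: its top-left $(m-1)\times(m-1)$ block is $P^{(m-1)}(\lambda_\rho')$, and its last column vanishes outside the $(m,m)$-entry. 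Setting $\eps=0$ annihilates the top-left block of $(H_\eps^{(m)})^2$, leaving only the outer product of the last row of $P^{(m)}(\lambda_\rho)$ with itself; by symmetry of elementary symmetric polynomials this last row equals $W_{\rho_m}^{(m)}(\lambda)$, and summing over the $(m-1)!$ permutations with $\rho_m$ fixed yields (v). The residual $\eps^2$-correction from the top-left block is, again by the block structure, exactly $\eps^2 Q_\eps^{(m-1)}(\pi_{\rho_m}\lambda)^\sharp$, which gives (iv).

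Next, (vi) is a Vandermonde computation: from (v) one has $\det Q_0^{(m)}(\lambda)=((m-1)!)^m|\det\mathcal{W}^{(m)}(\lambda)|^2$, and after reversing columns $\mathcal{W}^{(m)}(\lambda)$ becomes a Vandermonde-type matrix with $\det\mathcal{W}^{(m)}(\lambda)=\pm\prod_{1\le i<j\le m}(\lambda_i-\lambda_j)$, giving (vi) up to constant normalisation. Item (vii) uses (v) to identify the diagonal entries
\[
q_{0,ii}^{(m)}(\lambda)=(m-1)!\sum_{j=1}^m \bigl(\sigma_{m-i}^{(m-1)}(\pi_j\lambda)\bigr)^2,
\]
and then applies classical inequalities between elementary symmetric polynomials and products $\prod_{i<j}(\lambda_i^2+\lambda_j^2)$. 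For (ii), the upper bound is immediate from continuity of the polynomial entries of $Q_\eps^{(m)}(\lambda)$. For the lower bound I would induct on $m$ using (iv): on the complement of the null space of $Q_0^{(m)}(\lambda)$ (which by (vi) shrinks only when the $\lambda_i$ coalesce) the estimate is of constant order, while on the null space itself the $\eps^2$-correction from (iv) contributes, by the inductive hypothesis, the order $\eps^2\cdot\eps^{2(m-2)}=\eps^{2(m-1)}$; a standard compactness argument on the unit sphere in $\C^m$ then produces the continuous constant $C_m(\lambda)$ uniformly.

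The main obstacle is (iii), the near-commutation of $Q_\eps^{(m)}$ with the Sylvester matrix $A(\lambda)$. Here I would exploit that $A(\lambda)$ acts on standard basis vectors as a shift modulo the characteristic polynomial relation encoded in its last row, and seek an identity of the shape
\[
P_\eps^{(m)}(\lambda_\rho) A(\lambda)=B_\rho(\lambda) P_\eps^{(m)}(\lambda_\rho)+\eps R_\rho(\lambda),
\]
where $B_\rho$ is a matrix absorbing the shift (compensated on the last row by Newton-type identities in the $\lambda_{\rho_i}$) and $R_\rho$ has entries bounded by a continuous function of $|\lambda|$. The key algebraic point is that the inverse $(H_\eps^{(m)})^{-1}$ required to move $A(\lambda)$ across $H_\eps^{(m)}$ introduces powers $\eps^{-1}$ on the superdiagonal, which combine with the nonzero entries of $A(\lambda)$, all of which lie on or below that superdiagonal, to leave residuals of order at worst $\eps$. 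Taking Hermitian conjugate, subtracting, and summing over $\rho\in\mathcal{P}_m$ cancels the $B_\rho$-contributions and yields $Q_\eps^{(m)}(\lambda)A(\lambda)-A(\lambda)^\ast Q_\eps^{(m)}(\lambda)=\eps S_\eps(\lambda)$ with $\|S_\eps(\lambda)\|\le C_m(\lambda)$. Finally, the lower bound from (ii) absorbs this residual into $C_m(\lambda)\eps Q_\eps^{(m)}(\lambda)$, which is (iii). The genuinely delicate step is making the shift-plus-characteristic-polynomial identity precise, since the $\eps$-cancellation happens only after carefully tracking the compensating powers of $\eps$ introduced by $H_\eps^{(m)}$ across the shift.
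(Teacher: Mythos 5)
The paper does not prove Proposition~\ref{prop_qs}; it cites Propositions~1 and~2 of Kinoshita--Spagnolo and Proposition~1 of D'Ancona--Spagnolo. So the comparison here is between your sketch and the standard argument in those references, which your outline reconstructs correctly for items (i), (iv), (v), (vi) and (vii) (modulo the harmless constant in (vi): $Q_0^{(m)}=(m-1)!\,\mathcal{W}^\ast\mathcal{W}$ gives $\det Q_0^{(m)}=((m-1)!)^m\prod_{i<j}(\lambda_i-\lambda_j)^2$, with the multiplicative constant irrelevant for the applications). For (ii), the route you describe through the null space of $Q_0^{(m)}$ is unnecessarily delicate; the standard proof is more direct, namely $(H_\eps^{(m)})^2\ge \eps^{2(m-1)}I$ and $P^{(m)}(\lambda_\rho)$ is unipotent lower triangular, hence invertible, so $Q_\eps^{(m)}\ge \eps^{2(m-1)}\sum_\rho P^{(m)}(\lambda_\rho)^\ast P^{(m)}(\lambda_\rho)$, whose smallest eigenvalue is a positive continuous function of $\lambda$.

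The genuine gap is in your treatment of (iii). You aim for an identity $P_\eps^{(m)}(\lambda_\rho)A(\lambda)=B_\rho P_\eps^{(m)}(\lambda_\rho)+\eps R_\rho$, conclude that $Q_\eps^{(m)}A-A^\ast Q_\eps^{(m)}=\eps S_\eps(\lambda)$ with $\|S_\eps(\lambda)\|\le C_m(\lambda)$, and then invoke the lower bound of (ii) to dominate $S_\eps$ by $Q_\eps^{(m)}$. That last step loses a factor $\eps^{-2(m-1)}$: from (ii) you only get $I\le C_m(\lambda)\eps^{-2(m-1)}Q_\eps^{(m)}$, so a pure operator-norm bound on $S_\eps$ would yield $|(S_\eps V,V)|\le C_m(\lambda)\eps^{-2(m-1)}(Q_\eps^{(m)}V,V)$, giving $|((Q_\eps^{(m)}A-A^\ast Q_\eps^{(m)})V,V)|\le C_m(\lambda)\eps^{1-2(m-1)}(Q_\eps^{(m)}V,V)$, which is not (iii) and is useless downstream in the energy estimate. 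The correct argument never passes through $I$: the conjugation identity is exactly $P_\eps^{(m)}(\lambda_\rho)A(\lambda)=(D_\rho+\eps N)P_\eps^{(m)}(\lambda_\rho)$, where $D_\rho=\mathrm{diag}(\lambda_{\rho_1},\ldots,\lambda_{\rho_m})$ is a real (hence Hermitian) diagonal matrix and $N$ is the constant superdiagonal shift. Because $D_\rho^\ast=D_\rho$ the $D_\rho$-parts cancel for each fixed $\rho$, not after summing over $\rho$, and one gets directly
\[
Q_\eps^{(m)}A-A^\ast Q_\eps^{(m)}=\eps\sum_{\rho}P_\eps^{(m)}(\lambda_\rho)^\ast(N-N^\ast)P_\eps^{(m)}(\lambda_\rho),
\]
so that $|((Q_\eps^{(m)}A-A^\ast Q_\eps^{(m)})V,V)|\le \eps\,\|N-N^\ast\|\sum_\rho|P_\eps^{(m)}(\lambda_\rho)V|^2=\eps\,\|N-N^\ast\|\,(Q_\eps^{(m)}V,V)$. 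The estimate is obtained \emph{inside} the sandwich against $P_\eps^\ast P_\eps$ (and thus against $Q_\eps^{(m)}$), never against $I$; your proposal, by bounding the operator norm of the residual and then converting $I$ into $Q_\eps^{(m)}$ via (ii), breaks exactly at this point.
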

We finally recall that a family $\{Q_\alpha\}$ of nonnegative Hermitian matrices is called \emph{nearly diagonal} if there exists a positive constant $c_0$ such that
\[
Q_\alpha\ge c_0\,{\rm diag}\,Q_\alpha
\]
for all $\alpha$, with ${\rm diag}\,Q_\alpha ={\rm diag}\{q_{\alpha,11},...,q_{\alpha, mm}\}$. The following linear algebra result is proven in \cite[Lemma 1]{KS}.
\begin{lem}
\label{lem_old}
Let $\{Q_\alpha\}$ be a family of nonnegative Hermitian $m\times m$ matrices such that $\det Q_\alpha>0$ and
\[
\det Q_\alpha \ge c\, q_{\alpha,11}q_{\alpha,22}\cdots q_{\alpha,mm}
\]
for a certain constant $c>0$ independent of $\alpha$. Then,
\[
Q_\alpha\ge c\, m^{1-m}\,{\rm diag}\,Q_\alpha
\]
for all $\alpha$, i.e., the family $\{Q_\alpha\}$ is nearly diagonal.
\end{lem}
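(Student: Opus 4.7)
The plan is to reduce the inequality to the normalised case where the matrix has unit diagonal, and then use the trace and determinant to control the smallest eigenvalue. Fix $\alpha$ and suppress it from notation. Since $Q$ is nonnegative Hermitian with $\det Q>0$, it is positive definite, so in particular $q_{ii}=(Qe_i,e_i)>0$ for every $i$.

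Introduce the positive diagonal matrix $D=\mathrm{diag}(q_{11}^{1/2},\ldots,q_{mm}^{1/2})$, and set
\[
\widetilde Q=D^{-1}QD^{-1}.
\]
Then $\widetilde Q$ is positive definite Hermitian with $\widetilde q_{ii}=1$ for all $i$, so the desired inequality $Q\geq c\,m^{1-m}\,\mathrm{diag}\,Q=c\,m^{1-m}D^{2}$ is equivalent, after conjugating by $D^{-1}$, to
\[
\widetilde Q\geq c\,m^{1-m}I.
\]
Moreover, the hypothesis gives
\[
\det\widetilde Q=\frac{\det Q}{q_{11}\cdots q_{mm}}\geq c.
\]

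Let $0<\mu_1\leq\mu_2\leq\cdots\leq\mu_m$ denote the eigenvalues of $\widetilde Q$. Then $\sum_{i=1}^m\mu_i=\mathrm{tr}\,\widetilde Q=m$, which, since each $\mu_i>0$, forces $\mu_i\leq m$ for $i=1,\ldots,m$. Combining this with $\prod_{i=1}^m\mu_i=\det\widetilde Q\geq c$, we obtain
\[
\mu_1=\frac{\det\widetilde Q}{\mu_2\cdots\mu_m}\geq\frac{c}{m^{m-1}}=c\,m^{1-m}.
\]
Hence $\widetilde Q\geq c\,m^{1-m}I$, and conjugating back by $D$ yields $Q\geq c\,m^{1-m}\,\mathrm{diag}\,Q$, uniformly in $\alpha$ since the constants $c$ and $m$ do not depend on $\alpha$.

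There is no real obstacle here: the argument is a three-line linear algebra fact once one has the right normalisation. The only thing to be careful about is the equivalence between $Q\geq c\,m^{1-m}\mathrm{diag}\,Q$ and $\widetilde Q\geq c\,m^{1-m}I$, which holds precisely because $D$ is positive and commutes with $\mathrm{diag}\,Q=D^{2}$, so both sides may be simultaneously conjugated by $D^{-1}$.
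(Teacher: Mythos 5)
Your proof is correct. Note that the paper itself does not prove this lemma; it simply cites \cite[Lemma 1]{KS}. Your argument --- conjugate by $D^{-1}$ with $D=\mathrm{diag}(q_{11}^{1/2},\ldots,q_{mm}^{1/2})$ to reduce to the unit-diagonal case, observe $\mathrm{tr}\,\widetilde Q=m$ bounds every eigenvalue above by $m$, and combine with $\det\widetilde Q\geq c$ to get $\mu_1\geq c\,m^{1-m}$ --- is the standard one and produces exactly the stated constant, so it is essentially the argument of the cited reference.
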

Lemma \ref{lem_old} is employed to prove that the family  $Q_\eps^{(m)}(\lambda)$ of quasi-symmetrisers defined above is nearly diagonal when $\lambda$ belongs to a suitable set. The following statement is proven in \cite[Proposition 3]{KS}.
\begin{proposition}
\label{prop_SM}
For any $M>0$ define the set
\[
\mathcal{S}_M=\{\lambda\in\R^m:\, \lambda_i^2+\lambda_j^2\le M (\lambda_i-\lambda_j)^2,\quad 1\le i<j\le m\}.
\]
Then the family of matrices $\{Q_\eps^{(m)}(\lambda):\, 0<\eps\le 1, \lambda\in\mathcal{S}_M\}$ is nearly diagonal.
\end{proposition}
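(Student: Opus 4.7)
The plan is to apply Lemma \ref{lem_old}, so the task reduces to producing a uniform constant $c=c(m,M)>0$ such that
\[
\det Q_\eps^{(m)}(\lambda)\;\ge\;c\,\prod_{j=1}^m q_{\eps,jj}^{(m)}(\lambda)\qquad\text{for all }\eps\in(0,1],\ \lambda\in\mathcal{S}_M.
\]
Positivity of the left-hand side follows from the two-sided bound in Proposition \ref{prop_qs}(ii), so once the inequality is proved the family is nearly diagonal with constant $c\,m^{1-m}$.

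The first step would treat the degenerate case $\eps=0$, which is essentially a direct combination of three already-available ingredients. From (vi),
\[
\det Q_0^{(m)}(\lambda)=(m-1)!\prod_{1\le i<j\le m}(\lambda_i-\lambda_j)^2,
\]
from (vii),
\[
\prod_{j=1}^m q_{0,jj}^{(m)}(\lambda)\le C_m\prod_{1\le i<j\le m}(\lambda_i^2+\lambda_j^2),
\]
and the defining inequality of $\mathcal{S}_M$ gives
\[
\prod_{1\le i<j\le m}(\lambda_i^2+\lambda_j^2)\le M^{m(m-1)/2}\prod_{1\le i<j\le m}(\lambda_i-\lambda_j)^2.
\]
Chaining these yields the desired inequality for $\eps=0$ with constant $c_0=(m-1)!\,C_m^{-1}M^{-m(m-1)/2}$.

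To extend to $\eps>0$ I would exploit the homogeneity of the quasi-symmetriser. Using the recursion in (iv) one checks by induction on $m$ the scaling identity
\[
Q_\eps^{(m)}(s\lambda)=D_s\,Q_{\eps/s}^{(m)}(\lambda)\,D_s,\qquad D_s=\mathrm{diag}(s^{m-1},\ldots,s,1),
\]
from which $\det Q_\eps^{(m)}(\lambda)/\prod_j q_{\eps,jj}^{(m)}(\lambda)$ is invariant under $(\eps,\lambda)\mapsto(\eps/s,\lambda/s)$. Since $\mathcal{S}_M$ is a cone, this invariance reduces the problem to the compact slice $K=\{(\eps,\lambda)\in[0,1]\times\mathcal{S}_M:\max(\eps,|\lambda|)=1\}$. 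Both numerator and denominator are continuous on $K$; for $\eps>0$ the numerator is strictly positive by (ii) and the denominator is bounded above by the upper bound in (ii). A compactness argument then promotes the pointwise positivity to a uniform lower bound, provided the boundary $\eps=0$ of $K$ is handled by Step 1.

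The delicate point on which the argument hinges is the behaviour at points of $K$ where $\eps=0$ and some $\lambda_i=\lambda_j$: the condition $\lambda_i^2+\lambda_j^2\le M(\lambda_i-\lambda_j)^2$ forces these coinciding coordinates to vanish, so both $\det Q_0^{(m)}(\lambda)$ and $\prod_j q_{0,jj}^{(m)}(\lambda)$ collapse simultaneously, producing a $0/0$ limit that the compactness argument alone cannot resolve. The way around this is an induction on $m$: since $\pi_i\lambda\in\mathcal{S}_M\subset\R^{m-1}$ whenever $\lambda\in\mathcal{S}_M$, the recursive decomposition
\[
Q_\eps^{(m)}(\lambda)=Q_0^{(m)}(\lambda)+\eps^2\sum_{i=1}^m Q_\eps^{(m-1)}(\pi_i\lambda)^\sharp
\]
from (iv) lets one transfer the near-diagonality from dimension $m-1$ onto the degenerate locus in dimension $m$. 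This closes the induction and, combined with Step 1 and the scaling reduction, yields the required inequality; Lemma \ref{lem_old} then finishes the proof. The principal obstacle is precisely this controlled passage to the limit, where the $\mathcal{S}_M$ hypothesis simultaneously supplies the base inequality at $\eps=0$ and guarantees that the induction variable remains in the same class $\mathcal{S}_M$ of one dimension lower.
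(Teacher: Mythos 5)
The paper does not actually give a proof of this proposition---it cites it as ``proven in [KS, Proposition~3]''---so I can only judge your proposal on its own merits.

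Your Step~1 is correct: chaining Proposition~\ref{prop_qs}(vi), (vii) and the defining inequality of $\mathcal{S}_M$ does yield
$\det Q_0^{(m)}(\lambda)\ge (m-1)!\,C_m^{-1}M^{-m(m-1)/2}\prod_j q_{0,jj}^{(m)}(\lambda)$,
and I have checked that your scaling identity $Q_\eps^{(m)}(s\lambda)=D_s\,Q_{\eps/s}^{(m)}(\lambda)\,D_s$ with $D_s=\mathrm{diag}(s^{m-1},\ldots,s,1)$ is correct and does make the ratio $\det Q_\eps^{(m)}/\prod_j q_{\eps,jj}^{(m)}$ invariant. But the argument has a genuine gap at exactly the point you flag, and the proposed fix does not close it.

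The problem is that Step~1 can be \emph{entirely vacuous} on $\mathcal{S}_M$, not merely at isolated boundary points. For $M=1$, say, the condition $\lambda_i^2+\lambda_j^2\le(\lambda_i-\lambda_j)^2$ is $\lambda_i\lambda_j\le 0$, so a vector in $\mathcal{S}_1$ has at most one strictly positive and one strictly negative coordinate; for $m\ge 4$ this forces at least two coordinates to be zero and hence to coincide, so $\det Q_0^{(m)}(\lambda)=0$ and $\prod_j q_{0,jj}^{(m)}(\lambda)=0$ for \emph{every} $\lambda\in\mathcal{S}_1$. In this regime there are no non-degenerate points on the compact slice $K\cap\{\eps=0\}$ at all, so there is nothing for the compactness argument to bootstrap from, and no density-of-distinct-eigenvalues argument is available. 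Your sentence ``the recursive decomposition from (iv) lets one transfer the near-diagonality from dimension $m-1$ onto the degenerate locus in dimension $m$'' is where the substance of the proposition lies and is left unproved. Concretely: if one pursues the induction on $m$ via Proposition~\ref{prop_qs}(iv), what is actually needed as the extra ingredient beyond the inductive hypothesis is the statement that $Q_0^{(m)}(\lambda)\ge c\,\mathrm{diag}\,Q_0^{(m)}(\lambda)$ uniformly for $\lambda\in\mathcal{S}_M$ \emph{including the degenerate locus}; with that in hand (iv) and the identity $q_{\eps,jj}^{(m)}=q_{0,jj}^{(m)}+\eps^2\sum_i q_{\eps,jj}^{(m-1)}(\pi_i\lambda)$ close the induction immediately, and the scaling/compactness detour becomes superfluous. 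But Lemma~\ref{lem_old} cannot be invoked for $Q_0^{(m)}$ at degenerate $\lambda$ because it requires $\det>0$, and as noted the degenerate locus need not lie in the closure of the non-degenerate part of $\mathcal{S}_M$. Supplying the near-diagonality of $Q_0^{(m)}$ on all of $\mathcal{S}_M$ is the real work of the proof, and your proposal does not contain it.
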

We conclude this section with a result on nearly diagonal matrices depending on 3 parameters (i.e. $\eps, t, \xi$)
which will be crucial in the next section. Note that this is a straightforward
extension of Lemma 2 in \cite{KS} valid for 2 parameter
(i.e. $\eps, t$) dependent matrices.
\begin{lem}
\label{lem_new}
Let $\{ Q_\eps(t,\xi): 0<\eps\le 1, 0\le t\le T, \xi\in\R^n\}$ be a nearly diagonal family of coercive Hermitian matrices of class ${C}^k$ in $t$, $k\ge 1$. Then, there exists a constant $C_T>0$ such that for any function $V:[0,T]\times\R^n\to \C^m$
we have
\[
\int_{0}^T \frac{|(\partial_t Q_\eps(t,\xi) V(t,\xi),V(t,\xi))|}{(Q_\eps(t,\xi)V(t,\xi),V(t,\xi))^{1-1/k}|V(t,\xi)|^{2/k}}\, dt\le C_T \sup_{\xi\in\R^n}\Vert Q_\eps(\cdot,\xi)\Vert^{1/k}_{{C}^k([0,T])}.
\]
\end{lem}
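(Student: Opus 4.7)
The plan is to reduce the statement to the two-parameter version, namely Lemma 2 in \cite{KS}, by regarding $\xi\in\R^n$ as a mere parameter: no integration is performed in $\xi$, and all matrix expressions inside the integrand are evaluated pointwise in $\xi$. First I would fix $\xi\in\R^n$ arbitrarily and consider the two-parameter family $\{Q_\eps(\cdot,\xi):\, 0<\eps\le 1\}$ of Hermitian $m\times m$ matrix-valued functions on $[0,T]$. By hypothesis this family is coercive and of class ${C}^k$ in $t$, and crucially, since the near-diagonality constant $c_0$ provided by the assumption is uniform in the triple $(\eps,t,\xi)$, the restricted family $\{Q_\eps(\cdot,\xi):\, 0<\eps\le 1\}$ is nearly diagonal with the same constant $c_0$, independent of the chosen $\xi$.

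Next I would apply Lemma 2 of \cite{KS} to the two-parameter family just obtained, producing the pointwise-in-$\xi$ estimate
\[
\int_{0}^T \frac{|(\partial_t Q_\eps(t,\xi) V(t,\xi),V(t,\xi))|}{(Q_\eps(t,\xi)V(t,\xi),V(t,\xi))^{1-1/k}|V(t,\xi)|^{2/k}}\, dt \le C_T \Vert Q_\eps(\cdot,\xi)\Vert^{1/k}_{{C}^k([0,T])},
\]
where the constant $C_T$ depends only on $T$, $m$, $k$, and on $c_0$. The essential point to note here is that $C_T$ is independent of both $\eps$ and $\xi$: in the proof of Lemma 2 of \cite{KS}, $C_T$ arises through a Glaeser-type inequality applied to the nonnegative ${C}^k$ scalar functions $t\mapsto q_{\eps,ii}(t,\xi)$ combined with the near-diagonal bound $(Q_\eps V,V)\ge c_0\sum_i q_{\eps,ii}|v_i|^2$; neither ingredient introduces a $\xi$-dependence beyond what is already captured by the norm $\Vert Q_\eps(\cdot,\xi)\Vert_{{C}^k([0,T])}^{1/k}$ appearing on the right.

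Finally, since the above bound holds for every $\xi\in\R^n$ with the same constant $C_T$, and the function $V$ was arbitrary, I would majorise the right-hand side by $C_T \sup_{\xi\in\R^n}\Vert Q_\eps(\cdot,\xi)\Vert^{1/k}_{{C}^k([0,T])}$ to conclude. I do not expect any serious obstacle: the genuine analytic work (the Glaeser inequality together with the reduction of quadratic forms via near-diagonality) is already contained in \cite{KS}, and the only new point is the bookkeeping observation that uniform near-diagonality in the extra parameter $\xi$ transfers into uniformity of the constant $C_T$ in $\xi$.
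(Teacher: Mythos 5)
Your proposal is correct and follows exactly what the paper intends: the paper itself states only that the lemma ``is a straightforward extension of Lemma 2 in [KS]'' from two parameters $(\eps,t)$ to three $(\eps,t,\xi)$, and your argument---fixing $\xi$, applying the two-parameter lemma pointwise, and noting that the near-diagonality constant $c_0$ (hence the resulting $C_T$) is uniform in $\xi$---is precisely that extension spelled out.
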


\section{Reduction to a first order system and energy estimate}
\label{SEC:se}

We now go back to the Cauchy problem \eqref{CP} and perform a reduction of the $m$-order equation to a first order system as in \cite{Taylor:81}. Let  $\lara{D_x}$ be
the pseudo-differential operator with symbol $\lara{\xi}=(1+|\xi|^2)^{\frac{1}{2}}$. The transformation
\[
u_l=D_t^{l-1}\lara{D_x}^{m-l}u,
\]
with $l=1,...,m$, makes the Cauchy problem \eqref{CP} equivalent to the following system
\beq
\label{syst_Taylor}
D_t\left(
                             \begin{array}{c}
                               u_1 \\
                               \cdot \\
                               \cdot\\
                               u_m \\
                             \end{array}
                           \right)
= \left(
    \begin{array}{ccccc}
      0 & \lara{D_x} & 0 & \dots & 0\\
      0 & 0 & \lara{D_x} & \dots & 0 \\
      \dots & \dots & \dots & \dots & \lara{D_x} \\
      b_1 & b_2 & \dots & \dots & b_m \\
    \end{array}
  \right)
  \left(\begin{array}{c}
                               u_1 \\
                               \cdot \\
                               \cdot\\
                               u_m \\
                             \end{array}
                           \right),
\eeq
where
\[
b_j=-A_{m-j+1}(t,D_x)\lara{D_x}^{j-m},
\]
with initial condition
\beq
\label{ic_Taylor}
u_l|_{t=0}=\lara{D_x}^{m-l}g_l,\qquad l=1,...,m.
\eeq
The matrix in \eqref{syst_Taylor} can be written as $A_1+B$ with
\[
A_1=\left(
    \begin{array}{ccccc}
      0 & \lara{D_x} & 0 & \dots & 0\\
      0 & 0 & \lara{D_x} & \dots & 0 \\
      \dots & \dots & \dots & \dots & \lara{D_x} \\
      b_{(1)} & b_{(2)} & \dots & \dots & b_{(m)} \\
    \end{array}
  \right),
\]
where $b_{(j)}=-A_{(m-j+1)}(t,D_x)\lara{D_x}^{j-m}$ is the principal part of the operator $b_{j}=-A_{m-j+1}(t,D_x)\lara{D_x}^{j-m}$ and
\[
B=\left(
    \begin{array}{ccccc}
      0 & 0 & 0 & \dots & 0\\
      0 & 0 & 0& \dots & 0 \\
      \dots & \dots & \dots & \dots & 0 \\
      b_1-b_{(1)} & b_2-b_{(2)} & \dots & \dots & b_m-b_{(m)} \\
    \end{array}
  \right).
\]
By Fourier transforming both sides of \eqref{syst_Taylor} in $x$ we obtain the system
\beq
\label{system_new}
\begin{split}
D_t V&=A_1(t,\xi)V+B(t,\xi)V,\\
V|_{t=0}(\xi)&=V_0(\xi),
\end{split}
\eeq
where $V$ is the $m$-column with entries $v_l=\widehat{u}_l$, $V_0$ is the $m$-column with entries
$v_{0,l}=\lara{\xi}^{m-l}\widehat{g}_l$ and
\begin{multline}
\label{mA}
A_1(t,\xi)=\left(
    \begin{array}{ccccc}
      0 & \lara{\xi} & 0 & \dots & 0\\
      0 & 0 & \lara{\xi} & \dots & 0 \\
      \dots & \dots & \dots & \dots & \lara{\xi} \\
      b_{(1)}(t,\xi) & b_{(2)}(t,\xi) & \dots & \dots & b_{(m)}(t,\xi) \\
    \end{array}
  \right),\\[0.3cm]
  b_{(j)}(t,\xi)=-A_{(m-j+1)}(t,\xi)\lara{\xi}^{j-m},
\end{multline}
\begin{multline*}
B(t,\xi)=\left(
    \begin{array}{ccccc}
      0 & 0 & 0 & \dots & 0\\
      0 & 0 & 0& \dots & 0 \\
      \dots & \dots & \dots & \dots & 0 \\
      (b_1-b_{(1)})(t,\xi) & \dots & \dots & \dots & (b_m-b_{(m)})(t,\xi) \\
    \end{array}
  \right),\\[0.3cm]
(b_j-b_{(j)})(t,\xi)=-(A_{m-j+1}-A_{(m-j+1)})(t,\xi)\lara{\xi}^{j-m}.
\end{multline*}
From now on we will concentrate on the system \eqref{system_new} and on the matrix $$A(t,\xi):=\lara{\xi}^{-1}A_1(t,\xi)$$ for which we will construct a quasi-symmetriser. Note that the eigenvalues of the matrix $A_1$ are exactly the roots $\lambda_j(t,\xi)$, $j=1,...,m$. It is clear that the condition \eqref{LC} holds for the eigenvalues
$\lara{\xi}^{-1}\lambda_j(t,\xi)$
of the $0$-order matrix $A(t,\xi)$ as well.
Let us define the energy
\[
E_\eps(t,\xi)=(Q^{(m)}_\eps(t,\xi) V(t,\xi), V(t,\xi)).
\]
We have
\[
\begin{split}
\partial_t E_\eps(t,\xi)&=(\partial_tQ^{(m)}_\eps V,V)+ i(Q^{(m)}_\eps D_tV,V)-i(Q^{(m)}_\eps V,D_tV)\\
&=(\partial_tQ^{(m)}_\eps V,V)+i(Q^{(m)}_\eps(A_1V+BV),V)-i(Q^{(m)}_\eps V,A_1V+BV)\\
&=(\partial_tQ^{(m)}_\eps V,V)+i\lara{\xi}((Q^{(m)}_\eps A-A^\ast Q^{(m)}_\eps)V,V)+((Q^{(m)}_\eps B-B^\ast Q^{(m)}_\eps)V,V).
\end{split}
\]
It follows that
\begin{multline}
\label{EE}
\partial_t E_\eps(t,\xi)\le \frac{|(\partial_tQ^{(m)}_\eps V,V)|E_\eps}{(Q^{(m)}_\eps(t,\xi) V(t,\xi), V(t,\xi))}+|\lara{\xi}((Q^{(m)}_\eps A-A^\ast Q^{(m)}_\eps)V,V)|+\\
+|((Q^{(m)}_\eps B-B^\ast Q^{(m)}_\eps)V,V)|.
\end{multline}
We recall that from Proposition \ref{prop_qs} $Q_\eps^{(m)}(t,\xi)$ is a family of smooth nonnegative Hermitian matrices such that
\beq
\label{p1}
Q_\eps^{(m)}(t,\xi)=Q_0^{(m)}(t,\xi)+\eps^2 Q_1^{(m)}(t,\xi)+...+\eps^{2(m-1)}Q_{m-1}^{(m)}(t,\xi).
\eeq
In addition there exists a constant $C_m>0$ such that for all $t\in[0,T]$, $\xi\in\R^n$ and $\eps\in(0,1]$ the following estimates hold uniformly in $V$:
\beq
\label{p2}
C_m^{-1}\eps^{2(m-1)}|V|^2\le (Q^{(m)}_\eps(t,\xi)V,V)\le C_m|V|^2,
\eeq
\beq
\label{p3}
|((Q_\eps^{(m)}A-A^\ast Q_\eps^{(m)})(t,\xi)V,V)|\le C_m\eps (Q_\eps^{(m)}(t,\xi)V,V).
\eeq
Finally, condition \eqref{LC} and Proposition \ref{prop_SM} ensure that the family
$\{ Q_\eps^{(m)}(t,\xi):\, \eps\in(0,1],\, t\in[0,T],\, \xi\in\R^n\}$ is nearly diagonal.

In the sequel we assume that the coefficients $a_j$ in the equation \eqref{CP} are of class ${C}^k$, or in other words that the matrix $A(t,\xi)$ has entries of class ${C}^k$ in $t\in[0,T]$. It follows by construction that the quasi-symmetriser has the same regularity property.
We now estimate the three terms of the right hand side of
\eqref{EE}.

\subsection{First term}

We write $\frac{|(\partial_tQ^{(m)}_\eps V,V)|}{(Q^{(m)}_\eps(t,\xi) V(t,\xi), V(t,\xi))}$ as
\[
\frac{|(\partial_tQ^{(m)}_\eps V,V)|}{(Q^{(m)}_\eps(t,\xi) V(t,\xi), V(t,\xi))^{1-1/k}(Q^{(m)}_\eps(t,\xi) V(t,\xi), V(t,\xi))^{1/k}}.
\]
From \eqref{p2} we have
\begin{multline*}
\frac{|(\partial_tQ^{(m)}_\eps V,V)|}{(Q^{(m)}_\eps(t,\xi) V(t,\xi), V(t,\xi))}\le \frac{|(\partial_tQ^{(m)}_\eps V,V)|}{(Q^{(m)}_\eps(t,\xi) V(t,\xi), V(t,\xi))^{1-1/k}(C_m^{-1}\eps^{2(m-1)}|V|^2)^{1/k}}\\
\le C_m^{1/k}\eps^{-2(m-1)/k}\frac{|(\partial_tQ^{(m)}_\eps V,V)|}{(Q^{(m)}_\eps(t,\xi) V(t,\xi), V(t,\xi))^{1-1/k}|V|^{2/k}}.
\end{multline*}
An application of Lemma \ref{lem_new} yields the estimate
\begin{multline*}
\int_{0}^T\frac{|(\partial_tQ^{(m)}_\eps V,V)|}{(Q^{(m)}_\eps(t,\xi) V(t,\xi), V(t,\xi))}\, dt\le C_m^{1/k}\eps^{-2(m-1)/k}C_T\sup_{\xi\in\R^n}\Vert Q_\eps(\cdot,\xi)\Vert^{1/k}_{{C}^k([0,T])}\\
\le C_1\eps^{-2(m-1)/k},
\end{multline*}
for all $\eps\in(0,1]$. Setting $\frac{|(\partial_tQ^{(m)}_\eps V,V)|}{(Q^{(m)}_\eps(t,\xi) V(t,\xi), V(t,\xi))}=K_\eps(t,\xi)$ we conclude that
\[
\frac{|(\partial_tQ^{(m)}_\eps V,V)|E_\eps}{(Q^{(m)}_\eps(t,\xi) V(t,\xi), V(t,\xi))}=K_\eps(t,\xi)E_\eps,
\]
with
\[
\int_{0}^T K_\eps(t,\xi)\, dt\le C_1\eps^{-2(m-1)/k}.
\]
\subsection{Second term}
From the property \eqref{p3} we have that
\[
|\lara{\xi}((Q^{(m)}_\eps A-A^\ast Q^{(m)}_\eps)V,V)|\le C_m\eps\lara{\xi}(Q_\eps^{(m)}(t,\xi)V,V)\le C_2\eps\lara{\xi}E_\eps.
\]

\subsection{Third term}
We now concentrate on
\[
((Q^{(m)}_\eps B-B^\ast Q^{(m)}_\eps)V,V),
\]
which is the main task in this paper.
By Proposition \ref{prop_qs}(iv) and the definition of the matrix $B(t,\xi)$
we have that
\begin{multline*}
((Q^{(m)}_\eps B-B^\ast Q^{(m)}_\eps)V,V)=((Q_0^{(m)}B-B^\ast Q_0^{(m)})V,V)\\
+\eps^2\sum_{i=1}^m((Q^{(m-1)}_\eps(\pi_i\lambda)^\sharp B-B^\ast Q^{(m-1)}_\eps(\pi_i\lambda)^\sharp)V,V),
\end{multline*}
where we notice that
$(Q^{(m-1)}_\eps(\pi_i\lambda)^\sharp B-B^\ast Q^{(m-1)}_\eps(\pi_i\lambda)^\sharp)=0$
due to the structure of zeros in $B$ and in
$Q^{(m-1)}_\eps(\pi_i\lambda)^\sharp$. Hence
\[
((Q^{(m)}_\eps B-B^\ast Q^{(m)}_\eps)V,V)=((Q_0^{(m)} B-B^\ast Q_0^{(m)})V,V).
\]
Note that from Proposition \ref{prop_qs}(i) we have that $(Q_0 V,V)\le E_\eps$.
In the next section we will show that the conditions on $B$
corresponding to \eqref{EQ:lot} imply that
\beq
\label{cB}
|((Q_0^{(m)} B-B^\ast Q_0^{(m)})V,V)|\le C_3 (Q_0 V,V)\le C_3 E_\eps,
\eeq
for some constant $C_3>0$ independent of $t\in[0,T]$, $\xi\in\R^n$ and $V\in\C^m$.

\begin{remark}\label{REM:sh}
Note that condition \eqref{LC} is trivially satisfied when the roots are distinct, i.e. in the strictly hyperbolic case. It follows that the family $\{Q_\eps(\lambda)\}$ of quasi-symmetrisers is nearly diagonal and, therefore, there exists a constant $c_0>0$ such that $Q^{(m)}_0\ge c_0\text{diag}\,Q_0^{(m)}$. This means that
\[
(Q_0^{(m)}V,V)\ge c_0\sum_{i=1}^m q_{0,ii}|V_i|^2
\]
holds for all $V\in\C^m$. From the hypothesis of strict hyperbolicity it easily follows that
\[
\inf_{t\in[0,T], |\xi|\ge 1, i=1,...,m} q_{0,ii}(t,\xi)>0.
\]
This bound from below implies
\beq
\label{struct_b}
(Q_0^{(m)}(t,\xi)V,V)\ge c'_0 |V|^2
\eeq
for $t\in[0,T]$ and $|\xi|\ge 1$ and hence the estimate
\[
|((Q_0^{(m)} B-B^\ast Q_0^{(m)})V,V)|\le C_3 (Q_0 V,V)
\]
holds trivially in the strictly hyperbolic case for any lower order term $B$ (for our purposes it will not be restrictive to assume $|\xi|\ge 1$). Concluding, when the roots $\lambda_i$ are distinct the Gevrey and ultradistributional well-posedness results in Theorem \ref{theo_GRi} and Theorem \ref{theo_GR2i} can be stated without additional conditions on the lower order terms.
Strictly hyperbolic equations under low regularity (H\"older ${C}^{\alpha}$,
$0<\alpha<1$) of the coefficients have
been analysed by the authors in \cite{GR:11}, to which we refer for general statements
on the Gevrey and ultradistributional well-posedness in this setting.
\end{remark}

\section{Estimates for the lower order terms}
\label{SEC:lc}

We begin by rewriting $((Q_0^{(m)} B-B^\ast Q_0^{(m)})V,V)$ in terms of the matrix $\mathcal{W}=\mathcal{W}^{(m)}$. From Proposition \ref{prop_qs}(v) we have
\begin{multline*}
((Q_0^{(m)} B-B^\ast Q_0^{(m)})V,V)=(m-1)!((\mathcal{W}BV,\mathcal{W}V)-(\mathcal{W}V,\mathcal{W}BV))\\
=2i(m-1)!\Im (\mathcal{W}BV,\mathcal{W}V).
\end{multline*}
It follows that
\[
|((Q_0^{(m)} B-B^\ast Q_0^{(m)})V,V)|\le 2(m-1)!|\mathcal{W}BV||\mathcal{W}V|.
\]
Since
\[
(Q_0 V,V)=(m-1)!|\mathcal{W}V|^2
\]
we have that if
\beq
\label{cB2}
|\mathcal{W}BV|\le C|\mathcal{W}V|
\eeq
for some constant $C>0$ independent of $t$, $\xi$ and $V$, then the condition \eqref{cB} will hold. \emph{It is our task to show that the condition \eqref{EQ:lot} on the matrix $B$ of the lower order terms implies the estimate \eqref{cB2}}.

Before dealing with the general case of $B$ $m\times m$-matrix,
let us consider the instructive case $m=3$, which will illustrate the
general argument in a simplified setting. In the sequel, for $f$ and $g$ real-valued functions (in the variable $y$) we write $f(y)\prec g(y)$ if there exists a constant $C>0$ such that $f(y)\le C g(y)$ for all $y$. More precisely, we will set $y=(t,\xi)$ or $y=(t,\xi,V)$.

\subsection{The case $m=3$}

By definition of the row vectors $W^{(3)}_i$, $i=1,2,3$, we have that
\[
\mathcal{W}=\left(
    \begin{array}{ccc}
      \lambda_2\lambda_3 & -\lambda_2-\lambda_3 & 1\\
      \lambda_3\lambda_1 & -\lambda_3-\lambda_1 & 1\\
      \lambda_1\lambda_2 & -\lambda_1-\lambda_2 & 1
      \end{array}
  \right),
\]
where $\lambda_i$, $i=1,2,3$, are the 0-order normalised roots. Hence
\[
\mathcal{W}BV=\left(
    \begin{array}{c}
      B_1V_1+B_2V_2+B_3V_3\\
      B_1V_1+B_2V_2+B_3V_3\\
      B_1V_1+B_2V_2+B_3V_3
      \end{array}
  \right)
\]
and
\[
\mathcal{W}V=\left(
    \begin{array}{c}
      (\lambda_2\lambda_3)V_1-(\lambda_2+\lambda_3)V_2+V_3\\
      (\lambda_3\lambda_1)V_1-(\lambda_3+\lambda_1)V_2+V_3\\
      (\lambda_1\lambda_2)V_1-(\lambda_1+\lambda_2)V_2+V_3
      \end{array}
  \right).
\]
Thus, instead of working on proving \eqref{cB2} we can work on the equivalent inequality
\begin{multline}
\label{m3}
|B_1V_1+B_2V_2+B_3V_3|^2\prec |(\lambda_2\lambda_3)V_1-(\lambda_2+\lambda_3)V_2+V_3|^2+\\
+|(\lambda_3\lambda_1)V_1-(\lambda_3+\lambda_1)V_2+V_3|^2 + |(\lambda_1\lambda_2)V_1-(\lambda_1+\lambda_2)V_2+V_3|^2.
\end{multline}
In terms of the coefficients of the matrix $B$ the Levi conditions \eqref{LC3i}
on the lower order
terms can be written as
\beq
\begin{split}
\label{LC3}
&|B_1|^2\prec\lambda_1^2\lambda_2^2+\lambda_2^2\lambda^2_3+\lambda_3^2\lambda_1^2,\\
&|B_2|^2\prec (\lambda_1+\lambda_2)^2+(\lambda_2+\lambda_3)^2+(\lambda_3+\lambda_1)^2,\\
&|B_3|^2\prec c.
\end{split}
\eeq
Under these conditions we now want to prove that \eqref{m3} holds for all vectors $V$.
We note here that actually for the right hand side of \eqref{m3} by the triangle
inequality
we have the upper bound
\begin{multline*}
|(\lambda_2\lambda_3)V_1-(\lambda_2+\lambda_3)V_2+V_3|^2+
|(\lambda_3\lambda_1)V_1-(\lambda_3+\lambda_1)V_2+V_3|^2 +
|(\lambda_1\lambda_2)V_1-(\lambda_1+\lambda_2)V_2+V_3|^2\\
\prec(\lambda_1^2\lambda_2^2+\lambda_2^2\lambda^2_3+\lambda_3^2\lambda_1^2)|V_1|^2
+((\lambda_1+\lambda_2)^2+(\lambda_2+\lambda_3)^2+(\lambda_3+\lambda_1)^2)|V_2|^2+|V_3|^2,
\end{multline*}
in which the right hand side of \eqref{LC3} appears naturally.

Our strategy is to proceed by 3 steps making use of the following partition of $\R^3$:
\[
\R^3=\Sigma_1^{\delta_1} \cup \big(\big(\Sigma_1^{\delta_1}\big)^{\rm{c}}\cap\Sigma_2^{\delta_2}\big)\cup \big(\big(\Sigma_1^{\delta_1}\big)^{\rm{c}}\cap\big(\Sigma_2^{\delta_2}\big)^{\rm{c}}\big),
\]
where
\begin{multline*}
\Sigma_1^{\delta_1}:=\\
\{ V\in\R^3:\,|V_3|^2+((\lambda_1+\lambda_2)^2+(\lambda_2+\lambda_3)^2+(\lambda_3+\lambda_1)^2)|V_2|^2\le \delta_1(\lambda_1^2\lambda_2^2+\lambda_2^2\lambda^2_3+\lambda_3^2\lambda_1^2)|V_1|^2\},
\end{multline*}
and
\[
\Sigma_2^{\delta_2}:=\{V\in\R^3:\,|V_3|^2\le\delta_2((\lambda_1+\lambda_2)^2+
(\lambda_2+\lambda_3)^2+(\lambda_3+\lambda_1)^2)|V_2|^2\}.
\]
\\
{\bf Estimate on $\Sigma_1^{\delta_1}$.}

\medskip
Making use of the conditions \eqref{LC3} we have that
\begin{multline}\label{aux1}
|B_1V_1+B_2V_2+B_3V_3|^2\prec |B_1|^2|V_1|^2+|B_2|^2|V_2|^2+|B_3|^2|V_3|^2\\
\prec (\lambda_1^2\lambda_2^2+\lambda_2^2\lambda^2_3+\lambda_3^2\lambda_1^2)|V_1|^2+
((\lambda_1+\lambda_2)^2+(\lambda_2+\lambda_3)^2+(\lambda_3+\lambda_1)^2)|V_2|^2+|V_3|^2\\
\prec (\lambda_1^2\lambda_2^2+\lambda_2^2\lambda^2_3+\lambda_3^2\lambda_1^2)|V_1|^2
\end{multline}
on $\Sigma_1^{\delta_1}$. Note that
\begin{multline*}
|(\lambda_2\lambda_3)V_1-(\lambda_2+\lambda_3)V_2+V_3|^2+
|(\lambda_3\lambda_1)V_1-(\lambda_3+\lambda_1)V_2+V_3|^2 \succ |(\lambda_2\lambda_3-\lambda_3\lambda_1)V_1-(\lambda_2-\lambda_1)V_2|^2\\
\succ (\lambda_2-\lambda_1)^2|\lambda_3 V_1-V_2|^2\succ (\lambda_1^2+\lambda_2^2)|\lambda_3 V_1-V_2|^2,
\end{multline*}
where also in the last line we make use of the condition \eqref{LC} on the
roots $\lambda_i$. Hence, by applying this to different combinations of terms, we get
\begin{multline}
\label{bound1}
|(\lambda_2\lambda_3)V_1-(\lambda_2+\lambda_3)V_2+V_3|^2
+|(\lambda_3\lambda_1)V_1-(\lambda_3+\lambda_1)V_2+V_3|^2 + |(\lambda_1\lambda_2)V_1-(\lambda_1+\lambda_2)V_2+V_3|^2 \\
\succ(\lambda_1^2+\lambda_2^2)|\lambda_3 V_1-V_2|^2+(\lambda_2^2+\lambda_3^2)|\lambda_1 V_1-V_2|^2+(\lambda_3^2+\lambda_1^2)|\lambda_2 V_1-V_2|^2\\
\succ \lambda_1^2(|\lambda_3 V_1-V_2|^2+|\lambda_2 V_1-V_2|^2)+\lambda_2^2(|\lambda_3 V_1-V_2|^2+|\lambda_1 V_1-V_2|^2)\\
+\lambda_3^2(|\lambda_2 V_1-V_2|^2+|\lambda_1 V_1-V_2|^2)\\
\succ (\lambda_1^2(\lambda_3-\lambda_2)^2+\lambda_2^2(\lambda_3-\lambda_1)^2+\lambda_3^2(\lambda_2-\lambda_1)^2)|V_1|^2\\
\succ (\lambda_1^2\lambda_2^2+\lambda_2^2\lambda^2_3+\lambda_3^2\lambda_1^2)|V_1|^2.
\end{multline}
From the bound from below \eqref{bound1} and the estimate \eqref{aux1} one has that the inequality \eqref{m3} holds true in the region $\Sigma_1^{\delta_1}$ for all $\delta_1>0$.\\
\\
{\bf Estimate on $\big(\Sigma_1^{\delta_1}\big)^{\rm{c}}\cap\Sigma_2^{\delta_2}.$}

\medskip
We assume from now on that $V\in\big(\Sigma_1^{\delta_1}\big)^{\rm{c}}$ which means that
\[
|V_3|^2+((\lambda_1+\lambda_2)^2+(\lambda_2+\lambda_3)^2+(\lambda_3+\lambda_1)^2)|V_2|^2> \delta_1(\lambda_1^2\lambda_2^2+\lambda_2^2\lambda^2_3+\lambda_3^2\lambda_1^2)|V_1|^2.
\]
One immediately has
\begin{multline*}
|B_1V_1+B_2V_2+B_3V_3|^2\prec |B_1|^2|V_1|^2+|B_2|^2|V_2|^2+|B_3|^2|V_3|^2\\
\prec (\lambda_1^2\lambda_2^2+\lambda_2^2\lambda^2_3+\lambda_3^2\lambda_1^2)|V_1|^2+
((\lambda_1+\lambda_2)^2+(\lambda_2+\lambda_3)^2+(\lambda_3+\lambda_1)^2)|V_2|^2+|V_3|^2\\
\prec ((\lambda_1+\lambda_2)^2+(\lambda_2+\lambda_3)^2+(\lambda_3+\lambda_1)^2)|V_2|^2+|V_3|^2.
\end{multline*}
More precisely,
\begin{equation}\label{aux2}
|B_1V_1+B_2V_2+B_3V_3|^2\prec ((\lambda_1+\lambda_2)^2+(\lambda_2+\lambda_3)^2+(\lambda_3+\lambda_1)^2)|V_2|^2
\end{equation}
holds for all $V\in\big(\Sigma_1^{\delta_1}\big)^{\rm{c}}\cap\Sigma_2^{\delta_2}$ .
We estimate the right-hand side of \eqref{m3} as
\begin{multline*}
|(\lambda_2\lambda_3)V_1-(\lambda_2+\lambda_3)V_2+V_3|^2+
|(\lambda_3\lambda_1)V_1-(\lambda_3+\lambda_1)V_2+V_3|^2 +
|(\lambda_1\lambda_2)V_1-(\lambda_1+\lambda_2)V_2+V_3|^2\\
\succ\gamma_1(|(\lambda_2+\lambda_3)V_2-V_3|^2+|(\lambda_3+\lambda_1)V_2-V_3|^2+|(\lambda_1+\lambda_2)V_2-V_3|^2)\\
-\gamma_2(\lambda_1^2\lambda_2^2+\lambda_2^2\lambda^2_3+\lambda_3^2\lambda_1^2)|V_1|^2.
\end{multline*}
By using condition \eqref{LC} we get the estimate
\begin{multline*}
(\lambda_2-\lambda_1)^2+(\lambda_3-\lambda_2)^2+(\lambda_3-\lambda_1)^2\ge\frac{2}{M}(\lambda_1^2+\lambda_2^2+\lambda_3^2)\\
\ge\frac{1}{2M}((\lambda_1+\lambda_2)^2+(\lambda_2+\lambda_3)^2+(\lambda_3+\lambda_1)^2)
\end{multline*}
and then
\begin{multline*}
|(\lambda_2\lambda_3)V_1-(\lambda_2+\lambda_3)V_2+V_3|^2+
|(\lambda_3\lambda_1)V_1-(\lambda_3+\lambda_1)V_2+V_3|^2 +
|(\lambda_1\lambda_2)V_1-(\lambda_1+\lambda_2)V_2+V_3|^2\\
\succ\gamma_1((\lambda_2-\lambda_1)^2+(\lambda_3-\lambda_2)^2+(\lambda_3-\lambda_1)^2)|V_2|^2-
\gamma_2(\lambda_1^2\lambda_2^2+\lambda_2^2\lambda^2_3+\lambda_3^2\lambda_1^2)|V_1|^2\\
\succ \gamma'_1((\lambda_1+\lambda_2)^2+(\lambda_2+\lambda_3)^2+(\lambda_3+\lambda_1)^2)|V_2|^2-
\gamma_2(\lambda_1^2\lambda_2^2+\lambda_2^2\lambda^2_3+\lambda_3^2\lambda_1^2)|V_1|^2\\
\succ (\gamma'_1-\gamma_2\frac{1}{\delta_1}(\delta_2+1))
((\lambda_1+\lambda_2)^2+(\lambda_2+\lambda_3)^2+(\lambda_3+\lambda_1)^2)|V_2|^2.
\end{multline*}
Combining this with \eqref{aux2} we conclude that for any $\delta_2$ and for
$\delta_1$ big enough the right-hand side of \eqref{m3}
can be estimated from below by $|V_2|^2$ and, therefore, \eqref{m3} holds true on $\big(\Sigma_1^{\delta_1}\big)^{\rm{c}}\cap\Sigma_2^{\delta_2}$.\\
\\
{\bf Estimate on $\big(\Sigma_1^{\delta_1}\big)^{\rm{c}}\cap\big(\Sigma_2^{\delta_2}\big)^{\rm{c}}.$}

\medskip
Since on $\big(\Sigma_2^{\delta_2}\big)^{\rm{c}}$ we have
$$|V_3|^2> \delta_2((\lambda_1+\lambda_2)^2+(\lambda_2+\lambda_3)^2+
(\lambda_3+\lambda_1)^2)|V_2|^2,$$
it follows that
\[
|B_1V_1+B_2V_2+B_3V_3|^2\prec |V_3|^2.
\]
Then, for $V\in \big(\Sigma_1^{\delta_1}\big)^{\rm{c}}\cap\big(\Sigma_2^{\delta_2}\big)^{\rm{c}}$,
for suitable $\gamma_1, \gamma_2, \gamma_3$ (independent of $V$),
\begin{multline*}
|(\lambda_2\lambda_3)V_1-(\lambda_2+\lambda_3)V_2+V_3|^2+
|(\lambda_3\lambda_1)V_1-(\lambda_3+\lambda_1)V_2+V_3|^2 +
|(\lambda_1\lambda_2)V_1-(\lambda_1+\lambda_2)V_2+V_3|^2\\
\succ\gamma_3|V_3|^2-\gamma_2((\lambda_1+\lambda_2)^2+(\lambda_2+\lambda_3)^2+(\lambda_3+\lambda_1)^2)|V_2|^2\\
-\gamma_1\frac{1}{\delta_1}(|V_3|^2+((\lambda_1+\lambda_2)^2+(\lambda_2+\lambda_3)^2+(\lambda_3+\lambda_1)^2)|V_2|^2)\\
\succ (\gamma_3-\gamma_1\frac{1}{\delta_1})|V_3|^2-(\gamma_2+\gamma_1\frac{1}{\delta})\frac{1}{\delta_2}|V_3|^2.
\end{multline*}
We conclude that for $\delta_1$ and $\delta_2$ big enough,
\begin{multline*}
|(\lambda_2\lambda_3)V_1-(\lambda_2+\lambda_3)V_2+V_3|^2+
|(\lambda_3\lambda_1)V_1-(\lambda_3+\lambda_1)V_2+V_3|^2\\
+|(\lambda_1\lambda_2)V_1-(\lambda_1+\lambda_2)V_2+V_3|^2\succ |V_3|^2,
\end{multline*}
and, therefore, \eqref{m3} holds in the area  $\big(\Sigma_1^{\delta_1}\big)^{\rm{c}}\cap\big(\Sigma_2^{\delta_2}\big)^{\rm{c}}$.\\

The next table describes and summarises the proof above:\\
\[
\begin{tabular}{|c|c|c|}
  \hline
  {\bf Area} & {\bf Estimates in}  &  {\bf $\delta_i$}\\
  \hline
  $\Sigma_1^{\delta_1}$ & $|V_1|^2$ & any $\delta_1$ \\
  \hline
  $\big(\Sigma_1^{\delta_1}\big)^{\rm{c}}\cap\Sigma_2^{\delta_2}$ & $|V_2|^2$ & $\delta_1$ big, any $\delta_2$ \\
  \hline
  $\big(\Sigma_1^{\delta_1}\big)^{\rm{c}}\cap\big(\Sigma_2^{\delta_2}\big)^{\rm{c}}$ & $|V_3|^2$ & $\delta_1$ and $\delta_2$ big \\
  \hline
\end{tabular}
\]
\vspace{0.3cm}
\subsection{The general case $m$}
Inspired by the previous subsection we now deal with the inequality
\[
|\mathcal{W}BV|\prec |\mathcal{W}V|
\]
for $\mathcal{W}=\mathcal{W}^{(m)}$ and arbitrary $m\in\N$. This is the topic of the following theorem where the coefficients $\sigma^{(m)}_h(\lambda)$ are defined as in Section \ref{SEC:qs} and $\lambda=(\lambda_1,\lambda_2,...,\lambda_m)\in\R^m$ is the vector of the eigenvalues of the matrix $A(t,\xi)$ (or the 0-order normalised roots) satisfying the condition \eqref{LC}.
\begin{thm}
\label{theo_LC}
Let the entries $B_j$ of the matrix
\[
B=\left(
    \begin{array}{ccccc}
      0 & 0 & 0 & \dots & 0\\
      0 & 0 & 0& \dots & 0 \\
      \dots & \dots & \dots & \dots & 0 \\
      B_1 & B_2 & \dots & \dots & B_m \\
    \end{array}
  \right)
\]
in \eqref{system_new} fulfil the condition
\beq
\label{LB}
|B_j|^2\prec \sum_{i=1}^m |\sigma_{m-j}^{(m-1)}(\pi_i\lambda)|^2
\eeq
for $j=1,...,m$. Then we have
\[
|\mathcal{W}BV|\prec |\mathcal{W}V|
\]
uniformly over all $V\in \C^m$. More precisely, define
\[
\Sigma_k^{\delta_k}:=\{V\in\C^m:\, |V_m|^2+\sum_{j=k+1}^{m-1}\sum_{i=1}^m|\sigma_{m-j}^{(m-1)}(\pi_i\lambda)|^2|V_j|^2
\le\delta_k\sum_{i=1}^m|\sigma_{m-k}^{(m-1)}(\pi_i\lambda)|^2|V_k|^2\},
\]
for $k=1,...,m-1$. Then, there exist suitable $\delta_j>0$, $j=1,...,m-1,$ such that
\[
\begin{split}
|\mathcal{W}BV|^2&\prec \sum_{i=1}^m |\sigma_{m-1}^{(m-1)}(\pi_i\lambda)|^2 |V_1|^2,
\\
|\mathcal{W}V|^2&\succ \sum_{i=1}^m |\sigma_{m-1}^{(m-1)}(\pi_i\lambda)|^2 |V_1|^2
\end{split}
\]
on $\Sigma_1^{\delta_1}$,
\[
\begin{split}
|\mathcal{W}BV|^2&\prec \sum_{i=1}^m |\sigma_{m-k}^{(m-1)}(\pi_i\lambda)|^2 |V_k|^2,
\\
|\mathcal{W}V|^2&\succ \sum_{i=1}^m |\sigma_{m-k}^{(m-1)}(\pi_i\lambda)|^2 |V_k|^2
\end{split}
\]
on $$\big(\Sigma_1^{\delta_1}\big)^{\rm{c}}\cap\big(\Sigma_2^{\delta_2}\big)^{\rm{c}}\cap\cdots\cap
\big(\Sigma_{k-1}^{\delta_{k-1}}\big)^{\rm{c}}\cap\Sigma_k^{\delta_k}$$ for $2\le k\le m-1$, and
\[
\begin{split}
|\mathcal{W}BV|^2&\prec \sum_{i=1}^m |\sigma_{0}^{(m-1)}(\pi_i\lambda)|^2 |V_m|^2,
\\
|\mathcal{W}V|^2&\succ \sum_{i=1}^m |\sigma_{0}^{(m-1)}(\pi_i\lambda)|^2 |V_m|^2
\end{split}
\]
on $\big(\Sigma_1^{\delta_1}\big)^{\rm{c}}\cap\big(\Sigma_2^{\delta_2}\big)^{\rm{c}}\cap\cdots\cap
\big(\Sigma_{m-1}^{\delta_{m-1}}\big)^{\rm{c}}$.
\end{thm}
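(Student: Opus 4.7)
My plan is to follow the region-by-region template from the $m=3$ computation above. For compactness, let me write $T_k:=\sum_{i=1}^m|\sigma_{m-k}^{(m-1)}(\pi_i\lambda)|^2$, which by Proposition \ref{prop_qs}(v) equals $q_{0,kk}^{(m)}/(m-1)!$; then $\Sigma_k^{\delta_k}$ reads $\{|V_m|^2+\sum_{j=k+1}^{m-1}T_j|V_j|^2\le\delta_k T_k|V_k|^2\}$, with the convention $T_m=m$. A first, cheap, global upper bound on $|\mathcal{W}BV|^2$ comes from the structure of $B$ and $\mathcal{W}$: since $B$ has only its bottom row nonzero and the last column of $\mathcal{W}$ is identically $\sigma_0^{(m-1)}=1$, every entry $(\mathcal{W}BV)_i$ equals $\sum_{j=1}^m B_j V_j$, so Cauchy--Schwarz and the Levi hypothesis \eqref{LB} give
\[
|\mathcal{W}BV|^2\le m^2\sum_{j=1}^m|B_j|^2|V_j|^2\prec\sum_{j=1}^m T_j|V_j|^2.
\]

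Next I would replace $\sum_j T_j|V_j|^2$ by the single term $T_k|V_k|^2$ inside the $k$-th region $\bigl(\bigcap_{l<k}(\Sigma_l^{\delta_l})^c\bigr)\cap\Sigma_k^{\delta_k}$: the defining inequality of $\Sigma_k^{\delta_k}$ bounds the tail $\sum_{j>k}T_j|V_j|^2$ by $\delta_k T_k|V_k|^2$, while the complement inequalities $T_l|V_l|^2<\delta_l^{-1}(|V_m|^2+\sum_{j>l}T_j|V_j|^2)$, iterated from $l=k-1$ down to $l=1$, absorb each $T_l|V_l|^2$ into $T_k|V_k|^2$ up to a multiplicative constant depending on $\delta_1,\ldots,\delta_k$. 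The final region, in which no $\Sigma_k$ constraint holds, is handled in the same way with $k=m$ and $|V_m|^2$ as the dominant term. This delivers the desired region-by-region upper bound $|\mathcal{W}BV|^2\prec T_k|V_k|^2$.

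The real content of the statement is the matching lower bound $|\mathcal{W}V|^2\succ T_k|V_k|^2$, which I would obtain from the stronger, region-free inequality
\[
|\mathcal{W}V|^2\succ \sum_{j=1}^m T_j|V_j|^2,
\]
i.e.\ the near-diagonality of $Q_0^{(m)}=(m-1)!\mathcal{W}^\ast\mathcal{W}$ on the set $\mathcal{S}_M$ cut out by \eqref{LC}. Combining Proposition \ref{prop_qs}(vi)--(vii) with \eqref{LC} yields $q_{0,11}^{(m)}\cdots q_{0,mm}^{(m)}\le C_mM^{\binom{m}{2}}\det Q_0^{(m)}/(m-1)!$, so Lemma \ref{lem_old} gives near-diagonality on the open set $\{\det Q_0^{(m)}>0\}$; the remaining locus, where \eqref{LC} forces any coincident eigenvalues to vanish, is handled either by continuity or, equivalently, by letting $\eps\to 0^+$ in Proposition \ref{prop_SM}, whose near-diagonality constant is uniform in $\eps\in(0,1]$. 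Once this is in place, $T_k|V_k|^2\le\sum_j T_j|V_j|^2\prec|\mathcal{W}V|^2$ is immediate, and the main remaining obstacle is bookkeeping: one must fix $\delta_{m-1},\delta_{m-2},\ldots,\delta_1$ inductively, from the top down, large enough so that the multiplicative factors produced by the cross-term absorptions in the upper-bound step remain strictly positive, exactly as in the table summarising the $m=3$ case.
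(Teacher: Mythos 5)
Your proof is correct, but it takes a genuinely different and shorter route than the paper's. Both arguments begin with the same cheap upper bound $|\mathcal{W}BV|^2 \prec \sum_{j}T_j|V_j|^2$ with $T_j=\sum_i|\sigma^{(m-1)}_{m-j}(\pi_i\lambda)|^2$, exploiting that $BV$ is supported in the last slot and the last column of $\mathcal{W}$ is all ones. The divergence is in the lower bound. The paper builds two inductive lemmas (Lemmas \ref{lemma1} and \ref{lemma2}) about differences and telescoped sums of the elementary symmetric functions $\sigma^{(m-1)}_{m-j}(\pi_i\lambda)$, obtaining only the partial estimate $\sum_i|\sum_{j\ge k}\sigma^{(m-1)}_{m-j}(\pi_i\lambda)V_j|^2\succ T_k|V_k|^2$, and must then run the region decomposition by $\Sigma_k^{\delta_k}$ (with the $\delta_k$ chosen from the top down) to absorb the leftover $j<k$ terms. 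You instead notice that $\sum_j T_j|V_j|^2$ is exactly $(m-1)!^{-1}(\operatorname{diag}Q_0^{(m)}V,V)$ while $|\mathcal{W}V|^2=(m-1)!^{-1}(Q_0^{(m)}V,V)$, so the wanted bound is precisely near-diagonality of $Q_0^{(m)}$; this follows from Proposition \ref{prop_qs}(vi)--(vii) and \eqref{LC} via Lemma \ref{lem_old} (plus the $\eps\to 0^+$ limit in Proposition \ref{prop_SM} to handle the degenerate locus). This gives the stronger region-free inequality $|\mathcal{W}V|^2\succ\sum_j T_j|V_j|^2$, from which the theorem's region-by-region statements drop out immediately and the main conclusion $|\mathcal{W}BV|\prec|\mathcal{W}V|$ follows without any partition of $\C^m$ at all. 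Your route is more conceptual and makes the role of \eqref{LC} transparent (and in fact the paper itself records exactly the near-diagonality of $Q_0^{(m)}$ in Remark \ref{REM:sh}, just not in this proof); the paper's route has the virtue of being self-contained and elementary, avoiding reliance on Proposition \ref{prop_SM} and Lemma \ref{lem_old}, which are imported from \cite{KS}.
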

Note that \eqref{LB} is a reformulation of the condition \eqref{EQ:lot2} on the lower order terms. The proof of Theorem \ref{theo_LC} makes use of the following two lemmas.
\begin{lem}
\label{lemma1}
For all $i$ and $j$ with $1\le i,j\le m$ and $k=1,...,m-1,$ one has
\begin{multline}
\label{formula_diff}
\sigma_{m-k}^{(m-1)}(\pi_i\lambda)-\sigma_{m-k}^{(m-1)}(\pi_j\lambda)\\
=(-1)^{m-k}(\lambda_j-\lambda_i)
\sum_{\substack{i_h\neq i,\, i_h\neq j\\ 1\le i_1<i_2<\cdots<i_{m-k-1}\le m}} \lambda_{i_1}\lambda_{i_2}\cdots\lambda_{i_{m-k-1}}.
\end{multline}
\end{lem}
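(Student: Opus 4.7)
The plan is to prove this identity by a direct combinatorial manipulation of the elementary symmetric functions, expanding both sides and classifying terms by which of the indices $i$ and $j$ they contain.

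First I would rewrite the left-hand side using the definition of $\sigma_{m-k}^{(m-1)}$ applied to the $(m-1)$-tuple $\pi_i\lambda$. Concretely,
\[
\sigma_{m-k}^{(m-1)}(\pi_i\lambda)=(-1)^{m-k}\sum_{\substack{S\subset\{1,\ldots,m\}\setminus\{i\}\\ |S|=m-k}}\prod_{l\in S}\lambda_l,
\]
and similarly with $i$ replaced by $j$. Next, I would split each of these sums into subsets that do contain the other distinguished index and subsets that do not. More precisely, the subsets $S\subset\{1,\ldots,m\}\setminus\{i\}$ of cardinality $m-k$ come in two disjoint families: those avoiding $j$, i.e.\ $S\subset\{1,\ldots,m\}\setminus\{i,j\}$, and those of the form $\{j\}\cup S'$ with $S'\subset\{1,\ldots,m\}\setminus\{i,j\}$ and $|S'|=m-k-1$. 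The same partition (with $i$ and $j$ swapped) applies to the sum for $\pi_j\lambda$.

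The key observation is that the subsets avoiding both $i$ and $j$ contribute identical expressions to $\sigma_{m-k}^{(m-1)}(\pi_i\lambda)$ and to $\sigma_{m-k}^{(m-1)}(\pi_j\lambda)$, so they cancel upon subtracting. What remains in the first sum is $\lambda_j$ times a sum over $(m-k-1)$-subsets $S'$ of $\{1,\ldots,m\}\setminus\{i,j\}$, while what remains in the second sum is $\lambda_i$ times the same sum over $S'$. Factoring out $(-1)^{m-k}(\lambda_j-\lambda_i)$ leaves exactly
\[
\sum_{\substack{i_h\neq i,\,i_h\neq j\\ 1\le i_1<\cdots<i_{m-k-1}\le m}}\lambda_{i_1}\cdots\lambda_{i_{m-k-1}},
\]
which yields \eqref{formula_diff}.

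There is no real obstacle here; the proof is a bookkeeping argument. The only mildly delicate point is the boundary case $k=m-1$, for which $m-k-1=0$ and the inner sum has to be interpreted as the empty product equal to $1$; a quick check confirms that in that case both sides reduce to $\lambda_i-\lambda_j$, so the formula remains valid.
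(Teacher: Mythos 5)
Your proof is correct and follows essentially the same route as the paper: split the defining sum for $\sigma_{m-k}^{(m-1)}(\pi_i\lambda)$ according to whether the index subset contains $j$, do the same with $i$ and $j$ swapped, cancel the common part, and factor out $(-1)^{m-k}(\lambda_j-\lambda_i)$. Your sanity check of the boundary case $k=m-1$ (empty product equals $1$, both sides reduce to $\lambda_i-\lambda_j$) is a harmless addition the paper does not spell out.
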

\begin{proof}
By definition of $\sigma_{m-k}^{(m-1)}(\pi_i\lambda)$ and $\sigma_{m-k}^{(m-1)}(\pi_j\lambda)$ we have that
\begin{multline*}
\sigma_{m-k}^{(m-1)}(\pi_i\lambda)=(-1)^{m-k}\sum_{\substack{1\le l_1<l_2<\cdots<l_{m-k}\le m\\ l_h\neq i}}\lambda_{l_1}\lambda_{l_2}\cdots\lambda_{l_{m-k}}\\
= (-1)^{m-k}\sum_{\substack{1\le l_1<l_2<\cdots<l_{m-k}\le m\\ l_h\neq i,j}}\lambda_{l_1}\lambda_{l_2}\cdots\lambda_{l_{m-k}}\\
+(-1)^{m-k}\lambda_j\sum_{\substack{1\le l_1<l_2<\cdots<l_{m-k-1}\le m\\ l_h\neq i,j}}\lambda_{l_1}\lambda_{l_2}\cdots\lambda_{l_{m-k-1}}
\end{multline*}
and
\begin{multline*}
\sigma_{m-k}^{(m-1)}(\pi_j\lambda)=(-1)^{m-k}\sum_{\substack{1\le l_1<l_2<\cdots<l_{m-k}\le m\\ l_h\neq j}}\lambda_{l_1}\lambda_{l_2}\cdots\lambda_{l_{m-k}}\\
= (-1)^{m-k}\sum_{\substack{1\le l_1<l_2<\cdots<l_{m-k}\le m\\ l_h\neq i,j}}\lambda_{l_1}\lambda_{l_2}\cdots\lambda_{l_{m-k}}\\
+(-1)^{m-k}\lambda_i\sum_{\substack{1\le l_1<l_2<\cdots<l_{m-k-1}\le m\\ l_h\neq i,j}}\lambda_{l_1}\lambda_{l_2}\cdots\lambda_{l_{m-k-1}}.
\end{multline*}
This leads immediately to the formula \eqref{formula_diff}.
\end{proof}
\begin{lem}
\label{lemma2}
For all $k=1,...,m$, we have
\beq
\label{formula_k}
\sum_{i=1}^m\biggl|\sum_{j=k+1}^m \sigma_{m-j}^{(m-1)}(\pi_i\lambda)V_j+\sigma_{m-k}^{(m-1)}(\pi_i\lambda)V_k\biggr|^2\succ
\sum_{i=1}^m|\sigma_{m-k}^{(m-1)}(\pi_i\lambda)|^2|V_k|^2.
\eeq
\end{lem}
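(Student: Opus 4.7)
The plan is to recognise the left-hand side of \eqref{formula_k} as a quadratic form in the quasi-symmetriser $Q_0^{(m)}$ at $\eps=0$, and to derive the estimate from the near-diagonality already available from Proposition \ref{prop_SM}. Introduce the auxiliary vector $V'\in\C^m$ obtained from $V$ by zeroing its first $k-1$ entries: $V'_j:=V_j$ for $j\ge k$ and $V'_j:=0$ otherwise. Since the $(i,j)$-entry of $\mathcal{W}^{(m)}$ is $\sigma_{m-j}^{(m-1)}(\pi_i\lambda)$, the expression inside the modulus on the left of \eqref{formula_k} is precisely $(\mathcal{W}^{(m)}V')_i$, so that the left-hand side equals $|\mathcal{W}^{(m)}V'|^2$. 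Proposition \ref{prop_qs}(v) then gives $(m-1)!\,|\mathcal{W}^{(m)}V'|^2=(Q_0^{(m)}V',V')$, while the same identity identifies the $(k,k)$-diagonal entry as $q_{0,kk}^{(m)}=(m-1)!\sum_r|\sigma_{m-k}^{(m-1)}(\pi_r\lambda)|^2$. Consequently \eqref{formula_k} reduces to the matrix inequality
\[
(Q_0^{(m)}(\lambda)V',V')\ \ge\ c\,q_{0,kk}^{(m)}(\lambda)\,|V_k|^2
\]
for a constant $c>0$ depending only on $m$ and $M$.

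The condition \eqref{LC} means precisely that the normalised roots lie in $\mathcal{S}_M$, so Proposition \ref{prop_SM} yields $c_0>0$ (depending only on $m$ and $M$) for which $(Q_\eps^{(m)}V,V)\ge c_0\sum_i q_{\eps,ii}^{(m)}|V_i|^2$ whenever $\eps\in(0,1]$ and $V\in\C^m$. By Proposition \ref{prop_qs}(i), $Q_\eps^{(m)}$ is a polynomial in $\eps^2$ with coefficient matrices independent of $\eps$, hence $Q_\eps^{(m)}\to Q_0^{(m)}$ and $\mathrm{diag}\,Q_\eps^{(m)}\to\mathrm{diag}\,Q_0^{(m)}$ entrywise as $\eps\to 0^+$. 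The positive semi-definite inequality $Q_\eps^{(m)}-c_0\,\mathrm{diag}\,Q_\eps^{(m)}\ge 0$ passes to the limit, giving
\[
(Q_0^{(m)}V',V')\ \ge\ c_0\sum_{i=1}^m q_{0,ii}^{(m)}|V'_i|^2\ \ge\ c_0\,q_{0,kk}^{(m)}|V_k|^2,
\]
which is precisely the reduced inequality above and therefore yields \eqref{formula_k}.

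The only delicate step is this passage to $\eps=0$, but it is entirely elementary given the polynomial dependence of $Q_\eps^{(m)}$ on $\eps^2$. An equivalent route that avoids the limit altogether is to apply Lemma \ref{lem_old} directly to $Q_0^{(m)}$: Proposition \ref{prop_qs}(vi) and (vii) combined with \eqref{LC} (which forces $\prod_{i<j}(\lambda_i^2+\lambda_j^2)\le M^{m(m-1)/2}\prod_{i<j}(\lambda_i-\lambda_j)^2$) yield
\[
\det Q_0^{(m)}\ \ge\ \frac{(m-1)!}{C_m M^{m(m-1)/2}}\,q_{0,11}^{(m)}\cdots q_{0,mm}^{(m)},
\]
so that Lemma \ref{lem_old} gives $Q_0^{(m)}\ge c\,m^{1-m}\,\mathrm{diag}\,Q_0^{(m)}$ whenever the roots are distinct. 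Under \eqref{LC} any coincident roots must vanish, and this degenerate situation is handled either by perturbing the roots and passing to the limit or by direct inspection (both sides of \eqref{formula_k} collapse consistently). It is worth noting that Lemma \ref{lemma1} is \emph{not} needed for the present proof; it will instead be used further downstream, when the differences $X_i-X_j$ are manipulated in order to balance the regions $\Sigma_k^{\delta_k}$ appearing in the statement of Theorem \ref{theo_LC}.
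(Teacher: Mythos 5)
Your proof is correct, and it takes a genuinely different route from the paper's. The paper proves the lemma by an explicit induction on $m$: it bounds the left-hand side from below by the sum of squared pairwise differences, then invokes Lemma~\ref{lemma1} to factor each difference $\sigma^{(m-1)}_{m-j}(\pi_{l_1}\lambda)-\sigma^{(m-1)}_{m-j}(\pi_{l_2}\lambda)$ as $(\lambda_{l_1}-\lambda_{l_2})$ times a symmetric polynomial, uses \eqref{LC} to pass to $\lambda_{l_1}^2+\lambda_{l_2}^2$, and recognises the inner sum as an instance of the same inequality one level down (order $m-1$, after puncturing $\lambda$ twice). You instead observe that the left-hand side is precisely $|\mathcal{W}^{(m)}V'|^2$ for the truncated vector $V'$, identify it (up to $(m-1)!$) with $(Q_0^{(m)}V',V')$ via Proposition~\ref{prop_qs}(v), and reduce the inequality to the near-diagonality $Q_0^{(m)}\ge c_0\,\mathrm{diag}\,Q_0^{(m)}$, obtained from Proposition~\ref{prop_SM} by letting $\eps\to 0^+$ (the cone of nonnegative Hermitian matrices being closed and $Q_\eps^{(m)}$ polynomial in $\eps^2$). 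That reduction, and the identification $q_{0,kk}^{(m)}=(m-1)!\sum_i|\sigma_{m-k}^{(m-1)}(\pi_i\lambda)|^2$, are exactly right. Your approach is shorter and more conceptual, trading the paper's elementary but somewhat heavy induction for a direct appeal to the near-diagonality result that the authors have already imported from \cite{KS}; the paper's route is more self-contained at this point of the text and shows explicitly where \eqref{LC} is consumed.

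One small inaccuracy in your closing remark: Lemma~\ref{lemma1} is in fact \emph{only} used in the paper's proof of this lemma; the proof of Theorem~\ref{theo_LC} relies on Lemma~\ref{lemma2} and the partition into the $\Sigma_k^{\delta_k}$ regions, but does not invoke Lemma~\ref{lemma1} again. With your proof in place Lemma~\ref{lemma1} would simply be unnecessary. This does not affect the validity of your argument.
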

\begin{proof}
We give a proof by induction on the order $m$. Setting $m=2$ the estimates above makes sense for $k=1$. Hence we have to prove that
\[
\sum_{i=1}^2|\sigma_{0}^{(1)}(\pi_i\lambda)V_2+\sigma_{1}^{(1)}(\pi_i\lambda)V_1|^2= \sum_{i=1}^2|V_2+\sigma_{1}^{(1)}(\pi_i\lambda)V_1|^2 \succ
\sum_{i=1}^2|\sigma_{1}^{(1)}(\pi_i\lambda)|^2|V_1|^2.
\]
This is clear since by the condition \eqref{LC} we have that
\begin{multline*}
\sum_{i=1}^2|V_2+\sigma_{1}^{(1)}(\pi_i\lambda)V_1|^2 \succ |\sigma^{(1)}_1(\pi_1\lambda)-\sigma^{(1)}_1(\pi_2\lambda)|^2|V_1|^2=(\lambda_2-\lambda_1)^2|V_1|^2\\
\succ (\lambda_1^2+\lambda_2^2)|V_1|^2=\sum_{i=1}^2|\sigma_{1}^{(1)}(\pi_i\lambda)|^2|V_1|^2.
\end{multline*}
Assume now that \eqref{formula_k} holds for $m-1$. Estimating the left-hand side of \eqref{formula_k} with the differences between two arbitrary summands we can write
\begin{multline*}
\sum_{i=1}^m\biggl|\sum_{j=k+1}^m \sigma_{m-j}^{(m-1)}(\pi_i\lambda)V_j+\sigma_{m-k}^{(m-1)}(\pi_i\lambda)V_k\biggr|^2\\
\succ \sum_{1\le l_1\neq l_2\le m}\biggl|\sum_{j=k+1}^m (\sigma_{m-j}^{(m-1)}(\pi_{l_1}\lambda)-\sigma_{m-j}^{(m-1)}(\pi_{l_2}\lambda))V_j+ (\sigma_{m-k}^{(m-1)}(\pi_{l_1}\lambda)-\sigma_{m-k}^{(m-1)}(\pi_{l_2}\lambda))V_k\biggr|^2\\
=\sum_{1\le l_1\neq l_2\le m}\biggl|\sum_{j=k+1}^{m-1} (\sigma_{m-j}^{(m-1)}(\pi_{l_1}\lambda)-\sigma_{m-j}^{(m-1)}(\pi_{l_2}\lambda))V_j+ (\sigma_{m-k}^{(m-1)}(\pi_{l_1}\lambda)-\sigma_{m-k}^{(m-1)}(\pi_{l_2}\lambda))V_k\biggr|^2.
\end{multline*}
By applying Lemma \ref{lemma1} and the condition \eqref{LC} we obtain the following bound from below:
\begin{multline*}
\sum_{i=1}^m\biggl|\sum_{j=k+1}^m \sigma_{m-j}^{(m-1)}(\pi_i\lambda)V_j+\sigma_{m-k}^{(m-1)}(\pi_i\lambda)V_k\biggr|^2\succ \sum_{1\le l_1\neq l_2\le m}\biggl|-(\lambda_{l_2}-\lambda_{l_1})V_{m-1}+\\
\sum_{j=k+1}^{m-2}(-1)^{m-j}(\lambda_{l_2}-\lambda_{l_1})\sum_{\substack{i_h\neq l_1,\, i_h\neq l_2\\ 1\le i_1<i_2<\cdots<i_{m-j-1}\le m}} (\lambda_{i_1}\lambda_{i_2}\cdots\lambda_{i_{m-j-1}})V_j\\
+(-1)^{m-k}(\lambda_{l_2}-\lambda_{l_1})
\sum_{\substack{i_h\neq l_1,\, i_h\neq l_2\\ 1\le i_1<i_2<\cdots<i_{m-k-1}\le m}} (\lambda_{i_1}\lambda_{i_2}\cdots\lambda_{i_{m-k-1}})V_k\biggr|^2\\
\succ \sum_{1\le l_1\neq l_2\le m}(\lambda_{l_1}^2+\lambda_{l_2}^2)\biggl|-V_{m-1}+\sum_{j=k+1}^{m-2}(-1)^{m-j}\sum_{\substack{i_h\neq l_1,\, i_h\neq l_2\\ 1\le i_1<i_2<\cdots<i_{m-j-1}\le m}} (\lambda_{i_1}\lambda_{i_2}\cdots\lambda_{i_{m-j-1}})V_j\\
+(-1)^{m-k}\sum_{\substack{i_h\neq l_1,\, i_h\neq l_2\\ 1\le i_1<i_2<\cdots<i_{m-k-1}\le m}} (\lambda_{i_1}\lambda_{i_2}\cdots\lambda_{i_{m-k-1}})V_k\biggr|^2.
\end{multline*}
Noting that
\[
(-1)^{m-j}\sum_{\substack{i_h\neq l_1,\, i_h\neq l_2\\ 1\le i_1<i_2<\cdots<i_{m-j-1}\le m}} (\lambda_{i_1}\lambda_{i_2}\cdots\lambda_{i_{m-j-1}})= -\sigma^{(m-2)}_{m-1-j}(\pi_{l_2}(\pi_{l_1}\lambda))
\]
for $j=k,...,m-2$, we write the estimate above as
\begin{multline}
\label{induc_est}
\sum_{i=1}^m\biggl|\sum_{j=k+1}^m \sigma_{m-j}^{(m-1)}(\pi_i\lambda)V_j+\sigma_{m-k}^{(m-1)}(\pi_i\lambda)V_k\biggr|^2\\
\succ\sum_{1\le l_1\neq l_2\le m}(\lambda_{l_1}^2+\lambda_{l_2}^2)\biggl|-V_{m-1}-\sum_{j=k+1}^{m-2}\sigma^{(m-2)}_{m-1-j}(\pi_{l_2}(\pi_{l_1}\lambda)) V_j-\sigma^{(m-2)}_{m-1-k}(\pi_{l_2}(\pi_{l_1}\lambda))V_k\biggr|^2\\
=\sum_{1\le l_1\neq l_2\le m}(\lambda_{l_1}^2+\lambda_{l_2}^2)\biggl|V_{m-1}+\sum_{j=k+1}^{m-2}\sigma^{(m-2)}_{m-1-j}(\pi_{l_2}(\pi_{l_1}\lambda)) V_j+\sigma^{(m-2)}_{m-1-k}(\pi_{l_2}(\pi_{l_1}\lambda))V_k\biggr|^2\\
=\sum_{l_1}\lambda_{l_1}^2\sum_{1\le l_1\neq l_2\le m}\biggl|V_{m-1}+\sum_{j=k+1}^{m-2}\sigma^{(m-2)}_{m-1-j}(\pi_{l_2}(\pi_{l_1}\lambda)) V_j+\sigma^{(m-2)}_{m-1-k}(\pi_{l_2}(\pi_{l_1}\lambda))V_k\biggr|^2\\
+\sum_{l_2}\lambda_{l_2}^2\sum_{1\le l_1\neq l_2\le m}\biggl|V_{m-1}+\sum_{j=k+1}^{m-2}\sigma^{(m-2)}_{m-1-j}(\pi_{l_2}(\pi_{l_1}\lambda)) V_j+\sigma^{(m-2)}_{m-1-k}(\pi_{l_2}(\pi_{l_1}\lambda))V_k\biggr|^2.
\end{multline}
By now applying the inductive hypothesis to the last two summands in \eqref{induc_est} we obtain
\begin{multline*}
\sum_{i=1}^m\biggl|\sum_{j=k+1}^m \sigma_{m-j}^{(m-1)}(\pi_i\lambda)V_j+\sigma_{m-k}^{(m-1)}(\pi_i\lambda)V_k\biggr|^2\\
\succ\sum_{l_1}\lambda_{l_1}^2\sum_{1\le l_1\neq l_2\le m}| \sigma^{(m-2)}_{m-1-k}(\pi_{l_2}(\pi_{l_1}\lambda))|^2|V_k|^2
+\sum_{l_2}\lambda_{l_2}^2\sum_{1\le l_1\neq l_2\le m}| \sigma^{(m-2)}_{m-1-k}(\pi_{l_1}(\pi_{l_2}\lambda))|^2|V_k|^2\\
\succ \sum_{i=1}^m |\sigma_{m-k}^{(m-1)}(\pi_i\lambda)|^2 |V_k|^2,
\end{multline*}
which completes the proof.
\end{proof}
\begin{proof}[Proof of Theorem \ref{theo_LC}]
By definition of the matrices $\mathcal{W}$ and $B$ we have that $|\mathcal{W}BV|^2\prec |\mathcal{W}V|^2$ is equivalent to
\beq
\label{eq_est}
\biggl|\sum_{j=1}^m B_jV_j\biggr|^2\prec\sum_{i=1}^m\biggl|\sum_{j=1}^m\sigma^{(m-1)}_{m-j}(\pi_i\lambda)V_j\biggr|^2.
\eeq
Making use of the conditions \eqref{LB} we have that the following estimate is valid on the area $\Sigma_1^{\delta_1}$:
\begin{multline*}
\biggl|\sum_{j=1}^m B_jV_j\biggr|^2\prec \sum_{j=1}^m |B_j|^2|V_j|^2\prec\sum_{j=1}^m\sum_{i=1}^m |\sigma_{m-j}^{(m-1)}(\pi_i\lambda)|^2|V_j|^2\\
\prec |V_m|^2+\sum_{j=2}^{m-1}\sum_{i=1}^m|\sigma^{(m-1)}_{m-j}(\pi_i\lambda)|^2|V_j|^2
+\sum_{i=1}^m|\sigma^{(m-1)}_{m-1}(\pi_i\lambda)|^2|V_1|^2\\
\prec (1+\delta_1)\sum_{i=1}^m|\sigma^{(m-1)}_{m-1}(\pi_i\lambda)|^2|V_1|^2\prec \sum_{i=1}^m|\sigma^{(m-1)}_{m-1}(\pi_i\lambda)|^2|V_1|^2.
\end{multline*}
Setting $k=1$ in Lemma \ref{lemma2} we obtain the bound from below
\begin{multline*}
\sum_{i=1}^m\biggl|\sum_{j=1}^m\sigma^{(m-1)}_{m-j}(\pi_i\lambda)V_j\biggr|^2= \sum_{i=1}^m\biggl|\sum_{j=2}^m\sigma^{(m-1)}_{m-j}(\pi_i\lambda)V_j+\sigma^{(m-1)}_{m-1}(\pi_i\lambda)V_1\biggr|^2\\
\succ \sum_{i=1}^m|\sigma^{(m-1)}_{m-1}(\pi_i\lambda)|^2|V_1|^2.
\end{multline*}
This proves the inequality \eqref{eq_est} on $\Sigma_{1}^{\delta_1}$ for any $\delta_1>0$.

Let us now assume that $V\in\big(\Sigma_1^{\delta_1}\big)^{\rm{c}}\cap\big(\Sigma_2^{\delta_2}\big)^{\rm{c}}\cap\cdots\cap
\big(\Sigma_{k-1}^{\delta_{k-1}}\big)^{\rm{c}}\cap\Sigma_k^{\delta_k}$ for $2\le k\le m-1$. By definition of the regions $\Sigma_h^{\delta_h}$ and taking $\delta_h\ge 1$ for $1\le h\le k-1$ we have that
\begin{multline*}
\sum_{i=1}^m |\sigma^{(m-1)}_{m-(k-1)}(\pi_i\lambda)|^2|V_{k-1}|^2 <\frac{1}{\delta_{k-1}}\biggl(|V_m|^2+\sum_{j=k+1}^{m-1}\sum_{i=1}^m|\sigma^{(m-1)}_{m-j}(\pi_i\lambda)|^2|V_j|^2\\
+\sum_{i=1}^m|\sigma^{(m-1)}_{m-k}(\pi_i\lambda)|^2|V_k|^2\biggl) \le \frac{1}{\delta_{k-1}}(1+\delta_k)\sum_{i=1}^m|\sigma^{(m-1)}_{m-k}(\pi_i\lambda)|^2|V_k|^2,
\end{multline*}
\begin{multline*}
\sum_{i=1}^m |\sigma^{(m-1)}_{m-(k-2)}(\pi_i\lambda)|^2|V_{k-2}|^2 <\frac{1}{\delta_{k-2}}\biggl(|V_m|^2+\sum_{j=k+1}^{m-1}\sum_{i=1}^m|\sigma^{(m-1)}_{m-j}(\pi_i\lambda)|^2|V_j|^2\\
+\sum_{i=1}^m|\sigma^{(m-1)}_{m-k}(\pi_i\lambda)|^2|V_k|^2+
\sum_{i=1}^m|\sigma^{(m-1)}_{m-(k-1)}(\pi_i\lambda)|^2|V_{k-1}|^2 \biggl)\\
\le\frac{1}{\delta_{k-2}}\big(1+\delta_k+\frac{1}{\delta_{k-1}}(1+\delta_k)\big)
\sum_{i=1}^m|\sigma^{(m-1)}_{m-k}(\pi_i\lambda)|^2|V_k|^2,\\
\le (1+\delta_k)\big(\frac{1}{\delta_{k-1}}+\frac{1}{\delta_{k-2}})\sum_{i=1}^m|\sigma^{(m-1)}_{m-k}(\pi_i\lambda)|^2|V_k|^2.
\end{multline*}
By iteration one can easily prove the following bound
\beq
\label{bound}
\sum_{i=1}^m |\sigma^{(m-1)}_{m-j}(\pi_i\lambda)|^2|V_{j}|^2\le (1+\delta_k)\sum_{h=1}^{k-1}\frac{1}{\delta_h}\sum_{i=1}^m|\sigma^{(m-1)}_{m-k}(\pi_i\lambda)|^2|V_k|^2,
\eeq
valid on the region $\big(\Sigma_1^{\delta_1}\big)^{\rm{c}}\cap\big(\Sigma_2^{\delta_2}\big)^{\rm{c}}\cap\cdots\cap
\big(\Sigma_{k-1}^{\delta_{k-1}}\big)^{\rm{c}}\cap\Sigma_k^{\delta_k}$ for all $j$ with $1\le j\le k-1$.

It follows that
\begin{multline*}
\biggl|\sum_{j=1}^m B_jV_j\biggr|^2\prec \sum_{j=1}^m |B_j|^2|V_j|^2\prec\sum_{j=1}^m\sum_{i=1}^m |\sigma_{m-j}^{(m-1)}(\pi_i\lambda)|^2|V_j|^2\\
\prec\sum_{j=k+1}^m\sum_{i=1}^m |\sigma_{m-j}^{(m-1)}(\pi_i\lambda)|^2|V_j|^2+\sum_{i=1}^m |\sigma_{m-k}^{(m-1)}(\pi_i\lambda)|^2|V_k|^2+\sum_{j=1}^{k-1}\sum_{i=1}^m |\sigma_{m-j}^{(m-1)}(\pi_i\lambda)|^2|V_j|^2\\
\prec \sum_{i=1}^m |\sigma_{m-k}^{(m-1)}(\pi_i\lambda)|^2|V_k|^2.
\end{multline*}
We now pass to estimate the right-hand side of \eqref{eq_est} making use of Lemma \ref{lemma2} and of the bound \eqref{bound}. We obtain
\begin{multline*}
\sum_{i=1}^m\biggl|\sum_{j=1}^m\sigma^{(m-1)}_{m-j}(\pi_i\lambda)V_j\biggr|^2\\
\succ \sum_{i=1}^m\gamma_1\biggl|\sum_{j=k+1}^{m}\sigma^{(m-1)}_{m-j}(\pi_i\lambda)V_j
+\sigma^{(m-1)}_{m-k}(\pi_i\lambda)V_k\biggr|^2-
\gamma_2\sum_{i=1}^m\sum_{j=1}^{k-1}|\sigma^{(m-1)}_{m-j}(\pi_i\lambda)|^2|V_j|^2\\
\succ \gamma_1\sum_{i=1}^m|\sigma_{m-k}^{(m-1)}(\pi_i\lambda)|^2|V_k|^2-\gamma_2
(1+\delta_k)\sum_{h=1}^{k-1}\frac{1}{\delta_h}\sum_{i=1}^m|\sigma^{(m-1)}_{m-k}(\pi_i\lambda)|^2|V_k|^2\\
=\biggl(\gamma_1-\gamma_2(1+\delta_k)\sum_{h=1}^{k-1}\frac{1}{\delta_h}\biggr)
\sum_{i=1}^m|\sigma^{(m-1)}_{m-k}(\pi_i\lambda)|^2|V_k|^2.
\end{multline*}
Therefore, the estimate \eqref{eq_est} holds in the region $\big(\Sigma_1^{\delta_1}\big)^{\rm{c}}\cap\big(\Sigma_2^{\delta_2}\big)^{\rm{c}}\cap\cdots\cap
\big(\Sigma_{k-1}^{\delta_{k-1}}\big)^{\rm{c}}\cap\Sigma_k^{\delta_k}$ for any $\delta_k>0$ choosing $\delta_1, \delta_2,...,\delta_{k-1}$ big enough.

We conclude the proof by assuming $V\in\big(\Sigma_1^{\delta_1}\big)^{\rm{c}}\cap\big(\Sigma_2^{\delta_2}\big)^{\rm{c}}\cap\cdots\cap
\big(\Sigma_{m-1}^{\delta_{m-1}}\big)^{\rm{c}}$. Since
\[
|V_m|^2+\sum_{j=h+1}^{m-1}\sum_{i=1}^m|\sigma^{(m-1)}_{m-j}(\pi_i\lambda)|^2|V_j|^2>
\delta_h\sum_{i=1}^m|\sigma^{(m-1)}_{m-h}(\pi_i\lambda)|^2|V_h|^2,
\]
for $1\le h\le m-1$, arguing as above and taking $\delta_h\ge 1$ we obtain the estimate
\beq
\label{bound2}
\sum_{i=1}^m |\sigma^{(m-1)}_{m-j}(\pi_i\lambda)|^2|V_{j}|^2\le \sum_{h=1}^{m-1}\frac{1}{\delta_h}|V_m|^2
\eeq
valid on the region $\big(\Sigma_1^{\delta_1}\big)^{\rm{c}}\cap\big(\Sigma_2^{\delta_2}\big)^{\rm{c}}\cap\cdots\cap
\big(\Sigma_{m-1}^{\delta_{m-1}}\big)^{\rm{c}}$ for all $j$ with $1\le j\le m-1$.
Hence,
\[
\biggl|\sum_{j=1}^m B_jV_j\biggr|^2\prec \sum_{j=1}^m |B_j|^2|V_j|^2\prec\sum_{j=1}^m\sum_{i=1}^m |\sigma_{m-j}^{(m-1)}(\pi_i\lambda)|^2|V_j|^2\prec |V_m|^2
\]
and
\begin{multline*}
\sum_{i=1}^m\biggl|\sum_{j=1}^m\sigma^{(m-1)}_{m-j}(\pi_i\lambda)V_j\biggr|^2\succ \gamma_1|V_m|^2-\gamma_2\sum_{i=1}^m\biggl|\sum_{j=1}^{m-1}
\sigma^{(m-1)}_{m-j}(\pi_i\lambda)V_j\biggr|^2\\
\succ \gamma_1|V_m|^2-\gamma_2\sum_{j=1}^{m-1}\sum_{i=1}^m|\sigma^{(m-1)}_{m-j}(\pi_i\lambda)|^2|V_j|^2\\
\succ \biggl(\gamma_1-\gamma_2(m-1)\sum_{h=1}^{m-1}\frac{1}{\delta_h}\biggr)|V_m|^2.
\end{multline*}
This means that the inequality \eqref{eq_est} holds on $\big(\Sigma_1^{\delta_1}\big)^{\rm{c}}\cap\big(\Sigma_2^{\delta_2}\big)^{\rm{c}}\cap\cdots\cap
\big(\Sigma_{m-1}^{\delta_{m-1}}\big)^{\rm{c}}$ for sufficiently large values of $\delta_1,\delta_2,...,\delta_{m-1}$.
\end{proof}

\section{Well-posedness results}
\label{SEC:wp}

We are now ready to prove the well-posedness results
given in Theorem \ref{theo_GRi}. For the advantage of the reader and the sake of simplicity we reformulate Theorem \ref{theo_GRi} as the following Theorem \ref{theo_GR} where we make use of the language and notations introduced in Theorem \ref{theo_LC}.
\begin{thm}
\label{theo_GR}
Assume $A_j\in{C}([0,T])$ for all $j$.
If the coefficients $A_{(j)}\in C^\infty([0,T])$, the characteristic roots are real and satisfy \eqref{LC} and the entries of the matrix $B$ of the lower order terms in \eqref{system_new} fulfill the conditions \eqref{LB} for $\xi$ away from $0$ then the Cauchy problem \eqref{CP} is well-posed in any Gevrey space. More precisely,
\begin{itemize}
\item[(i)] if $A_{(j)}\in{C}^k([0,T])$ for some $k\ge 2$ and $g_j\in G^s(\R^n)$ for
$j=1,...m,$ then there exists a unique solution $u\in C^m([0,T];G^s(\R^n))$ for
\[
1\le s<1+\frac{k}{2(m-1)};
\]
\item[(ii)] if $A_{(j)}\in{C}^k([0,T])$ for some $k\ge 2$ and $g_j\in \mathcal{E}'_{(s)}(\R^n)$ for $j=1,...m,$ then there exists a unique solution $u\in C^m([0,T];\mathcal{D}'_{(s)}(\R^n))$ for
\[
1\le s\le 1+\frac{k}{2(m-1)}.
\]
\end{itemize}
\end{thm}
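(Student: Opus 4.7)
Combining the three pieces of the decomposition of $\partial_t E_\eps$ worked out in Section \ref{SEC:se} with the bound \eqref{cB} on the lower-order term, which is supplied by Theorem \ref{theo_LC}, the estimate \eqref{EE} becomes
\[
\partial_t E_\eps(t,\xi) \leq \bigl(K_\eps(t,\xi) + C_2\,\eps\lara{\xi} + C_3\bigr) E_\eps(t,\xi),
\]
with $\int_0^T K_\eps(t,\xi)\,dt \leq C_1 \eps^{-2(m-1)/k}$ uniformly in $\xi$. Gronwall's inequality followed by the coercivity/boundedness bounds \eqref{p2} on $Q_\eps^{(m)}$ yields
\[
|V(t,\xi)|^2 \leq C \eps^{-2(m-1)}\,|V(0,\xi)|^2 \exp\!\bigl(C_1 \eps^{-2(m-1)/k} + C_2 T \eps \lara{\xi} + C_3 T\bigr).
\]

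Next I optimise the free parameter $\eps\in(0,1]$ as a function of $\xi$. Balancing the two exponents requires $\eps\lara{\xi} \sim \eps^{-2(m-1)/k}$, which gives the choice
\[
\eps = \lara{\xi}^{-k/(k+2(m-1))}\qquad (\text{valid for $\lara{\xi}\ge 1$}).
\]
Both exponents then become a positive multiple of $\lara{\xi}^{1/s_0}$, where
\[
s_0 := 1 + \frac{k}{2(m-1)}, \qquad \frac{1}{s_0} = \frac{2(m-1)}{k+2(m-1)},
\]
while the prefactor $\eps^{-2(m-1)}$ is polynomially bounded in $\lara{\xi}$. Thus there exist constants $C,C',N>0$ such that
\[
|V(t,\xi)| \leq C\,\lara{\xi}^{N}\,|V(0,\xi)|\,\exp\!\bigl(C'\lara{\xi}^{1/s_0}\bigr),\qquad t\in[0,T],\ \lara{\xi}\geq 1.
\]

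I then conclude via the Fourier-side characterisations of Gevrey functions and Gevrey Beurling ultradistributions. Since $|\widehat{g}_j(\xi)| \leq \lara{\xi}^{m-1}|v_{0,l}(\xi)|$ controls $|V(0,\xi)|$ polynomially in terms of the initial data, for (i) with $1\leq s < s_0$ the hypothesis $g_j\in G^s(\R^n)$ gives $|V(0,\xi)|\leq C e^{-\delta\lara{\xi}^{1/s}}$ for some $\delta>0$; since $1/s>1/s_0$, the factor $e^{C'\lara{\xi}^{1/s_0}}$ and the polynomial $\lara{\xi}^{N+m-1}$ are absorbed, so $|V(t,\xi)|\leq C_1 e^{-\delta_1\lara{\xi}^{1/s}}$ for some $\delta_1>0$, yielding $u(t,\cdot)\in G^s(\R^n)$. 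For (ii) with $1\leq s\leq s_0$, the hypothesis $g_j\in\mathcal{E}'_{(s)}(\R^n)$ gives $|V(0,\xi)|\leq C_\delta e^{\delta\lara{\xi}^{1/s}}$ for \emph{every} $\delta>0$; picking $\delta$ small and using $1/s\geq 1/s_0$ gives $|V(t,\xi)|\leq \widetilde{C}_\delta e^{(\delta+C')\lara{\xi}^{1/s}}$, which places $u(t,\cdot)$ in $\mathcal{D}'_{(s)}(\R^n)$. The crucial use of the Beurling (rather than Roumieu) class is exactly here: the endpoint $s=s_0$ is admissible precisely because $g_j$ is allowed arbitrarily small exponential type.

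Finally, the solution $u$ of \eqref{CP} is recovered by $u = \lara{D_x}^{-(m-1)}u_1$ where $u_1$ is the inverse Fourier transform of the first component of $V$; uniqueness follows by applying the same energy inequality to the difference of two solutions. Time regularity $u\in C^m([0,T];G^s(\R^n))$ in case (i), respectively $u\in C^m([0,T];\mathcal{D}'_{(s)}(\R^n))$ in case (ii), follows by successive differentiation of the equation \eqref{CP} in $t$, using the $C^k$-regularity ($k\geq 2$) of the principal symbol and the continuity of $A_j(\cdot,\xi)$ in $t$. The main technical point — already absorbed in Theorem \ref{theo_LC} — is the control of the lower-order contribution $((Q_0^{(m)}B-B^\ast Q_0^{(m)})V,V)$ by $E_\eps$ independently of $\eps$, since only this prevents the optimisation in $\eps$ from being destroyed by $B$; everything else is a now-standard application of the quasi-symmetriser machinery of Section \ref{SEC:qs}.
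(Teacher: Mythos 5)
Your argument follows the paper's own proof almost line by line: energy inequality from the quasi-symmetriser, Gronwall, balancing $\eps^{-2(m-1)/k}=\eps\lara{\xi}$ to get the exponent $\lara{\xi}^{1/\sigma}$ with $\sigma=1+k/(2(m-1))$, coercivity \eqref{p2} to pass from $E_\eps$ to $|V|$, and then the Fourier-side characterisations to close. Two small things are worth flagging. First, the paper reduces at the outset to compactly supported data via finite propagation speed (and disposes of $s=1$ by Bony--Shapira); you should say this explicitly, since the Paley--Wiener type characterisations you invoke are for compactly supported Gevrey functions and for $\mathcal{E}'_{(s)}$. Second, and more substantively, your stated characterisation of $\mathcal{E}'_{(s)}$ is backwards: for the \emph{Beurling} class $\mathcal{E}'_{(s)}$ the Paley--Wiener bound is that there \emph{exist} $C,\delta>0$ with $|\widehat{g}(\xi)|\le C\,\esp^{\delta\lara{\xi}^{1/s}}$; the ``for every $\delta>0$'' version you wrote is the \emph{Roumieu} characterisation of $\mathcal{E}'_{\{s\}}$. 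Consequently your explanation that ``the endpoint $s=s_0$ is admissible precisely because $g_j$ is allowed arbitrarily small exponential type'' misidentifies where Beurling is actually used: the point is not that the data has arbitrarily small type (it need not), but that the \emph{target} space $\mathcal{D}'_{(s)}$, being the Beurling dual, is characterised by \emph{some} exponential bound $\esp^{\delta'\lara{\xi}^{1/s}}$, so the fixed constant $C'$ coming from the energy estimate at $s=\sigma$ is harmless. With the characterisation corrected the same one-line estimate you wrote still gives $|V(t,\xi)|\le c'\esp^{\delta'\lara{\xi}^{1/s}}$ for some $\delta'>0$, which is exactly what membership in $\mathcal{D}'_{(s)}$ requires, so the proof is recoverable with only this fix.
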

\begin{proof}
As usual, the well-posedness in the case of $s=1$ follows from the result of
Bony and Shapira, so we may assume $s>1$. By finite propagation speed for hyperbolic equations it is not restrictive to take compactly supported initial data and therefore to have that the solution $u$ is compactly supported in $x$.

Combining the energy estimate \eqref{EE} with the estimates of the first, second and third term in Section \ref{SEC:se} we obtain the estimate
\[
\partial_t E_\eps(t,\xi)\le (C_1\eps^{-2(m-1)/k}+C_2\eps\lara{\xi}+C_3)E_\eps(t,\xi),
\]
valid for some positive constants $C_1, C_2, C_3$, for $t\in[0,T]$ and $|\xi|\ge R$.
Here, the estimate for the third term is provided by Theorem \ref{theo_LC} for $|\xi|\ge R$.
A straightforward application of Gronwall's lemma leads to
\[
E_\eps(t,\xi)\le E_\eps(0,\xi)\esp^{C_1T\eps^{-2(m-1)/k}+C_2T\eps\lara{\xi}+C_3T}\le E_\eps(0,\xi)C_T\esp^{C_T(\eps^{-2(m-1)/k}+\eps\lara{\xi})}.
\]
Setting $\eps^{-2(m-1)/k}=\eps\lara{\xi}$ we get
\[
E_\eps(t,\xi)\le E_\eps(0,\xi)C_T\esp^{C_T\lara{\xi}^{\frac{1}{\sigma}}},
\]
where $\sigma=1+k/[2(m-1)]$. Finally, making use of the inequality \eqref{p2} we arrive at
\[
C_m^{-1}\eps^{2(m-1)}|V(t,\xi)|^2\le E_\eps(t,\xi)\le E_\eps(0,\xi)C_T\esp^{C_T\lara{\xi}^{\frac{1}{\sigma}}}\le C_m|V(0,\xi)|^2C_T\esp^{C_T\lara{\xi}^{\frac{1}{\sigma}}},
\]
which implies
\beq
\label{est_V}
|V(t,\xi)|\le C\lara{\xi}^{\frac{k}{2\sigma}}\esp^{C\lara{\xi}^{\frac{1}{\sigma}}}|V(0,\xi)|,
\eeq
for some new constant $C>0$, for $t\in[0,T]$ and $|\xi|\ge R$.\\

(i) Recall that $V(t,\xi)=\mathcal{F}_{x\to\xi}U(t,x)$, where $U$ is the $u_j$'s column vector. If the initial data $g_l$ belong to $G^s_0(\R^n)$ from the Fourier transform characterisation of Gevrey functions (\cite[Proposition 2.2]{GR:11}) we have that $|V(0,\xi)|\le c\,\esp^{-\delta\lara{\xi}^{\frac{1}{s}}}$ for some constants $c>0$ and $\delta>0$. Hence,
\[
|V(t,\xi)|\le C\lara{\xi}^{\frac{k}{2\sigma}}\esp^{C\lara{\xi}^{\frac{1}{\sigma}}}c\,\esp^{-\delta\lara{\xi}^{\frac{1}{s}}}
\]
for all $t\in[0,T]$ and $\xi\in\R^n$. Let $s<\sigma$. Then
$V(t,\xi)$ defines a tempered distribution in $\S'(\R^n)$ such that
\[
|V(t,\xi)|\le Cc\lara{\xi}^{\frac{k}{2\sigma}}\esp^{C\lara{\xi}^{\frac{1}{\sigma}}}
\esp^{-\frac{\delta}{2}\lara{\xi}^{\frac{1}{s}}}\esp^{-\frac{\delta}{2}\lara{\xi}^{\frac{1}{s}}}\\
\le Cc\lara{\xi}^{\frac{k}{2\sigma}}\esp^{\lara{\xi}^{\frac{1}{\sigma}}
(C-\frac{\delta}{2}\lara{\xi}^{\frac{1}{s}-\frac{1}{\sigma}})}
\esp^{-\frac{\delta}{2}\lara{\xi}^{\frac{1}{s}}}.
\]
It follows that
\begin{equation}\label{estv1}
|V(t,\xi)|\le c'\esp^{-\frac{\delta}{2}\lara{\xi}^{\frac{1}{s}}},
\end{equation}
for some $c',\delta>0$ and for $|\xi|$ large enough. This is sufficient to prove that $U(t,x)$ belongs to the Gevrey class $G^s(\R^n)$ for all $t\in[0,T]$ and that the Cauchy problem \eqref{CP} has a unique solution $u\in C^m([0,T];G^s(\R^n))$ for $s<\sigma$ under the assumptions of case (i).\\

(ii) If the initial data $g_l$ are Gevrey Beurling ultradistributions in $\mathcal{E}'_{(s)}(\R^n)$, from the Fourier transform characterisation of ultradistributions (\cite[Proposition 2.13]{GR:11}) we have that there exist $\delta>0$ and $c>0$ such that $|V(0,\xi)|\le c\,\esp^{\delta\lara{\xi}^{\frac{1}{s}}}$ for all $\xi\in\R^n$.
Hence, taking $s\le\sigma$, we obtain the estimate
\[
|V(t,\xi)|\le Cc\lara{\xi}^{\frac{k}{2\sigma}}\esp^{C\lara{\xi}^{\frac{1}{\sigma}}}
\esp^{+\delta\lara{\xi}^{\frac{1}{s}}}\le c'\esp^{+\delta'\lara{\xi}^{\frac{1}{s}}}
\]
for some $c',\delta'>0$. This proves that the Cauchy problem \eqref{CP} has a unique solution $u\in C^m([0,T];\mathcal{D}'_{(s)}(\R^n))$ for $s\le\sigma$ under the assumptions of case (ii).
\end{proof}
We pass to consider the case of analytic coefficients. We prove $C^\infty$ and distributional well-posedness of the Cauchy problem \eqref{CP} providing an extension of Theorem 1 in \cite{KS} to any space dimension. Our proof makes use of the following lemma on analytic functions,
a parameter-dependent version of the statement (61)-(62) in \cite{KS}.
\begin{lem}
\label{lem_par}
Let $f(t,\xi)$ be an analytic function in $t\in[0,T]$, continuous and homogeneous of order $0$ in $\xi\in\R^n$. Then,
\begin{itemize}
\item[(i)] for all $\xi$ there exists a partition $(\tau_{h(\xi)})$ of the interval $[0,T]$ such that
\[
0=\tau_0<\tau_1<\cdots<\tau_{h(\xi)}<\cdots<\tau_{N(\xi)}=T
\]
with $\sup_{\xi\neq 0}N(\xi)<\infty$, such that
\item[(ii)] there exists $C>0$ such that
\[
|\partial_t f(t,\xi)|\le C\biggl(\frac{1}{t-\tau_{h(\xi)}}+\frac{1}{\tau_{{h+1}(\xi)}-t}\biggr)|f(t,\xi)|
\]
for all $t\in(\tau_{h(\xi)},\tau_{{h+1}(\xi)})$, $\xi\in\R^n$ with $\xi\neq 0$ and $0\le h(\xi)\le N(\xi)$.
\end{itemize}
\end{lem}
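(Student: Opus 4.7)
My plan is to reduce to a compact parameter set via homogeneity and then combine the single-variable estimate behind \cite[(61)--(62)]{KS} with a uniform bound on the number of zeros coming from the argument principle.

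Since $f(t,\xi)$ is homogeneous of degree $0$ in $\xi$, one has $f(t,\xi)=f(t,\xi/|\xi|)$, and it suffices to work with $\omega:=\xi/|\xi|\in S^{n-1}$. The analyticity of $f(\cdot,\omega)$ in $t\in[0,T]$, together with continuity in $\omega$ and compactness of $S^{n-1}$, provides a fixed complex neighbourhood $U\subset\C$ of $[0,T]$ such that $z\mapsto f(z,\omega)$ is holomorphic on $U$ for every $\omega\in S^{n-1}$, with $f$ jointly continuous on $\overline{U'}\times S^{n-1}$ for each $U'\Subset U$. Assuming $f(\cdot,\omega)\not\equiv 0$ for every $\omega$ (the other case being vacuous), the zeros of $f(\cdot,\omega)$ in $(0,T)$ form a finite set which we list in increasing order and complete with $\tau_0(\omega)=0$ and $\tau_{N(\omega)}(\omega)=T$; this already yields~(i).

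The next step is the uniform bound $\sup_\omega N(\omega)<\infty$. Fix a closed curve $\gamma\subset U\setminus[0,T]$ enclosing $[0,T]$. By the argument principle, the number of zeros (counted with multiplicity) of $f(\cdot,\omega)$ inside $\gamma$ equals
\[
\frac{1}{2\pi i}\oint_\gamma\frac{\partial_z f(z,\omega)}{f(z,\omega)}\,dz.
\]
A small local perturbation of $\gamma$ near each $\omega_0$ ensures that $f$ does not vanish on the contour, and joint continuity together with a finite covering of $S^{n-1}$ makes this integral integer-valued and locally constant in $\omega$, hence bounded by some $N_0\in\N$. In particular $N(\omega)\le N_0$ and the total multiplicity satisfies $\sum_j m_j(\omega)\le N_0$.

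For the pointwise bound, factor $f(t,\omega)=\prod_j(t-\tau_j(\omega))^{m_j(\omega)}g(t,\omega)$ with $g(\cdot,\omega)$ holomorphic and nowhere zero on a fixed neighbourhood of $[0,T]$. Logarithmic differentiation gives
\[
\frac{\partial_t f(t,\omega)}{f(t,\omega)}=\sum_{j=1}^{N(\omega)-1}\frac{m_j(\omega)}{t-\tau_j(\omega)}+\frac{\partial_t g(t,\omega)}{g(t,\omega)}.
\]
For $t\in(\tau_h(\omega),\tau_{h+1}(\omega))$ the non-adjacent summands are absorbed into $C\bigl((t-\tau_h(\omega))^{-1}+(\tau_{h+1}(\omega)-t)^{-1}\bigr)$ using $m_j(\omega)\le N_0$, and the holomorphic term $\partial_t g/g$ admits an $\omega$-independent bound by Cauchy estimates together with a uniform lower bound on $|g|$, both flowing from the compactness of $S^{n-1}$. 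The main obstacle is exactly this uniformity: producing a $\xi$-independent complex strip of analyticity and a $\xi$-independent count of zeros. Continuity of the labelling $\tau_j(\omega)$ itself is not required, as only the resulting partition of $[0,T]$ enters the estimate.
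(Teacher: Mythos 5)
Your route is genuinely different from the paper's at both key steps. For the uniform bound on $N(\xi)$, the paper argues by sequential compactness: if $N(\xi_l)\to\infty$ with $\xi_l\to\xi'$ on $S^{n-1}$, the zeros of $f(\cdot,\xi_l)$ would accumulate and force $f(\cdot,\xi')$ to have infinitely many zeros, contradicting analyticity. You instead invoke the argument principle over a fixed contour, making the count integer-valued and locally constant — a cleaner, more quantitative argument when it applies. For the pointwise estimate, the paper works one subinterval $(\tau_h,\tau_{h+1})$ at a time, extracting only the two endpoint zeros in a factorisation $t^{\nu_0}(\tau_1-t)^{\nu_1}g$, and runs a local estimate near each $\xi_0$ followed by a finite covering; you instead extract all zeros simultaneously via the full logarithmic derivative and absorb the non-adjacent poles. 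Both get to the same place, but yours requires slightly more global information to justify.

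There is, however, a genuine gap in the opening step. The claim that analyticity of $f(\cdot,\omega)$ on $[0,T]$ for each fixed $\omega$, together with joint continuity and compactness of $S^{n-1}$, yields a \emph{fixed} complex neighbourhood $U\supset[0,T]$ of uniform holomorphy is false in general. The radius of holomorphy around $[0,T]$ can degenerate as $\omega$ varies: for instance $f(t,\omega)=f_\infty(t)+a(\omega)^3\big(a(\omega)^2+(t-t_0)^2\big)^{-1}$ with $a\geq 0$ continuous and $a(\omega_0)=0$ is jointly continuous and analytic in $t$ for every $\omega$, yet the poles $t_0\pm\mathrm{i}\,a(\omega)$ approach $[0,T]$. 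Your entire argument-principle step, and the uniform Cauchy estimate on $\partial_t g/g$, hinge on this uniform strip, so this is a real assumption rather than a consequence of compactness. The paper avoids committing to a global complex extension by working locally in $\xi$; in particular, it uses upper semicontinuity of the multiplicities $\nu_0,\nu_1$ (``perturbations cannot increase the multiplicity'') rather than a contour count. A related point: the lower bound $|g|\geq c_0$ that you attribute to compactness is not automatic either, since $g=f/\prod_j(t-\tau_j)^{m_j}$ is not a jointly continuous family — the $\tau_j$ and $m_j$ jump with $\omega$ — so an additional argument (or stronger structural hypotheses on $f$, such as convergence of $t$-derivatives) is needed before compactness can deliver a uniform constant.
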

\begin{proof}
Since the function $f$ is homogeneous of order $0$ in $\xi$ we can assume $|\xi|=1$. Excluding the trivial case $f\equiv 0$ we have that $f(t,\xi)$ has a finite number of zeroes in $[0,T]$ and hence we can find a partition $(\tau_{h(\xi)})$ as in (i) such that $f(t,\xi)\neq 0$ in each interval $(\tau_{h(\xi)},\tau_{h+1(\xi)})$, taking $\tau_{h(\xi)}$,
$1\leq h(\xi)\leq N(\xi)-1$, to be the zeros of $f(\cdot,\xi)$.

Note that the function $N(\xi)$ is locally bounded and, therefore, by homogeneity $\sup_{\xi\neq 0}N(\xi)=\sup_{|\xi|=1}N(\xi)<\infty$. Indeed, if $\sup_{|\xi|=1}N(\xi)=+\infty$ we can find a sequence of points $(\xi_l)_l$ with $|\xi_l|=1$ and some $\xi'$ with $|\xi'|=1$ such that $\xi_l\to\xi'$ and $N(\xi_l)\to +\infty$ as $l\to\infty$. It follows that $f(t,\xi')$ must have infinite zeros in $t$ in contradiction with the hypothesis of analyticity on $[0,T]$.

We now work on the interval $(0,\tau_1)$. By the analiticity in $t$ we can write
\[
f(t,\xi)=t^{\nu_0(\xi)}(\tau_1-t)^{\nu_1(\xi)}g(t,\xi)
\]
where $g(t,\xi)$ is an analytic function in $t$ never vanishing on $[0,\tau_1]$ homogeneous of degree $0$ in $\xi$. Note that the functions $\nu_0$ and $\nu_1$ are positive and have
local maxima at all points (perturbations in $\xi$ in a sufficiently small neighborhood
can not increase the multiplicity). Arguing as in \cite[p.566]{KS} we write
$t|\partial_t f(t,\xi)|$ as
\[
\biggl|f(t)\biggl(\nu_0(\xi)-\frac{\nu_1(\xi)t}{(\tau_1-t)}+\frac{t\partial_t g(t,\xi)}{g(t,\xi)}\biggr)\biggr|
\]
Let us fix $\xi_0$ with $|\xi_0|=1$. Taking $t$ in $[0,\tau_1/2]$ and $\xi$ in a sufficiently small neighborhood of $\xi_0$ we have that $\nu_0(\xi)\le c_1$, $\nu_1(\xi)t/(\tau_1-t)
\le c_2$, $|g(t,\xi)|\ge c_0>0$ and ${t\partial_t g(t,\xi)}/{g(t,\xi)}\le c_3$. Hence,
\[
t|\partial_t f(t,\xi)|\le C |f(t,\xi)|
\]
on $[0,\tau_1/2]$ for $\xi$ in a neighborhood of $\xi_0$. Similarly, one proves that
\[
(\tau_1-t)|\partial_t f(t,\xi)|\le C |f(t,\xi)|
\]
on $[\tau_1/2,\tau_1]$ for $\xi$ in a neighborhood of $\xi_0$. The homogeneity in $\xi$ combined with a standard compactness argument allows us to extend the inequality
\[
|\partial_t f(t,\xi)|\le C\biggl(\frac{1}{t}+\frac{1}{\tau_{1}-t}\biggr)|f(t,\xi)|
\]
to $\R^n\setminus\{0\}$ for $t\in(0,\tau_1)$. Analogously one obtains that
\[
|\partial_t f(t,\xi)|\le C\biggl(\frac{1}{t-\tau_{h(\xi)}}+\frac{1}{\tau_{{h+1}(\xi)}-t}\biggr)|f(t,\xi)|
\]
when $t\in(\tau_{h(\xi)},\tau_{{h+1}(\xi)})$ and $\xi\neq 0$.
\end{proof}
In the case of analytic coefficients, Theorem \ref{theo_GR2i} follows from the
following Theorem \ref{theo_GR2}.
\begin{thm}
\label{theo_GR2}
If $A_j\in{C}([0,T])$ and the coefficients $A_{(j)}$ are analytic on $[0,T]$, the characteristic roots are real and satisfy \eqref{LC}, and the entries of the matrix $B$ of the lower order terms in \eqref{system_new} fulfill the conditions \eqref{LB} for $\xi$ away from $0$ then the Cauchy problem \eqref{CP} is $C^\infty$ and distributionally well-posed.
\end{thm}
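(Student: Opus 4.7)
The plan is to recycle the entire energy scheme from the proof of Theorem \ref{theo_GR}: pass to the Fourier-side system \eqref{system_new}, define $E_\eps(t,\xi)=(Q^{(m)}_\eps(t,\xi)V,V)$ with the quasi-symmetriser, and decompose $\partial_t E_\eps$ as in \eqref{EE} into the three terms. Of these, the second term is controlled by property (iii) of Proposition \ref{prop_qs}, giving $C\eps\lara{\xi}E_\eps$; the third term is controlled by Theorem \ref{theo_LC} under the Levi hypothesis \eqref{LB}, giving $C_3E_\eps$ for $|\xi|\ge R$. Only the first term $K_\eps(t,\xi)E_\eps$ requires a genuinely new argument, since Lemma \ref{lem_new} (which used $C^k$ regularity) is not optimal here; we must exploit analyticity to replace the bad $\eps^{-2(m-1)/k}$ factor by something only logarithmic in $\eps$.

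The new ingredient is Lemma \ref{lem_par}. The entries of $Q^{(m)}_\eps(t,\xi)$ are symmetric polynomials in $\lambda_1(t,\xi),\dots,\lambda_m(t,\xi)$, hence polynomials in the analytic functions $A_{(j)}(t,\xi)$; after rescaling by suitable powers of $\lara{\xi}$ (as in $A(t,\xi)=\lara{\xi}^{-1}A_1(t,\xi)$) they become analytic in $t\in[0,T]$ and continuous and homogeneous of degree zero in $\xi$. Applying Lemma \ref{lem_par} to the diagonal entries $q^{(m)}_{0,ii}(t,\xi)$ (and to $\det Q_0^{(m)}=(m-1)!\prod_{i<j}(\lambda_i-\lambda_j)^2$) produces a partition $0=\tau_0(\xi)<\cdots<\tau_{N(\xi)}(\xi)=T$ with $\sup_\xi N(\xi)<\infty$, on each subinterval of which
\[
|\partial_t q^{(m)}_{0,ii}(t,\xi)|\le C\Bigl(\tfrac{1}{t-\tau_h(\xi)}+\tfrac{1}{\tau_{h+1}(\xi)-t}\Bigr)q^{(m)}_{0,ii}(t,\xi).
\]
Combining this with near-diagonality (Proposition \ref{prop_SM}), which reduces the matrix ratio $(\partial_t Q_\eps V,V)/(Q_\eps V,V)$ to a sum of scalar ratios of diagonal entries, we split each integral $\int_{\tau_h}^{\tau_{h+1}}$ into the part $|t-\tau_h(\xi)|>\eps$ and $|\tau_{h+1}(\xi)-t|>\eps$, where we obtain $\int \tfrac{dt}{|t-\tau|}\lesssim \log(1/\eps)$, and its complement of measure $O(\eps)$, on which we use the crude bound $E_\eps\ge C_m^{-1}\eps^{2(m-1)}|V|^2$ from Proposition \ref{prop_qs}(ii). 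Summing over the uniformly bounded number of intervals yields
\[
\int_0^T K_\eps(t,\xi)\,dt\le C_1\log(1/\eps)+C_1',\qquad \eps\in(0,1],\ |\xi|\ge R.
\]

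Inserting this into \eqref{EE} and applying Gronwall produces
\[
E_\eps(t,\xi)\le E_\eps(0,\xi)\,\eps^{-C_1}\exp\bigl(C_2\eps\lara{\xi}+C_3T\bigr),
\]
and then the coercivity in Proposition \ref{prop_qs}(ii) gives
\[
|V(t,\xi)|\le C\,\eps^{-(C_1+2(m-1))/2}\,e^{C_2\eps\lara{\xi}/2}\,|V(0,\xi)|.
\]
Choosing $\eps=1/\lara{\xi}$ for $|\xi|\ge R$ collapses the exponential into a constant and leaves a polynomial bound $|V(t,\xi)|\le C\lara{\xi}^{N}|V(0,\xi)|$. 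This polynomial estimate is precisely the Fourier-side characterisation of $C^\infty$ well-posedness for $C^\infty_0$ data and of distributional well-posedness for compactly supported distributional data (compare the closing step of the proof of Theorem \ref{theo_GR}, where the Gevrey exponents are replaced here by arbitrary polynomial weights).

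The main obstacle I expect is the passage from the scalar Lemma \ref{lem_par} to the matrix ratio $(\partial_t Q^{(m)}_\eps V,V)/(Q^{(m)}_\eps V,V)$ uniformly in $V\in\C^m$ and $\xi$. The near-diagonal property of Proposition \ref{prop_SM} is essential: it allows one to bound the matrix ratio by a maximum of scalar ratios of diagonal entries, for which Lemma \ref{lem_par} applies. One must also verify that the zero set structure of the analytic building blocks (diagonal entries and discriminant) gives a partition whose cardinality is locally constant and therefore globally bounded on the sphere $|\xi|=1$, a point that is handled by the compactness argument already used inside the proof of Lemma \ref{lem_par}.
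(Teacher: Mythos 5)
Your overall strategy (apply Lemma \ref{lem_par} to get a $\xi$-uniformly finite partition of $[0,T]$, reduce the matrix ratio to scalar ratios via near-diagonality, harvest $\log(1/\eps)$ from the logarithmic blow-up of $|\partial_t q|/|q|$ on the interior of each subinterval, iterate over the bounded number of subintervals, and finally set $\eps = \lara\xi^{-1}$ to get a polynomial Fourier-side bound) is exactly the paper's. However, your treatment of the $\eps$-neighbourhoods of the partition points $\tau_h(\xi)$ has a genuine gap, and it is not a cosmetic one.

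You propose to handle the set of measure $O(\eps)$ around each $\tau_h$ by invoking the coercivity $E_\eps \ge C_m^{-1}\eps^{2(m-1)}|V|^2$ to bound the integrand $K_\eps = |(\partial_t Q_\eps V,V)|/(Q_\eps V,V)$. But $|(\partial_t Q_\eps V,V)|\le C|V|^2$ combined with that lower bound gives only $K_\eps \le C\eps^{-2(m-1)}$, which over a set of measure $O(\eps)$ contributes $O(\eps^{3-2m})$ to $\int_0^T K_\eps\,dt$. For $m\ge 2$ this is far worse than $\log(1/\eps)$, and after Gronwall and the substitution $\eps=\lara\xi^{-1}$ it produces $\exp(C\lara\xi^{2m-3})$, destroying the polynomial bound you need. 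The missing idea is to \emph{abandon the quasi-symmetriser energy altogether near the $\tau_h$}: on $[\tau_h,\tau_h+\eps]\cup[\tau_{h+1}-\eps,\tau_{h+1}]$ redefine $E_\eps(t,\xi):=|V(t,\xi)|^2$. Then $\partial_t E_\eps = |((A_1-A_1^\ast)V,V)| + |((B-B^\ast)V,V)| \le C\lara\xi\,E_\eps$, so those pieces contribute only a factor $e^{C\eps\lara\xi}$, which the choice $\eps=\lara\xi^{-1}$ collapses to a constant. The two energies are comparable (uniformly in $t$ and $\xi$, with a loss $\eps^{-2(m-1)}$ when gluing back to the quasi-symmetriser energy), and this loss over a bounded number of splices gives only a polynomial in $\lara\xi$. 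That switch of energy near the endpoints is precisely what the paper does and what your argument lacks.

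A secondary point: you apply Lemma \ref{lem_par} to the diagonal entries $q^{(m)}_{0,ii}$ and to $\det Q_0^{(m)}$, but the near-diagonality reduction gives
\[
\frac{|(\partial_t Q_\eps V,V)|}{(Q_\eps V,V)} \le c_0^{-1}\sum_{i,j=1}^m \frac{|\partial_t q_{\eps,ij}|}{|q_{\eps,ij}|},
\]
using $|q_{\eps,ij}|\le\sqrt{q_{\eps,ii}q_{\eps,jj}}$ and $(Q_\eps V,V)\ge c_0\sum_h q_{\eps,hh}|V_h|^2$. You therefore need Lemma \ref{lem_par} applied to \emph{each} (suitably $0$-homogenised) entry $q_{\eps,ij}$, not just the diagonal ones; the partition $(\tau_h(\xi))$ must be common to all these analytic functions, which is still uniformly finite by the same local boundedness and compactness argument.
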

\begin{proof}
By the finite propagation speed for hyperbolic equations
it is not restrictive to assume that the initial data $g_l$ are compactly supported. If the coefficients $a_j$ are analytic in $t$ on $[0,T]$ then by construction the entries of the quasi-symmetriser $Q_\eps^{(m)}$ are analytic as well. In particular, by Proposition \ref{prop_qs}
\[
q_{\eps,ij}(t,\xi)=q_{0,ij}(t,\xi)+\eps^2 q_{1,ij}(t,\xi)+\cdots+\eps^{2(m-1)}q_{m-1,ij}(t,\xi).
\]
We use the partition of the interval $[0,T]$ in Lemma \ref{lem_par} (applied to any $q_{\eps,ij}(t,\xi)$ or more precisely to any $\wt{q}_{\eps,ij}(t,\xi)=q_{\eps,ij}(t,\xi)\lara{\xi}/|\xi|$, homogeneous function of order $0$ in $\xi$ having the same zeros in $t$ of $q_{\eps,ij}(t,\xi)$). Considering the first interval $[0,\tau_1]$ ($\tau_1=\tau_1(\xi)$) we define
\[
E_\eps(t,\xi)=\begin{cases}
|V(t,\xi)|^2 & \text{for $t\in [0,\eps]\cup[\tau_1-\eps,\tau_1]$},\\
(Q_\eps(t,\xi)V(t,\xi),V(t,\xi)) & \text{for $t\in [\eps,\tau_1-\eps]$}.
\end{cases}
\]
as in \cite[p.567]{KS}. Hence
\begin{multline*}
\partial_t E_\eps(t,\xi)\le |\partial_t E_\eps(t,\xi)|\le |((A_1-A_1^\ast)V,V)|+|((B-B^\ast)V,V)|\\
\le \big(2\sup_{t\in[0,T]}\Vert A_1(t,\xi)\Vert +2\sup_{t\in[0,T]}\Vert B(t,\xi)\Vert\big)E_\eps(t,\xi)
\end{multline*}
on $[0,\eps]\cup[\tau_1-\eps,\tau_1]$. It follows by the Gronwall inequaity
that there exists a constant $\alpha>0$ such that
\beq
\label{estA}
E_\eps(t,\xi)\le
\begin{cases}
\esp^{2\alpha\eps\lara{\xi}}E_\eps(0,\xi) &\text{for $t\in[0,\eps]$},\\
\esp^{2\alpha\eps\lara{\xi}}E_\eps(\tau_1-\eps,\xi) &\text{for $t\in[\tau_1-\eps,\tau_1]$}.
\end{cases}
\eeq
On the interval $[\eps,\tau_1-\eps]$ we proceed as in the proof for the
Gevrey well-posedness under the conditions \eqref{LB} on the lower order terms for $|\xi|\ge R$. We have
\beq
\label{est_a}
\partial_t E_\eps(t,\xi)\le \biggl(\int_{\eps}^{\tau_1-\eps}\frac{|(\partial_tQ_\eps V,V)|}{(Q_\eps(t,\xi) V(t,\xi), V(t,\xi))}\, dt+C_2\eps\lara{\xi}+C_3\biggr)E_\eps(t,\xi).
\eeq
Since the family $Q_\eps(\lambda)$ is nearly diagonal when the roots $\lambda_j$ satisfy the condition \eqref{LC} we have that $Q_\eps\ge c_0\text{diag}\,Q_\eps$, i.e.,
\[
(Q_\eps(t,\xi)V,V)\ge c_0\sum_{h=1}^m q_{\eps,hh}(t,\xi)|V_h|^2.
\]
This fact combined with the inequality
\[
|q_{\eps,ij}||V_i||V_j|\le\sqrt{q_{\eps,ii}q_{\eps,jj}}|V_i||V_j|\le\sum_{h=1}^m q_{\eps,hh}|V_h|^2
\]
and Lemma \ref{lem_par} yields
\begin{multline*}
\int_{\eps}^{\tau_1-\eps}\frac{|(\partial_tQ_\eps V,V)|}{(Q_\eps(t,\xi) V(t,\xi), V(t,\xi))}\, dt\le c_0^{-1}\int_{\eps}^{\tau_1-\eps}\sum_{i,j=1}^m \frac{|\partial_t q_{\eps,ij}(t,\xi)|}{|q_{\eps,ij}(t,\xi)|}\, dt\\
\le C_1\int_\eps^{\tau_1-\eps}\biggl(\frac{1}{t}+\frac{1}{\tau_1-t}\biggr)\, dt =2C_1\log\frac{\tau_1-\eps}{\eps}\le 2C_1\log\frac{T}{\eps},
\end{multline*}
for some constant $C_1$ independent of $t$ and $\xi\neq 0$. Going back to estimate \eqref{est_a} we obtain
\[
\partial_t E_\eps(t,\xi)\le (2C_1\log\frac{T}{\eps}+C_2\eps\lara{\xi}+C_3)E_\eps(t,\xi)
\]
for $t\in[\eps,\tau_1-\eps]$ and $|\xi|\ge R$. This implies, by Gronwall's lemma, that
\beq
\label{estB}
E_\eps(t,\xi)\le C_TE_\eps(\eps,\xi)\esp^{C_T\log(1/\eps)+C_T\eps\lara{\xi}},
\eeq
on $[\eps,\tau_1-\eps]$. Finally, putting together \eqref{estA} with \eqref{estB} we conclude that there exists a constant $c>0$ such that
\[
E_\eps(t,\xi)\le cE_\eps(0,\xi)\esp^{c(\log(1/\eps)+\eps\lara{\xi})}
\]
for all $t\in[0,\tau_1]$ and $|\xi|\ge R$. Hence by applying \eqref{p2} we have
\[
|V(t,\xi)|\le c\eps^{-(m-1)}\esp^{C_T(\log(1/\eps)+\eps\lara{\xi})}|V(0,\xi)|
\]
on $[0,\tau_1]$. An iteration of the same technique on the other subintervals of $[0,T]$ leads to
\[
|V(t,\xi)|\le c\eps^{-N(\xi)(m-1)}\esp^{N(\xi)C_T(\log(1/\eps)+\eps\lara{\xi})}|V(0,\xi)|
\]
on $[0,T]$ for $|\xi|\ge R$. Now, setting $\eps=\lara{\xi}^{-1}$ we get
\[
|V(t,\xi)|\le c\lara{\xi}^{N(\xi)(m-1)}\esp^{N(\xi)C_T}\lara{\xi}^{N(\xi)C_T}.
\]
Remembering that from Lemma \ref{lem_par} the function $N(\xi)$ is bounded in $\xi$ we conclude that there exist some $\kappa\in\N$ and $C>0$ such that
\beq
\label{PW}
|V(t,\xi)|\le C\lara{\xi}^\kappa|V(0,\xi)|
\eeq
on $[0,T]$ for all $|\xi|\ge R$. It is clear that the estimate \eqref{PW} implies $C^\infty$ and distributional well-posedness of the Cauchy problem \eqref{CP}.
\end{proof}
Finally, given the energy estimates established above the proof of
Theorem \ref{theo_GR2ia} is simple:
\begin{proof}[Proof of Theorem \ref{theo_GR2ia}.]
We observe that the estimates \eqref{estv1} and \eqref{PW} imply that
$V(t,\xi)$ is bounded in $\xi$ if the lower order terms
$A(\cdot,\xi)$ are bounded on $[0,T]$. Coming back to the solution $u$
of \eqref{CP} and the definition of $V$ we get that
the solution $u(t,x)$ is in the class ${C}^{m-1}([0,T])$ with respect to $t$.
Finally, from the equality
$D^m_t u=-\sum_{j=0}^{m-1} A_{m-j}(t,D_x)D_t^j u$ we see that the
right hand side is bounded in $t$, implying that
$u(t,x)$ is in $W^{\infty,m}([0,T])$ with respect to $t$.
\end{proof}
We conclude the paper with the following remark on how the results change
if we assume less than the Levi conditions \eqref{EQ:lot}.
We thank T. Kinoshita for drawing our attention to this question.
\begin{remark}
\label{rem_final}
Note that the matrix $B$ of the lower order terms in \eqref{system_new}
can be written as
\[
B(t,\xi)=\sum_{l=0}^{m-1}B_{-l}(t,\xi),
\]
with
\[
B_{-l}=\left(
    \begin{array}{ccccc}
      0 & 0 & 0 & \dots & 0\\
      0 & 0 & 0& \dots & 0 \\
      \dots & \dots & \dots & \dots & 0 \\
      B_{-l,1} & B_{-l,2}& \dots & \dots & B_{-l,m}\\
    \end{array}
  \right)
\]
and
\[
B_{-l,j}(t,\xi)=\begin{cases}
-\sum_{|\gamma|=m-j-l} a_{m-j+1,\gamma}(t)\xi^\gamma\lara{\xi}^{j-m}, & \text{ for $j\le m-l$}\\
0, & \text{otherwise}
\end{cases}
\]
for $j=1,...,m$. We easily see that the matrix $B_{-l}$ has entries of order $-l$ and the last $l$ entries in the bottom row are equal to $0$. Making use of this decomposition of $B$ we can write $((Q_0^{(m)}B-B^\ast Q_0^{(m)})V,V)$ as
\beq
\label{decomp}
\sum_{l=0}^{m-1}((Q_0^{(m)}B_{-l}-B_{-l}^\ast Q_0^{(m)})V,V).
\eeq
\begin{trivlist}
\item[(i)] Let $0\le h\le m-2$. Let us assume the Levi conditions \eqref{EQ:lot} in the form \eqref{LB} only on the $B_{-l}$-matrices up to level $h$, i.e.,
instead of \eqref{EQ:lot} assume only that
\beq
\label{relaxLC}
\biggl|\sum_{l=0}^h B_{-l,j}\biggr|^2\prec \sum_{i=1}^m |\sigma_{m-j}^{(m-1)}(\pi_i\lambda)|^2,
\eeq
for $j=1,...,m$. In other words, we impose Levi conditions only on the coefficients of the equation corresponding to the matrices $B_{-l}$ up to $l=h$, leaving free the remaining lower order coefficients. Under these assumptions and the bound \eqref{p2} from below  for the quasi-symmetriser we obtain for \eqref{decomp} the estimate
\begin{multline*}
\biggl|\sum_{l=0}^{m-1}((Q_0^{(m)}B_{-l}-B_{-l}^\ast Q_0^{(m)})V,V)\biggr|\\
\le\biggl|\sum_{l=0}^{h}((Q_0^{(m)}B_{-l}-B_{-l}^\ast Q_0^{(m)})V,V)\biggr|+\biggl|\sum_{l=h+1}^{m-1}((Q_0^{(m)}B_{-l}-B_{-l}^\ast Q_0^{(m)})V,V)\biggr|\\
\le C_3E_\eps +C_4\lara{\xi}^{-h-1}\eps^{-2(m-1)}E_\eps.
\end{multline*}
This leads to the energy estimate
\[
\partial_t E_\eps(t,\xi)\le (C_1\eps^{-2(m-1)/k}+C_2\eps\lara{\xi}+C_3+C_4\lara{\xi}^{-h-1}\eps^{-2(m-1)})E_\eps(t,\xi).
\]
The Gevrey well-posedness result of Theorem \ref{theo_GRi} will still hold true under the relaxed Levi condition \eqref{relaxLC}, e.g.,
if for $\eps^{-2(m-1)/k}=\eps\lara{\xi}$ one has
\[
\lara{\xi}^{-h-1}\eps^{-2(m-1)}\le \lara{\xi}^{\frac{1}{\sigma}},
\]
with $\sigma=1+k/[2(m-1)]$, that is if
\beq
\label{h}
h+1\ge\frac{2(m-1)(k-1)}{k+2(m-1)}.
\eeq
In other words, for any fixed $k\in\N$ by involving sufficiently enough matrices $B_{-h}$ in the Levi condition \eqref{relaxLC} (how many depend on the equation order $m$ and the regularity $k$ of the coefficients) one can still obtain $G^s$ well-posedness for
\[
1\le s<1+\frac{k}{2(m-1)},
\]
but not the well-posedness in any Gevrey space even if $k$ increases to infinity. This is due to the fact that condition \eqref{h} implies $\frac{2(m-1)(k-1)}{k+2(m-1)}\le m-1$ and, therefore, gives the restriction $k\le 2m$.
\item[(ii)] Assume now that the equation coefficients are smooth. This implies that for any $a>0$ we can take $k$ large enough such that $\eps^{-2(m-1)/k}\le\lara{\xi}^a$. Hence, $\eps^{-2(m-1)/k}\le\lara{\xi}^{-h-1}\eps^{-2(m-1)}$ with $h=0$. Setting then $\eps\lara{\xi}=\lara{\xi}^{-1}\eps^{-2(m-1)}$ we get that under the Levi condition \eqref{relaxLC} with $h=0$ the Cauchy problem \eqref{CP} is well-posed in $G^s$ with
\[
1\le s<1+\frac{2}{2m-3}.
\]
In terms of Gevrey order this result is worse than the one stated in Theorem \ref{theo_GRi} but it is obtained with Levi conditions only on the coefficients appearing in the matrix $B_0$.
We note that it is better than the Bronstein's result due to the extra assumption
\eqref{LC} and the Levi condition \eqref{relaxLC} with $h=0.$
\end{trivlist}
\end{remark}

\bibliographystyle{abbrv}
\newcommand{\SortNoop}[1]{}

\end{document}